\newtheorem{assumption}[theorem]{Assumption}
\newcommand{\x}[1]{\textrm{#1}}
\newcommand{\su}[1]{\subsection{#1}}
\newcommand{\im}{\textrm{im}}
\newcommand{\proj}{\textrm{proj}}
\newcommand{\rank}{\textrm{rank}}
\newcommand{\bp}{\begin{pmatrix}}
\newcommand{\ep}{\end{pmatrix}}
\newcommand{\bi}{\begin{itemize}}
\newcommand{\ei}{\end{itemize}}
\newcommand{\bc}{\begin{cases}}
\newcommand{\ec}{\end{cases}}
\newcommand{\ba}{\[\begin{aligned}}
\newcommand{\ea}{\end{aligned}\]}
\newcommand{\be}{\begin{enumerate}}
\newcommand{\ee}{\end{enumerate}}
\newcommand{\eqn}[2]{\begin{equation}\label{#2} #1\end{equation}}
\newcommand{\eq}[1]{\begin{align*}#1\end{align*}}
\newcommand{\eqq}[2]{\begin{align}\label{#2} \begin{split} #1 \end{split} \end{align}}
\newcommand{\sbq}{\subseteq}
\newcommand{\spq}{\supseteq}
\newcommand{\half}{\frac{1}{2}}
\newcommand{\q}{\quad}
\newcommand{\qq}{\qquad}
\newcommand{\R}{\mathds{R}}
\newcommand{\C}{\mathds{C}}
\newcommand{\N}{\mathds{N}}
\newcommand{\one}{\mathds{1}}
\newcommand{\I}{\mathds{I}}
\newcommand{\V}{\vspace{3mm}}
\newcommand{\bs}{\backslash}
\newcommand{\issa}{\\ = &}
\newcommand{\isen}{=&}
\newcommand{\ub}[2]{\underbrace{#1}_{#2}}
\newcommand{\haak}[1]{\left( #1 \right)}
\newcommand{\haakk}[1]{\left[ #1 \right]}
\newcommand{\acco}[1]{\left\{ #1 \right\}}
\newcommand{\lar}{\left( \begin{array}}
\newcommand{\rar}{\end{array} \right)}
\newcommand{\mc}[1]{\mathcal{#1}}
\newcommand{\md}[1]{\mathds{#1}}
\newcommand{\fone}[1]{\frac{1}{#1}}
\newlength{\dhatheight}
\newcommand{\doublehat}[1]{%
    \settoheight{\dhatheight}{\ensuremath{\hat{#1}}}%
    \addtolength{\dhatheight}{-0.35ex}%
    \hat{\vphantom{\rule{1pt}{\dhatheight}}%
    \smash{\hat{#1}}}}
\newcommand{\TheTitle}{TOPOLOGICAL AND GRAPH-COLORING CONDITIONS ON THE PARAMETER-INDEPENDENT STABILITY OF SECOND-ORDER NETWORKED SYSTEMS}
\newcommand{\TheAuthors}{F. J. Koerts, M. B{\"u}rger, A. J. van der Schaft and C. De Persis}
\newcommand{\TheTitleShort}{Networked Systems Containing Damped and Undamped Nodes}
\headers{\TheTitleShort}{\TheAuthors}
\title{{\TheTitle}\thanks{This work contains substantial overlap with a previous paper by the same authors \cite{acc}.}}
\author{
  Filip J. Koerts \thanks{Johann Bernoulli Institute for Mathematics and Computer Science, University of Groningen, The Netherlands
    (\email{f.j.koerts@rug.nl}).}
  \and
  Mathias B{\"u}rger  \thanks{Bosch Center for Artificial Intelligence, Robert Bosch GmbH (\email{mathias.buerger@de.bosch.com}).}
  \and
  Arjan J. van der Schaft \thanks{Johann Bernoulli Institute for Mathematics and Computer Science, University of Groningen, The Netherlands (\email{a.j.van.der.schaft@rug.nl}).}
    \and
    Claudio De Persis \thanks{Engineering and Technology Institute, University of Groningen, The Netherlands (\email{c.de.persis@rug.nl}).}
}
\begin{document}

\maketitle

\begin{abstract}
In this paper, we study parameter-independent stability in qualitatively heterogeneous passive networked systems containing damped and undamped nodes. Given the graph topology and a set of damped nodes, we ask if output consensus is achieved for all system parameter values. For given parameter values, an eigenspace analysis is used to determine output consensus. The extension to parameter-independent stability is characterized by a coloring problem, named the richly balanced coloring (RBC) problem. The RBC problem asks if all nodes of the graph can be colored red, blue and black in such a way that (i) every damped node is black, (ii) every black node has blue neighbors if and only if it has red neighbors, and (iii) not all nodes in the graph are black. Such a colored graph is referred to as a richly balanced colored graph. Parameter-independent stability is guaranteed if there does not exist a richly balanced coloring. The RBC problem is shown to cover another well-known graph coloring scheme known as zero forcing sets. That is, if the damped nodes form a zero forcing set in the graph, then a richly balanced coloring does not exist and thus, parameter-independent stability is guaranteed. However, the full equivalence of zero forcing sets and parameter-independent stability holds only true for tree graphs. For more general graphs with few fundamental cycles an algorithm, named chord node coloring, is proposed that significantly outperforms a brute-force search for solving the NP-complete RBC problem. 
\end{abstract}

\begin{keywords} 
heterogeneous networks; undamped nodes; consensus dynamics; parameter-independent stability; zero forcing; graph coloring
\end{keywords}

\begin{AMS}
93D20, 91B69, 94C15, 05C15, 05C85 
\end{AMS}


\section{Introduction}

This paper deals with the consensus or synchronization problem, where heterogeneous dynamical systems are coupled in such a way that they evolve asymptotically in an identical manner, see e.g. \cite{seyboth}, \cite{grip}. Synchronization is a fundamental stability-like property in numerous applications such as power systems, where frequencies of the power generators should be synchronized   \cite{bergenhill}, or platooning vehicles, where the velocities of the vehicles should be synchronized, see e.g. \cite{olfati}, \cite{traffic}.

We study in this paper a basic class of passive networks, namely linear mass-spring-damper networks with constant external forces. While this model is simple, it captures many of the relevant properties of networks of passive systems as studied in \cite{c1}, \cite{arcak}, \cite{port2}. We use output strict passivity of the nodes \cite{arcak}, \cite{port2}, and passivity of the couplings to ensure that consensus is achieved in the case that all nodes are damped.  However, in the presence of undamped nodes,  passivity is not output strictly anymore and consensus is not necessarily achieved. 

Mass-spring-dampers systems are systems with inertia, which involves second-order dynamics. Depending on the location of the damped nodes in the network, the system can show either convergence or unwanted oscillatory behavior for some initial conditions. Such oscillations do not appear in first-order models, and thus require a special analysis. In particular, the standard methods for the convergence analysis of passive networks are not applicable to our class of second-order systems.


Synchronization problems are particularly challenging if the individual systems are not identical but heterogeneous. There has been tremendous research on synchronization of heterogeneous systems (e.g., using dynamic coupling controllers) \cite{c2}, \cite{c1}, \cite{murguia}. In this paper, the heterogeneity is qualitative, as we consider mass-spring-type networks with typically many undamped and few damped nodes. This type of models might be applied to networks that contain a minority of nodes with damping constants being considerably higher than the damping constants of other nodes, in which case a natural approach would be to approximate the damping values below some threshold value by zero. 

Qualitatively heterogeneous systems can be also found in e.g. leader-follower systems where one is studying controllability properties. Here, the aim is to control the systems, where the input is applied to a subset of nodes called the leaders. The main result of \cite{acc} relates output consensus to observability of a Kron reduced system and using duality to controllability. When considering the whole network as one system, the controllability depends heavily on the location of the leaders in the network. In \cite{rahmani}, the controllability of leader-follower consensus networks has been connected to the symmetry of the graph with respect to the leaders. Similarly, the research direction of pinning control investigates the question, where to place a limited number of controllers in a network to achieve synchronization (see \cite{chen}, \cite{xiang} for a survey).

In the research field of strong structural controllability, one looks at controllability of a class of systems rather than a single system. This is often useful, as in many large-scale networked systems, the system parameter values are (partially) unknown, see e.g. \cite{ssc1}, \cite{ssc2}. In this paper, we assume that the system parameter values are completely unknown up to some feasibility constraints. In order to guarantee consensus, the system needs to satisfy stricter conditions than those described in \cite{acc} for the case of known values. {\color{blue} These stricter conditions are fundamentally different as they only involve the graph structure and the location of the damped nodes in the network.}

The contribution of the current paper includes a full characterization of these topological conditions, thereby giving an answer to the decision problem whether a system is parameter-independent globally asymptotically stable (PI-GAS). First, we show that this decision problem is equivalent to a graph coloring problem that we refer to as the richly balanced coloring (RBC) problem. Secondly, as this coloring problem turns out to be NP-complete, we discuss two graph coloring algorithms and show how they relate to the solution of the RBC problem.  

Inspired by previous work on zero forcing sets (see e.g. \cite{zfs1}, \cite{zfs2}, \cite{zfs3}), we study the zero forcing algorithm as an approximation algorithm for the RBC problem. In \cite{ssc2} and \cite{zfs4}, the zero forcing property was derived as a sufficient condition for strong structural target controllability for first-order systems. While these results do not apply to our second-order dynamics, we present a similar result, showing that the zero forcing property is in general also sufficient for parameter-independent stability. However, the property turns out not to be a necessary condition for PI-GAS, except for tree graphs. 

Motivated by this lack of necessity, we propose a second coloring algorithm, namely the novel chord node coloring (CNC) algorithm. This algorithm is proven to find the true solution to the RBC problem, and thus allows to identify all PI-GAS network topologies. It performs for certain networks, in particular for large networks with a limited number of cycles, significantly better than a brute-force search. To the best of our knowledge, the RBC problem and the CNC algorithm have not been presented in the literature before.

The paper is organized as follows. The dynamical network model and the graph formalism is introduced in \cref{sec2}. The system characteristics such as the network equilibrium and a shifted model are covered in \cref{sec3}. The convergence analysis is performed in \cref{sec4}, where first a Lyapunov analysis is presented, followed by a characterization of a certain invariant subspace, leading to a precise characterization of the convergence condition. This result is used in \cref{sec5} to show that the parameter-independent stability problem is equivalent to a graph coloring problem. Furthermore, the sufficient condition for parameter-independent stability based on zero forcing sets is  given, as well as the novel chord node coloring problem. The paper \cite{acc} contains a preliminary version of parts of Sections 2-4. 

{\bf Notation} For $v \in \R^n$ and $w \in \R^m$, by $col(v,w)$ we denote the vector $(v^T \; w^T)^T$. {\color{blue} The image and the kernel of a matrix $A$ is given by $\im(A)$ and $\ker(A)$, respectively.} The block diagonal matrix whose diagonal blocks are $A$ and $B$ is given by $diag(A,B)$. By $A \succ 0$ and $A \succcurlyeq 0$ we denote positive definiteness and semi-positive definiteness of $A$, respectively. Moreover, $\sigma(A)$ is the spectrum of $A$, i.e. the set of eigenvalues of $A$.  {\color{blue} For two subspaces $A, B \sbq \R^n$, the subspace sum $A \oplus B$ is the span of the union $A \cup B$.}The quotient space of the vector space $\R^n$ by a subspace $S$, i.e. the set of affine subspaces in $\R^n$ parallel to $S$, is denoted $\R^n / S$. {\color{blue}The Kronecker product of $A$ and $B$ is denoted by $A \otimes B$} and $A^\dagger$ denotes the Moore-Penrose pseudoinverse of $A$. $\one$ is the vector whose entries are all 1. The identity matrix of size $n$ is denoted by $I_n$. {\color{blue} For a graph $\mc G=(V,E)$, the (oriented) incidence matrix $B= B(\mc G) \in \R^{|V| \times |E|}$ shows the connection of the edges and vertices in such a way that every column contains exactly one 1 and one -1 in the rows corresponding to its endpoints, while all other entries are zero.} The observability matrix of the system $(C,A)$ is denoted by $Obs(C,A)$.

\section{Preliminaries}\label{sec2}

We consider a mass-spring-damper system defined on an undirected and connected graph $\mc G=(V,E)$ with $n$ nodes and $m$ edges, and incidence matrix $B$. On each node $i \in V$, a mass $\Sigma_i$ is placed which is modeled as
\eqq{ 
\dot p_i = -R_iy_i + u_i + v_i, \qquad y_i = M_i^{-1} p_i
}{nodes}
where $p_i \in \R^{r}$ and  $y_i \in \R^{r}$ are the momentum (state) and velocity (output) of the masses, respectively. Further, we have the damping matrix $R_i$, inertia matrix $M_i$, coupling force (coupling input) $u_i$ and a constant external force (external input) $v_i \in \R^r$. On each edge $k=(i,j) \in E$, a spring $\Gamma_k$ of dimension $r$ is placed with elongation (state) $q_k \in \R^{r}$, output force $f_k \in \R^{r}$, which is modeled as:
\eqq{ 
\dot q_k = {\color{blue}{\zeta_k}}, \qquad f_k \isen W_kq_k
}{edges}
Here, $W_k$ is the edge weight matrix of edge $k$ and {\color{blue} $\zeta_k$ is the input force}. Variables without subscript denote the corresponding stacked variables of the masses and springs. The coupling is established through 
\eqq{
u = -(B \otimes I_{r}) f, \qquad {\color{blue}\zeta} = (B^T \otimes I_{r}) y
}{coupling}
In the sequel, we will use the abbreviated notation $\mathds{B}:= B \otimes I_{r}$.  

\begin{assumption}\label{denass} The damping matrix, inertia matrix and edge weight matrix satisfy $R_i \succcurlyeq 0$, $M_i \succ 0$, $W_i \succ 0$. 
\end{assumption}

\begin{definition} \textit{A node $i \in V$ is said to be damped if $R_i \succ 0$. A node is undamped if $R_i =0$, while it is partially undamped if $R_i$ is nonzero and singular. }
\end{definition} 

For any partially undamped node, there exist directions for which it does not experience damping (namely, every direction in the kernel of $R_i$) and directions for which it does (any other direction).  

\begin{assumption} The set $V$ of nodes is partitioned into a set $V_d$ of damped nodes with cardinality $n_d \geq 1$ and a set $V_u$ of (partially) undamped nodes with cardinality $n_u \geq 0$. \footnote{The case with $n_u=0$ leads to trivial results, but is required for the analysis in Chapter 5.}
\end{assumption}

{\bf Remark }  
Mass-spring-damper systems are used here as a leading example. Other examples such as hydraulic systems can be modelled similarly.


\subsection{Closed-loop system}

Let $p=col(p_1, \dots, p_n)$, $q=col(q_1, \dots, q_m)$ be the stacked state vectors and similarly for the other variables. Taking \cref{nodes}, \cref{edges} and \cref{coupling} together, we obtain the closed-loop system, denoted by $\Sigma \times \Gamma$ and whose state and output is denoted by $z:=col(p,q) \in \R^{r(n+m)}$ and $y \in \R^{rn}$, respectively. Its state-space representation reads as $\dot z=Az+Gv$, $y=M^{-1}p$, where

\eqq{
\ub{\bp \dot p \\ \dot q \ep}{\dot z} \isen \ub{\bp -RM^{-1} & -\md BW \\ \md B^TM^{-1} & 0 \ep}{A} \ub{\bp p \\ q \ep}{z} + \ub{\bp I \\ 0 \ep}{G} v \\
y \isen \bp M^{-1} & 0 \ep \bp p \\ q \ep 
}{y}

The system parameters are:
\bi 
\item $M:=diag(M_1, \dots, M_n) \succ 0$, a block diagonal matrix containing inertia matrices of the individual nodes.  
\item $R:=diag(R_1, \dots, R_n) \succcurlyeq 0$, a block diagonal matrix with damping matrices of the individual nodes. 
\item $W:=diag(W_1, \dots W_m) \succ 0$, the block diagonal matrix with spring constants (edge weights). 
\item $v$, a constant external input 
\ei 
\medskip

{\bf Remark } The system $\Sigma \times \Gamma$ can be written in a port-Hamiltonian representation, where $H(p,q)=\half p^TMp + \half q^TWq$ is used as Hamiltonian function. This gives $\dot H=v^Ty-y^TRy \leq v^Ty$, which shows that $\Sigma \times \Gamma$ is passive, but not output strictly passive as $R$ is singular. Hence, this does not give us the wanted convergence results and we need to invoke LaSalle's Theorem (section 3).

\subsection{Second-order dynamics}

Since $\mc G$ is connected, $\x{rank}(B)=n-1$. Furthermore, $\ker(B^T)=\im(\one)$, where $\one$ is the stacked vector of all ones. {\color{blue} Also, $\ker(\mathds{B}^T)=\im(\I_r)$}, where $\I_r:= \one \otimes I_r$. In fact, $\md B$ represents a graph consisting of $r$ connected components that are copies of $\mc G$. 

A fundamental cycle matrix $C$ of $\mc G$ is a matrix of full column rank that satisfies $\ker(B)=\im(C)$, see e.g. \cite{graphmatrices}\footnote{The fundamental cycle matrix in \cite{graphmatrices} is the transposed of the fundamental cycle matrix used in this paper.}. The full column rank matrix $\mathds{C}:= C \otimes I_r$ satisfies $\ker(\mathds{B})=\im(\mathds{C})$. 

Note that since $\dot q \in \im(\mathds{B}^T)$, the projection of $q$ onto $\im(\mathds{B}^T)^\perp=\ker(\mathds B)=\im(\mathds C)$ can be written as $\mathds C r$ for some $r \in \R^{r(m-n+1)}$ and is such that $q(t) \in \im(\mathds{B}^T) + \mathds{C} r$ for all $t \geq 0$. By integrating the output $y$, we obtain potentials or positions $s(t):= \int_0^t y(\tau) d\tau+s_0$, where $s_0$ satisfies $q(0)=\mathds{B}^Ts_0+W^{-1}\mathds{C} r$. This decomposition is possible and unique. All terms in the equation $\dot p=-Ry -\mathds{B}Wq +v$ can be written in terms of (derivatives) of $s$ and the result is a second order equation: 
\eqq{
M \ddot s = -R \dot s - \mathds{B}W\mathds{B}^T s + v
}{sor}
We define $\md L:= \md B W \md B^T$ to be the total Laplacian matrix. Also, $\ker(\md L)=\ker(\md B^T)=\im(\I_r)$. The graph $\mc G_{\md L}$ associated with $\md L$ might be disconnected. This is the case if e.g. $r>1$ and all $W_i$'s are diagonal. Then $\mc G_{\md L}$ consists of $r$ connected components that are copies of $\mc G$. If the matrices of $W_i$ are full, then $\mc G_{\md L}$ is the strong product of $\mc G$ and the complete graph with $r$ nodes\footnote{For more on strong products of graphs, see \cite{sabidussi}. In Definition 1.1(2) of this paper, take $B=\{\mc G, \mc G_r\}$ and $A=\emptyset$, with $\mc G_r$ being the complete graph with $r$ nodes.}. Off-diagonal entries of $\md L$ can be positive, which occurs if and only if there are $W_k$'s with negative off-diagional entries. This does not affect the stability, since $W$ is positive-semidefinite (see section 4). 

\subsection{Decomposition of $\md B$ and $\md L$}

The partitioning $\{V_d , V_u\}$ of $V$ also induces a partitioning of the edges into the set $E^d$ of edges between damped nodes, the set $E^u$ of edges between undamped nodes and the set $E^i$ of interconnecting edges between a damped and an undamped node. We obtain $\mc G=(V_d \cup V_u, E^d \cup E^i \cup E^u)$ with partitioned total incidence matrix
\eq{
\md B= \bp \md B_d \\ \md B_u \ep =  \bp \md B_d^d & \md B^i_d & 0 \\ 0 & \md B^i_u & \md B^u_u \ep 
}
Let the edge weight matrix $W$ and the total Laplacian matrix $\md L$ be correspondingly decomposed. Now, decompose \cref{sor} into blocks associated with the damped nodes, with subscript $d$, and (partially) undamped nodes, with subscript $u$, as follows 
\eqq{
 \underbrace{\bp M_d & 0 \\ 0 & M_u \ep}_{M} \bp \ddot s_d \\ \ddot s_u \ep =& - \underbrace{\bp R_d & 0 \\ 0 & R_u \ep}_{R} \bp \dot s_d \\ \dot s_u \ep \\ & \hspace{-10mm} - \underbrace{\bp \md L^d_d+\md L_d^i & \md L^i_i \\ (\md L^i_i)^T & \md L^u_u+\md L_u^i \ep}_{L} \bp s_d \\ s_u \ep + \bp v_d \\ v_u \ep
}{el}
$\md L^d_d=\md B_d^dW^d(\md B_d^d)^T$ and $\md L^u_u=\md B_u^uW^u(\md B_u^u)^T$ are the Laplacian matrices corresponding to the subgraphs $\mc G_d:=(V_d,E^d)$ with incidence matrix $\md B_d^d$ and $\mc G_u:=(V_u,E^u)$ with incidence matrix $\md B_u^u$, respectively. The Laplacian matrix 
$$ \md L^i := \bp \md L_d^i & \md L_i^i \\ (\md L_i^i)^T & \md L_u^i \ep$$ 
corresponds to the subgraph $\mc G_i:=(V,E^i)$. $\md L_i^i=\md B_d^iW^i(\md B_u^i)^T$ contains the edge weight matrices of the interconnecting edges. Finally, $\md L_d^i=\md B_d^iW^i(\md B_d^i)^T$ and $\md L_u^i=\md B_u^iW^i(\md B_u^i)^T$ are positive semi-definite block diagonal matrices since by definition of $\mc G_i$, there are no edges between two damped or two undamped nodes in $\mc G_i$.


\subsection{Stability analysis}
Let $\sigma(A)$ denote the spectrum of the matrix $A$. In the stability analysis, we refer to Barbalat's and Hautus Lemma, which are stated below: 
\begin{lemma}[Barbalat's Lemma  \cite{vukic}]\label{barbalat} Let $f(t)$ be a function of time only. If $f(t)$ has a finite limit as $t \to \infty$ and moreover, $\dot f$ is uniformly continuous or $\ddot f$ is bounded, then $\dot f(t) \to 0$ as $t \to \infty$. 
\end{lemma}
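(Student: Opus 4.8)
The plan is to reduce the two stated sufficient conditions to a single one and then argue by contradiction. First I would observe that the hypothesis ``$\ddot f$ is bounded'' is merely a convenient special case of ``$\dot f$ is uniformly continuous'': if $|\ddot f(t)| \leq M$ for all $t$, then by the mean value theorem $|\dot f(t) - \dot f(t')| \leq M|t - t'|$, so $\dot f$ is Lipschitz and hence uniformly continuous. It therefore suffices to prove the claim assuming that $f$ has a finite limit $L$ as $t \to \infty$ and that $\dot f$ is uniformly continuous.

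For the main argument I would suppose, for contradiction, that $\dot f(t) \not\to 0$. Then there exist $\varepsilon > 0$ and a sequence $t_n \to \infty$ with $|\dot f(t_n)| \geq \varepsilon$ for every $n$. Uniform continuity of $\dot f$ supplies a $\delta > 0$, depending only on $\varepsilon$, such that $|\dot f(t) - \dot f(t')| < \varepsilon/2$ whenever $|t - t'| \leq \delta$. On each window $[t_n, t_n + \delta]$ the reverse triangle inequality then gives $|\dot f(\tau)| \geq |\dot f(t_n)| - \varepsilon/2 \geq \varepsilon/2$; since $\dot f$ is continuous and its modulus stays above $\varepsilon/2 > 0$, the intermediate value theorem forbids a sign change, so $\dot f$ keeps a constant sign throughout the window.

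The contradiction then comes from integrating. Because $\dot f$ has constant sign and magnitude at least $\varepsilon/2$ on $[t_n, t_n + \delta]$, the fundamental theorem of calculus yields
$$\left| f(t_n + \delta) - f(t_n) \right| = \int_{t_n}^{t_n + \delta} |\dot f(\tau)| \, d\tau \geq \frac{\varepsilon \delta}{2} > 0$$
for every $n$. On the other hand, since $f(t) \to L$ and both $t_n \to \infty$ and $t_n + \delta \to \infty$, the left-hand side tends to $|L - L| = 0$ as $n \to \infty$. This is the sought contradiction, so $\dot f(t) \to 0$ as $t \to \infty$.

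I expect the only delicate point to be the middle step: establishing that on each window $[t_n, t_n + \delta]$ the derivative stays uniformly bounded away from zero with a fixed sign. This is precisely where \emph{uniform} continuity (as opposed to mere continuity) is indispensable, since it produces a single $\delta$ valid for all $n$, so that the integral lower bound $\varepsilon\delta/2$ does not degrade as $t_n \to \infty$. The remaining steps are routine consequences of the definition of a limit and of the fundamental theorem of calculus.
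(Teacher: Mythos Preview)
Your proof is correct and is the standard argument for Barbalat's Lemma. Note, however, that the paper does not supply its own proof of this statement: it is stated as a cited result from \cite{vukic} and used as a black box in the proof of \cref{qook}. There is therefore nothing in the paper to compare your argument against; you have simply filled in a proof the authors chose to omit.
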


\begin{lemma}[Hautus Lemma \cite{hautus}]\label{hautus} Consider matrices $A \in \R^{n \times n}$ and $B \in \R^{m \times n}$. Then the following is equivalent: 
\bi
\item The pair $(A,B)$ is controllable
\item For all $\lambda \in \C$, it holds that $\rank \bp \lambda I- A & B \ep =n$. 
\item For all $\lambda \in \sigma(A)$, it holds that $\rank \bp \lambda I- A & B \ep =n$. 
\ei
\end{lemma}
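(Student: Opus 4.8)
The plan is to reduce the equivalence to the Kalman rank condition and then connect it to the eigenstructure of $A$ through left eigenvectors. I would take as the working characterization of controllability that $(A,B)$ is controllable if and only if the controllability matrix $\mc C := \bp B & AB & \cdots & A^{n-1}B \ep$ has full row rank $n$. Before touching the genuine content, I would dispose of the equivalence of the second and third conditions: for any $\lambda \notin \sigma(A)$ the block $\lambda I - A$ is already invertible, so $\rank \bp \lambda I - A & B \ep = n$ holds automatically. Hence a rank deficiency can occur only at $\lambda \in \sigma(A)$, and testing all $\lambda \in \C$ is the same as testing only the spectrum.

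For the implication (i) $\Rightarrow$ (ii) I would argue by contraposition. Suppose $\rank \bp \lambda_0 I - A & B \ep < n$ for some $\lambda_0$. Then the $n$ rows are linearly dependent, so there is a nonzero row vector $w^* \in \C^{1 \times n}$ with $w^*(\lambda_0 I - A)=0$ and $w^* B = 0$. The first relation reads $w^* A = \lambda_0 w^*$, i.e. $w^*$ is a left eigenvector of $A$; inductively $w^* A^k B = \lambda_0^k\, w^* B = 0$ for every $k \geq 0$, whence $w^* \mc C = 0$ and $\mc C$ cannot attain rank $n$. Thus $(A,B)$ is not controllable, which is the contrapositive of (i) $\Rightarrow$ (ii).

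For the reverse direction (ii) $\Rightarrow$ (i), again by contraposition, I would invoke the Kalman controllability decomposition: if $(A,B)$ is not controllable there is an invertible $T$ with
\[ T^{-1}AT = \bp A_{11} & A_{12} \\ 0 & A_{22} \ep, \qquad T^{-1}B = \bp B_1 \\ 0 \ep, \]
where the block $A_{22}$ is nontrivial and governs the uncontrollable part. Picking $\lambda_0 \in \sigma(A_{22})$ and a corresponding left eigenvector $v^*$ of $A_{22}$, the vector $w^* := \bp 0 & v^* \ep T^{-1}$ satisfies both $w^*(\lambda_0 I - A) = 0$ and $w^* B = 0$, which forces $\rank \bp \lambda_0 I - A & B \ep < n$ and violates (ii).

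The main obstacle is precisely this last direction: the annihilator condition $w^* A^k B = 0$ for all $k$ only tells me that the controllability matrix has a nontrivial left null space, whereas the Hautus form demands that the annihilating vector be an honest left \emph{eigenvector} of $A$. Producing that eigenvector is exactly what the Kalman decomposition supplies, the underlying reason being that the controllable subspace $\im(\mc C)$ is $A$-invariant, so $A$ induces a well-defined map on the quotient $\R^n / \im(\mc C)$, and over $\C$ this induced map has an eigenvalue by the fundamental theorem of algebra. Once the eigenvector is secured the remaining steps are routine linear algebra.
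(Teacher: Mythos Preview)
The paper does not prove this lemma at all: it is stated as a classical result with a citation to \cite{hautus} and is simply invoked later (in the proof of \cref{t1}) in its dual observability form. There is therefore no ``paper's own proof'' to compare against.

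Your argument is the standard one and is correct. The equivalence (ii)$\iff$(iii) is immediate since $\lambda I-A$ is invertible off the spectrum; (i)$\Rightarrow$(ii) via the left-annihilator $w^*$ is clean; and for $\neg$(i)$\Rightarrow\neg$(ii) you correctly identify that the nontrivial step is upgrading an arbitrary left annihilator of the controllability matrix to a left \emph{eigenvector}, which the Kalman decomposition (equivalently, the $A$-invariance of $\im(\mc C)$ and passage to the quotient) provides. One minor remark: the lemma as stated in the paper has $B\in\R^{m\times n}$, which makes the horizontal block $\bp \lambda I-A & B\ep$ dimensionally inconsistent; you have silently (and correctly) read it as $B\in\R^{n\times m}$, which is the intended controllability setting.
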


\subsection{Problem formulation}\label{secprobleem}

Suppose that we know the topology of the graph including the set $V_d$ of damped nodes, but we do not know the system parameter values, except that they are feasible, i.e. \cref{denass} is met. The graph topology and the set of damped nodes define a damping graph represented as $\mc G=(V, V_d, E)$, where $V_d \sbq V$. Output consensus is achieved if the output variable $y$ converges to a point in the synchronization manifold $\im\{\I_r\}$ as $t \to \infty$. In this sense, it is of interest to know whether a given damping graph gives rise to asymptotic output consensus for all feasible system parameter values and initial conditions. Thus, in this paper we address the following problem: 

\medskip

{\bf Problem} \textit{(Parameter-independent stability problem)} Under which conditions on the damping graph $\mc G=(V,V_d,E)$ does every output trajectory $y(t)$ of $\Sigma \times \Gamma$, i.e. system \cref{y}, converge to a point in $\im\{\I_r\}$ as $t \to \infty$ for all feasible system parameter values?


\medskip

In section 3 and 4 we give a graph theoretic answer to the question whether output consensus is guaranteed for a \textit{given} set of system parameter values, {\color{blue} which has been elaborated in \cite{acc}. This result is exploited in section 5 as a starting point to solve the parameter-independent stability problem. }



\section{System characteristics}\label{sec3}

In this section, we determine the equilibria and perform a shift so that the equilibrium is located at the origin. Note first that the affine subspaces of  $\mathds{R}^{rm} \backslash \im(\mathds{B}^T)$ are invariant under the dynamics of the controller state $q(t)$. We have the following characterization of $\mathds{B}$ and $\mathds{C}$: 

\begin{lemma}\label{bandc} $\im(\mathds{B}^T) \oplus  \im(W^{-1}\mathds{C}) = \mathds{R}^{rm}$ and $\im(\mathds{B}^T) \cap  \im(W^{-1}\mathds{C}) = \{0\}$. 
\end{lemma}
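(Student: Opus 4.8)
The plan is to prove the two claims in the order in which they are most naturally obtained: first establish that the intersection $\im(\mathds{B}^T) \cap \im(W^{-1}\mathds{C})$ is trivial, and then deduce the full direct-sum decomposition of $\R^{rm}$ by a dimension count. The triviality of the intersection is the substantive part; the decomposition then follows for free.

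For the intersection, I would take an arbitrary $x \in \im(\mathds{B}^T) \cap \im(W^{-1}\mathds{C})$ and write it in both forms, $x = \mathds{B}^T a$ and $x = W^{-1}\mathds{C}b$ for suitable vectors $a,b$. The key manoeuvre is to multiply the second expression by $W$, giving $Wx = \mathds{C}b \in \im(\mathds{C}) = \ker(\mathds{B})$, where I use the already-stated identity $\ker(\mathds{B}) = \im(\mathds{C})$. I then form the $W$-weighted quadratic form and substitute the two representations:
\[ x^T W x = (\mathds{B}^T a)^T \mathds{C} b = a^T (\mathds{B}\mathds{C}) b = 0, \]
since $\mathds{B}\mathds{C} = 0$ (again because $\im(\mathds{C}) \sbq \ker(\mathds{B})$). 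Because $W \succ 0$ by \cref{denass}, the vanishing of $x^T W x$ forces $x = 0$, which proves $\im(\mathds{B}^T) \cap \im(W^{-1}\mathds{C}) = \{0\}$.

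For the sum, I would rely purely on dimensions. Since $\mc G$ is connected, $\rank(B) = n-1$, so $\dim \im(\mathds{B}^T) = \rank(\mathds{B}) = r(n-1)$. Because $W^{-1}$ is invertible and $\mathds{C} = C \otimes I_r$ has full column rank, $\dim \im(W^{-1}\mathds{C}) = \rank(\mathds{C}) = r(m-n+1)$. These dimensions add to $rm$, and combined with the trivial intersection just established, the subspace sum is direct of dimension $rm$, hence equals $\R^{rm}$.

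The argument is short, and the only genuine subtlety — the step I would flag as the crux — is the use of the $W$-inner product in place of the ordinary one. It is precisely the factor $W$ hidden in $W^{-1}\mathds{C}$ that cancels against the inverse and turns $x$ into a vector of $\im(\mathds{C}) = \im(\mathds{B}^T)^\perp$, so that the would-be common vector is forced to be $W$-orthogonal to itself. Everything else is routine bookkeeping with ranks of Kronecker products; I would only double-check that $r(n-1)$ and $r(m-n+1)$ indeed sum to $rm$, which they do.
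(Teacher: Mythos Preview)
Your proof is correct and uses essentially the same idea as the paper: the key step in both is that $Wx \in \im(\mathds{C}) = \ker(\mathds{B}) = \im(\mathds{B}^T)^\perp$, so the $W$-quadratic form $x^TWx$ vanishes and positive definiteness of $W$ forces $x=0$. The paper phrases the intersection argument as $\im(W^{-1}\mathds{C}) = \ker(\mathds{B}W)$ and then $\im(\mathds{B}^T)\cap\ker(\mathds{B}W)=\{0\}$, and handles the sum first via orthogonal complements rather than your explicit dimension count, but these are cosmetic differences; your ordering (intersection first, then dimensions) is if anything the cleaner presentation.
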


\begin{proof} The first statement follows easily from $\im(\mathds{B}^T)^\perp = \ker(\mathds{B}) = \im(\mathds{C})$ and $W^{-1}$ is positive definite. The second statement holds since $\im(\mathds{B}^T) \cap \im(W^{-1} \mathds{C}) = \im(\mathds{B^T}) \cap \ker(\mathds{B}W)=\{0\}$. 
\qed \end{proof}

As a result, the set of solutions $z(t)=col(p(t),q(t))$ of $\Sigma \times \Gamma$ where $q(t)$ is in one of the affine subspaces of $\mathds{R}^{rm} \backslash \im(\mathds{B}^T)$ is a shifted copy of those solutions of $\Sigma \times \Gamma$ where $q(t) \in \im(\md B^T)$. Consequently, without loss of generality we can assume that $q \in \im(\md B^T)$.

\begin{corollary}\label{copy} For every initial condition $z(0)=col(p(0), q(0))\in \mathds{R}^{r(n+m)}$, there exists a unique vector $\gamma \in \R^{r(m-n+1)}$ such that $q^*(0) := q(0) - W^{-1}\mathds{C} \gamma \in \im(\mathds{B}^T)$. Furthermore, with shifted initial conditions $z^*(0)=col(p(0), q^*(0))$, the trajectory difference $z^*(t) - z(t)$ is constant for all $t \geq 0$. 
\end{corollary}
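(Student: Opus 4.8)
The plan is to handle the two assertions separately: first the unique decomposition of the initial spring state, and then the claim that the induced shift of the full state lies in $\ker(A)$, which is what forces the trajectory difference to be constant.

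For the existence and uniqueness of $\gamma$, I would invoke \cref{bandc} directly. Since $\im(\md B^T) \oplus \im(W^{-1} \md C) = \R^{rm}$ with trivial intersection, the initial spring state admits a unique decomposition $q(0) = a + b$ with $a \in \im(\md B^T)$ and $b \in \im(W^{-1} \md C)$. Because $\md C$ has full column rank and $W^{-1}$ is invertible, $W^{-1}\md C$ has full column rank as well, so $b$ determines a unique coordinate vector $\gamma \in \R^{r(m-n+1)}$ with $b = W^{-1} \md C \gamma$. The dimension count follows from $\rank(B)=n-1$, so that $C$ has $m-n+1$ columns and $\md C = C \otimes I_r$ has $r(m-n+1)$ columns. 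Setting $q^*(0) := q(0) - W^{-1}\md C \gamma = a$ then places $q^*(0)$ in $\im(\md B^T)$, as required.

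For the second assertion, the key observation is that $z(t)$ and $z^*(t)$ both solve the same affine system $\dot z = Az + Gv$ with the identical input $v$, so their difference $\delta(t) := z^*(t) - z(t)$ satisfies the homogeneous linear equation $\dot\delta = A\delta$, with $\delta(0) = col\big(0,\, -W^{-1}\md C \gamma\big)$ since the two initial conditions share the same $p(0)$. The crux is to show $\delta(0) \in \ker(A)$. Multiplying out using the block form of $A$ in \cref{y}, the lower block of $A\delta(0)$ vanishes trivially, while the upper block equals $(-\md B W)(-W^{-1}\md C \gamma) = \md B \md C \gamma$. Since $\im(\md C) = \ker(\md B)$, we have $\md B \md C = 0$, hence $A\delta(0) = 0$.

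Finally, with $\delta(0) \in \ker(A)$, the solution $\delta(t) = e^{At}\delta(0)$ collapses to $\delta(t) = \delta(0)$ for all $t$, because every term of the matrix exponential beyond the zeroth order annihilates $\delta(0)$; thus $z^*(t) - z(t) = \delta(0)$ is constant. I expect the only genuinely substantive step to be recognizing that the shift direction $col(0, -W^{-1}\md C \gamma)$ lies in $\ker(A)$; everything else is a direct consequence of \cref{bandc} and the identity $\md B \md C = 0$. It is worth flagging that the cancellation $W \cdot W^{-1} = I$ in the upper block is precisely why the weighting $W^{-1}$ was built into the decomposition of \cref{bandc} in the first place, which is the conceptual reason the statement comes out so cleanly.
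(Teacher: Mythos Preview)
Your proof is correct and follows essentially the same route as the paper. The paper invokes \cref{bandc} for the unique decomposition and then verifies directly that $\dot p^*(t)-\dot p(t)=0$ and $\dot q^*(t)-\dot q(t)=0$ whenever $p^*(t)=p(t)$ and $q^*(t)-q(t)\in\im(W^{-1}\md C)$; your reformulation via $\delta(0)\in\ker(A)$ and $e^{At}\delta(0)=\delta(0)$ is the same computation packaged slightly more abstractly, with the identical key identity $\md B W(W^{-1}\md C\gamma)=\md B\md C\gamma=0$.
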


\begin{proof} Existence and uniqueness of $\gamma$ follow from \cref{bandc}


Given that at some time $t \geq 0$, $p^*(t)=p(t)$ and $q^*(t)-q(t) \in \im(W^{-1}\mathds{C})$, we have $y^*(t)=y(t)$ and $\mathds{B}W(q^*(t)-q(t))=0$, hence $\dot p^*(t) -\dot p(t) = 0$. Also, $\dot q^*(t) - \dot q(t)= 0$.
\qed\end{proof}

If we restrict $q$ to be in the invariant space of $\im(\md B^T)$, the system $\Sigma \times \Gamma$ has a unique equilibrium:

\begin{proposition}
The system $\Sigma \times \Gamma$ restricted to $q \in \im(\mathds{B}^T)$ has a unique equilibrium $\bar z = col(\bar p, \bar q)$ satisfying $\bar p=M \I_r \beta$ and \\ 
$\{ \bar q \} = \haakk{W^{-1}\md B^\dagger(-R \I_r \beta +v) + \im(W^{-1}\md C)} \cap \im(\md B^T)$.\footnote{Here, $\dagger$ denotes the Moore-Penrose pseudoinverse.} In these expressions, $\beta$ is given by
\eqq{ \beta= ( \mathds{I}_r^T R \mathds{I}_r)^{-1} \mathds{I}_r^Tv
}{beta}
\end{proposition}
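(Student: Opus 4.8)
The plan is to impose the equilibrium conditions $\dot p = 0$ and $\dot q = 0$ on the state-space representation \cref{y}, solve the resulting algebraic system, and use \cref{bandc} to single out the unique solution lying in the invariant subspace $\im(\md B^T)$.

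First I would read off from \cref{y} that an equilibrium $\bar z = col(\bar p, \bar q)$ with $\bar q \in \im(\md B^T)$ must satisfy the two conditions $\md B^T M^{-1}\bar p = 0$ and $-RM^{-1}\bar p - \md B W\bar q + v = 0$. The first says $M^{-1}\bar p \in \ker(\md B^T) = \im(\I_r)$, so there is a vector $\beta \in \R^r$ with $M^{-1}\bar p = \I_r \beta$, i.e. $\bar p = M\I_r\beta$. This already yields the claimed form of $\bar p$, and it remains only to pin down $\beta$ and $\bar q$.

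Next I would substitute $\bar p = M\I_r\beta$ into the second condition to obtain the linear system $\md B W \bar q = -R\I_r\beta + v =: b$. This is solvable in $\bar q$ if and only if $b \in \im(\md B)$, and since $\im(\md B) = \ker(\md B^T)^\perp = \im(\I_r)^\perp$, solvability is equivalent to $\I_r^T b = 0$, that is $\I_r^T R\I_r\,\beta = \I_r^T v$. The matrix $\I_r^T R\I_r = \sum_{i=1}^n R_i$ is positive definite because at least one node is damped ($n_d \geq 1$, so some $R_i \succ 0$ and the rest are $\succcurlyeq 0$), hence invertible, giving $\beta = (\I_r^T R\I_r)^{-1}\I_r^T v$, which is precisely \cref{beta}.

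Finally, for $\bar q$ I would set $\tilde q = W\bar q$, so that $\md B \tilde q = b$ has solution set $\md B^\dagger b + \ker(\md B) = \md B^\dagger b + \im(\md C)$; thus the unrestricted equilibrium values are $\bar q \in W^{-1}\md B^\dagger b + \im(W^{-1}\md C)$. Intersecting this affine subspace with $\im(\md B^T)$ and invoking \cref{bandc}, which gives $\im(\md B^T) \oplus \im(W^{-1}\md C) = \R^{rm}$ with trivial intersection, shows the intersection is a single point, namely the stated $\bar q$. The main (if mild) obstacle is this last step: one must combine the Fredholm solvability argument that fixes $\beta$ with the direct-sum decomposition to guarantee uniqueness, the latter because any two points of the intersection differ by an element of $\im(W^{-1}\md C) \cap \im(\md B^T) = \{0\}$.
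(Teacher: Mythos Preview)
Your proof is correct and follows essentially the same route as the paper: both obtain $\bar p = M\I_r\beta$ from $\dot{\bar q}=0$, fix $\beta$ via the solvability condition $\I_r^T(-R\I_r\beta+v)=0$ together with $\I_r^TR\I_r\succ0$, and then intersect the affine solution set for $\bar q$ with $\im(\md B^T)$ using \cref{bandc}. Your presentation is slightly more explicit about the Fredholm alternative step, but the argument is the same.
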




\begin{proof} From $\dot{\bar q}=0$, we obtain $\mathds{B}^TM^{-1}\bar p=0$. Since for connected graphs, $\ker(\mathds{B}^T)=\im \{ \I_r \}$, it follows that $\bar p \in \im \{ M \I_r \}$. Write $\bar p= M \I_r \beta$ for some $\beta \in \mathbb{R}^r$. The value of $\beta$ can be obtained by setting $\dot{\bar p}=0$, which gives $-R \I_r \beta + v \in \im(\md B)$ and consequently $-\I_r^TR \I_r \beta+ \I_r^T v \in \im(\I_r^T\md B)=\{0\}$. Since $R_i \succ 0$ for at least one $i \in V$, it follows that $\I_r^TR \I_r \succ 0$ and thus $\beta$ can be given uniquely as in \cref{beta}. Substituting this result in the dynamics of $p$, we obtain $\md BW \bar q = -R \I_r \beta +v$, which gives $\bar q \in W^{-1}\md B^\dagger (-R \I_r \beta + v) + \im(W^{-1} \md C)$, with $\im(W^{-1} \md C) = \ker( \md B W)$. By assumption and by the dynamics of $q$, $\bar q \in \im(\md B^T)$. The intersection of both sets is a singleton by \cref{bandc}. 
\qed\end{proof}

{\bf Remark } The unique equilibrium point for $q(0) \in \im(\md B^T)$ corresponds to a state of output consensus since  $\bar v = M^{-1}\bar p \in \im\{ \I_r \}$. Also for $q(0) \notin \im(\md B^T)$, it is shown readily that $\bar v = \I_r \beta$ with $\beta$ being given by \cref{beta}.

\subsection{Shifted model}

Now, we introduce shifted state variables so that the equilibrium coincides with the origin. The main benefit of doing this is that it allows to use common techniques to show output consensus in section 4.2. Besides that, we get rid of the constant input $v$ in the dynamics. Define $\tilde p(t)= p(t) - \bar p$, $\tilde q(t) =q(t) - \bar q$. Stack these together in the state vector $\tilde z=col(\tilde p, \; \tilde q)$ and define the output $\tilde y=M^{-1}p$, then we obtain the linear time-invariant (LTI) closed-loop system
\eqq{
\ub{\bp \dot{\tilde p} \\ \dot{\tilde q} \ep}{\dot{\tilde z}} \isen \ub{\bp -RM^{-1} & -\md BW \\ \md B^TM^{-1} & 0 \ep}{A} \ub{\bp \tilde p \\ \tilde q \ep}{\tilde z}  \\
\tilde y \isen \bp M^{-1} & 0 \ep \bp \tilde p \\ \tilde q \ep 
}{y2}

Since $\bar q \in \im(\md B^T)$, it follows that $q(t) \in \im(\md B^T)$ if and only if $\tilde q(t) \in \im(\md B^T)$. By assumption, $\Sigma \times \Gamma$ is only defined on the invariant subspace $$\Omega = \{ col(\tilde p, \; \tilde q) \in \R^{r(n+m)} \mid  \tilde q \in \im(\md B^T) \}$$
System \cref{y2} defined on $\Omega$ has a unique equilibrium point at $\bar{\tilde z}=0$. Similarly to the procedure in section 2.2, we can introduce variables $\tilde s(t) = col(\tilde s_d(t), \tilde s_u(t)) := \int \tilde y(\tau) d\tau + \tilde s_0$, where $\tilde s_0$ is such that $\tilde q(0)=\md B^T \tilde s_0$, to obtain the second-order equation $$M\ddot{\tilde s}=-R\dot{\tilde s}- \md L{\tilde s}$$ 
Noting that $\tilde q(t)=\md B^T \tilde s(t)$ and $\tilde p(t)=M \dot{\tilde s}(t)$, \cref{y2} can be written equivalently as an LTI system with states $\tilde s$ and $\tilde y$. In the next section we find \cref{qook} that connects global asymptotic stability of \cref{y2} defined on $\Omega$ with output consensus of \cref{y}.

\section{Steady-state behavior}\label{sec4}

In this section, we determine the long-run behavior of \cref{y2} defined on $\Omega$ by performing a common Lyapunov analysis. This allows us to derive the set of points to which all solutions converge. To find necessary conditions for the steady-state behavior, we use as Lyapunov function the Hamiltonian function that has a minimum at the equilibrium point $(\bar{\tilde p}, \bar{\tilde q})=(0,0)$:
$$U(\tilde p,\tilde q)=\frac{1}{2} \tilde p^TM^{-1}\tilde p+ \frac{1}{2} \tilde q^TW\tilde q $$

The time derivative of $U(\tilde p, \tilde q)$ now reads as
\begin{align*}
\dot U(\tilde p,\tilde q) &= \dot{\tilde p}^TM^{-1}\tilde p+ \dot{\tilde q}^TW\tilde q \\
&= (-RM^{-1} \tilde p- \md BW\tilde q)^T   M^{-1}{\tilde p} +  {\tilde p}^T M^{-1} \md BW\tilde q  \\
&= -\bp  {\tilde p}_d^T & {\tilde p}_u^T \ep M^{-1} \bp R_d & 0\\ 0 & R_u \ep M^{-1} \bp  {\tilde p}_d \\ {\tilde p}_u \ep \\
&=- {\tilde p}_d^T M_d^{-1}R_dM_d^{-1}  {\tilde p}_d - \tilde p_u^T M_u^{-1} R_u M_u^{-1} \tilde p_u 
\end{align*}

From the fact that $M$ and $W$ are positive definite, $U$ is a postive-definite function for $(\tilde p,\tilde q) \neq (0,0)$, while $\dot U$ is negative semi-definite, $U$ is a suitable Lyapunov function. Since $U$ is a radially unbounded function, it follows that the system $\dot{\tilde z}=A \tilde z$ as defined in \cref{y2}, is stable. 



\begin{lemma}\label{qook}
Every trajectory $y(t)$ of $\Sigma \times \Gamma$, i.e. system \cref{y}, converges to a point in the set $\im\{ \I_r \}$ if and only if every trajectory $\tilde z(t)=col( \tilde p(t),  \tilde q(t))$ of \cref{y2} defined on $\Omega$ converges to the origin. 
\end{lemma}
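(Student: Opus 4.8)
The plan is to reduce the statement to a clean dictionary between the output of \cref{y} and the full state of the shifted system \cref{y2}, and then to run a LaSalle-type invariance argument for the harder implication. First I would record the dictionary. Writing $\bar y := M^{-1}\bar p = \I_r \beta \in \im\{\I_r\}$ for the equilibrium output, the shifted output satisfies $\tilde y = M^{-1}\tilde p = y - \bar y$. Hence the map $\tilde z = col(\tilde p, \tilde q) \mapsto z = \tilde z + \bar z$ is a bijection between trajectories of \cref{y2} on $\Omega$ and trajectories of \cref{y} with $q(0) \in \im(\md B^T)$, under which $y(t) = \tilde y(t) + \bar y$. Since $\bar y \in \im\{\I_r\}$, the output $y(t)$ converges to a point of $\im\{\I_r\}$ if and only if $\tilde p(t) = M\tilde y(t)$ converges to a point of $\im\{M\I_r\}$. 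For trajectories of \cref{y} with $q(0) \notin \im(\md B^T)$, I would invoke \cref{copy}: each such trajectory has the same output $y(t)$ as one with controller state in $\im(\md B^T)$, so it suffices to treat the case $q(0)\in\im(\md B^T)$ throughout.

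With this dictionary, the implication ``$\tilde z(t) \to 0$ $\Rightarrow$ output consensus'' is immediate: $\tilde z(t) \to 0$ gives $\tilde p(t) \to 0$, hence $y(t) = M^{-1}\tilde p(t) + \bar y \to \bar y \in \im\{\I_r\}$, and combined with \cref{copy} this yields convergence of the output of every trajectory of \cref{y}.

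The substantive direction is the converse. Assuming every output converges to a point of $\im\{\I_r\}$, the dictionary means $\tilde p(t) \to \pi \in \im\{M\I_r\}$ for every trajectory of \cref{y2} on $\Omega$. Fix such a trajectory. Since the Lyapunov analysis already gives that $U$ is nonincreasing and the trajectory is bounded, I would pass to its $\omega$-limit set $\Lambda \sbq \Omega$, which is nonempty, compact, invariant, and on which $U$ is constant. Along any trajectory contained in $\Lambda$ one then has $\dot U \equiv 0$, and since $R_d \succ 0$ the formula for $\dot U$ forces $\tilde p_d \equiv 0$; because $\tilde p(t) \to \pi$, moreover $\tilde p \equiv \pi$ on $\Lambda$, so $\pi_d = 0$. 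Boundedness of this $\Lambda$-trajectory rules out drift of $\tilde q$, whose derivative $\md B^T M^{-1}\pi$ is constant, forcing $\md B^T M^{-1}\pi = 0$, i.e. $M^{-1}\pi \in \ker(\md B^T) = \im\{\I_r\}$; writing $\pi = M\I_r\xi$ and using $\pi_d = 0$ with $M_d \succ 0$ and $n_d \geq 1$ (so the damped block of $\I_r$ is injective) gives $\xi = 0$ and hence $\pi = 0$. Thus $\tilde p \equiv 0$ on $\Lambda$, so $\dot{\tilde p} \equiv 0$ there, leaving $\md BW\tilde q \equiv 0$; since $\Lambda \sbq \Omega$ keeps $\tilde q \in \im(\md B^T)$, \cref{bandc} gives $\tilde q \equiv 0$. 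Therefore $\Lambda = \{0\}$, and the bounded trajectory converges to the origin.

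I expect the main obstacle to be precisely this converse: the hypothesis ``output converges'' controls only the momentum block $\tilde p$, and one must upgrade convergence of $\tilde p$ (to an a priori possibly nonzero consensus vector $\pi$) into convergence of the whole state to the origin. The two ingredients that make this work are the damping, through $R_d \succ 0$ together with $n_d \geq 1$, which forces $\pi = 0$, and the transversality $\im(\md B^T) \cap \im(W^{-1}\md C) = \{0\}$ of \cref{bandc}, which forces $\tilde q \equiv 0$ on the $\omega$-limit set. The delicate technical point is that $\im\{\I_r\}$-convergence of the output only pins the limit of $\tilde p$ to $\im\{M\I_r\}$ and not to $0$, so the energy/LaSalle step is genuinely needed and is not a direct consequence of uniqueness of the equilibrium.
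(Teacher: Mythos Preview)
Your argument is correct, but the route you take for the hard implication differs from the paper's. The paper avoids the $\omega$-limit machinery altogether: from $p(t)\to p^*\in\im\{M\I_r\}$ and boundedness of $\ddot p$ (already secured by the Lyapunov stability), it invokes Barbalat's Lemma to get $\dot p(t)\to 0$, hence $\md BW q(t)$ converges; the transversality $\im(\md B^T)\cap\ker(\md BW)=\{0\}$ (your \cref{bandc}) then forces $q(t)$ to converge to a single point, so the full state $z(t)$ converges, and uniqueness of the equilibrium pins the limit to $\bar z$. In particular, the paper never needs to show $\pi=0$ by hand: Barbalat plus uniqueness does that work implicitly, so the ``delicate technical point'' you flag at the end is simply sidestepped.

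Your $\omega$-limit/LaSalle approach is a legitimate alternative and has the virtue of staying entirely inside the energy framework already set up, without a separate appeal to Barbalat. One minor redundancy: once you know $\pi\in\im\{M\I_r\}$ from the output hypothesis, you already have $M^{-1}\pi\in\im\{\I_r\}=\ker(\md B^T)$, so the drift argument ruling out $\md B^T M^{-1}\pi\neq 0$ is not needed; you can go straight from $\pi_d=0$ (via $\dot U\equiv 0$ on $\Lambda$) and $\pi=M\I_r\xi$ to $\xi=0$. Either way, both proofs hinge on the same two structural facts: $R_d\succ 0$ with $n_d\geq 1$, and the transversality of \cref{bandc}.
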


\begin{proof}  In this proof, every convergence statement holds exclusively for $t \to \infty$.
$(\Rightarrow)$ Suppose that every output trajectory $y(t)$ of $\Sigma \times \Gamma$ converges to $\im\{ \I_r \}$, then this holds in particular for those trajectories generated with $q(0) \in \im(\md B^T)$. So $p(t) \to p^*$, where $p^* \in \im\{ M \I_r \}$. Since $\Sigma \times \Gamma$ is stable, $\ddot{p}(t)$ is bounded, hence $\dot p(t)$ is uniformly continuous and we can apply Barbalat's lemma to conclude that $\dot p(t) \to 0$. That gives $-RM^{-1}p(t)-\md BW  q(t)+v \to 0$. So $\md BWq(t)$ converges too and consequently, $q(t) \to q^* + \ker(\md BW)$ for some $q^* \in \R^m$. Since $q(t) \in \im(\md B^T)$, we have that $q(t) \to \im(\md B^T) \cap \haakk{q^* + \ker(\md BW)}$, which is a singleton, so $q(t)$ converges too and consequently, the whole state $z(t)$ converges. Uniqueness of the equilibrium implies that $z(t) \to \bar z$ and hence $\tilde z(t) \to 0$.
$(\Leftarrow)$ For every trajectory $\tilde z(t) \to 0$ we have $z(t) \to \bar z$, yielding $y(t) \to \bar y=M^{-1}\bar p =\I_r \beta$, with $\beta$ as in \cref{beta}.
\qed\end{proof}

\cref{qook} shows that asymptotic output consensus of the system $\Sigma \times \Gamma$ is equivalent to global asymptotic stability (GAS) of the system $\cref{y2}$ defined on $\Omega$ and we will interchangeably use both terms.

Now we use LaSalle's invariance principle: as $t$ goes to infinity, the trajectory converges to the largest invariant set in the set of states where $\dot U=0$. 

\begin{lemma}\label{pede} 
Let $\mc S^{LS}$ be the set of initial conditions $\tilde z(0)=col(\tilde p(0),\tilde q(0)) \in \Omega$ of the system \cref{y2} such that for all $t \geq 0$, $\tilde p_d(t)=0$ and $\tilde p_u(t) \in \ker(R_u M_u^{-1})$. The largest invariant set contained in the subset of $\Omega$ where $\dot U=0$ equals $\mc S^{LS}$. \end{lemma}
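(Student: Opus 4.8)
The plan is to proceed in three stages: make the set $\{\dot U = 0\}$ explicit, recognize that $\mc S^{LS}$ is by definition the set of forward trajectories that never leave this set, and then verify that such a set is exactly the largest invariant set.

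First I would use the expression for $\dot U$ derived just above,
\[ \dot U(\tilde p,\tilde q) = -\tilde p_d^T M_d^{-1}R_dM_d^{-1}\tilde p_d - \tilde p_u^T M_u^{-1}R_uM_u^{-1}\tilde p_u. \]
Both summands are nonpositive since $M \succ 0$ and $R \succcurlyeq 0$, so $\dot U = 0$ forces each to vanish separately. In the damped block every $R_i$ with $i \in V_d$ is positive definite, hence $R_d \succ 0$ and $M_d^{-1}R_dM_d^{-1} \succ 0$, so the first term is zero iff $\tilde p_d = 0$. In the undamped block, rewriting the term as $(M_u^{-1}\tilde p_u)^T R_u (M_u^{-1}\tilde p_u)$ with $R_u \succcurlyeq 0$ shows it is zero iff $M_u^{-1}\tilde p_u \in \ker(R_u)$, i.e.\ iff $\tilde p_u \in \ker(R_uM_u^{-1})$. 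This gives
\[ E := \{\dot U = 0\}\cap\Omega = \{ col(\tilde p,\tilde q)\in\Omega : \tilde p_d = 0,\ \tilde p_u \in \ker(R_uM_u^{-1}) \}. \]

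With $E$ identified, the definition of $\mc S^{LS}$ reads exactly as the set of $\tilde z(0)\in\Omega$ whose forward trajectory stays in $E$ for all $t\geq 0$, and I would then show this coincides with the largest invariant set in $E$ through three observations: (i) evaluating at $t=0$ gives $\mc S^{LS}\subseteq E$; (ii) $\mc S^{LS}$ is invariant, because for $\tilde z(0)\in\mc S^{LS}$ the point $e^{As}\tilde z(0)$ has forward trajectory $e^{A(s+t)}\tilde z(0)\in E$ for all $t\geq 0$ (as $s+t\geq 0$), so $e^{As}\tilde z(0)\in\mc S^{LS}$; (iii) maximality, because any invariant $M\subseteq E$ has all its forward trajectories inside $M\subseteq E$, so every point of $M$ lies in $\mc S^{LS}$, whence $M\subseteq\mc S^{LS}$.

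The step requiring care is (ii)--(iii), namely reconciling the one-sided condition ``for all $t\geq 0$'' in the definition of $\mc S^{LS}$ with the two-sided notion of invariance underlying LaSalle's principle. This is settled by linearity: since $E$ is a linear subspace and the flow $e^{At}$ is linear, $\mc S^{LS}=\{\tilde z_0 : e^{At}\tilde z_0\in E\ \forall t\geq 0\}$ is itself a linear subspace; positive invariance of a linear subspace under $e^{At}$ is equivalent to $A$-invariance (differentiate $e^{At}\mathcal V\subseteq\mathcal V$ at $t=0$) and hence to invariance for all real $t$. I would flag this as the one place not to treat the claim as merely definitional; once $E$ is computed, the remaining arguments are routine.
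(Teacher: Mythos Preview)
Your proof is correct and follows the same approach as the paper: identify the zero set of $\dot U$ via positive (semi)definiteness of the two blocks, then observe that $\mc S^{LS}$ is by construction the largest invariant subset of that set. The paper's own proof is terser and simply declares the last step, omitting your careful reconciliation of the one-sided condition in the definition of $\mc S^{LS}$ with two-sided invariance; your linearity argument fills a gap the paper leaves implicit.
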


\begin{proof} The set of points in $\Omega$ where $\dot U=0$, is equal to the set of points in $\Omega$ where ${\tilde p}_d =0$ and $\tilde p_u \in \ker(R_uM_u^{-1})$. This follows by positive definiteness of $M_d^{-1}R_dM_d^{-1}$ and by $\ker(M_u^{-1}R_uM_u^{-1})=\ker(R_uM_u^{-1})$. The largest invariant subset of the set of states where $\dot U=0$ is then precisely $\mc S^{LS}$. 
 \qed\end{proof}

\subsection{Behavior of the undamped nodes at steady state}

In this subsection, we give a precise characterization of $\mc S^{LS}$ as defined in \cref{pede} and work towards an LTI system that is observable if and only if output consensus of $\Sigma \times \Gamma$ is achieved. Starting with \cref{y2}, we derive the following explicit characterization of $\mc S^{LS}$:

\begin{lemma}\label{bestbelangrijk} $\mc S^{LS} = \hat Q \ker(Obs(\hat C, \hat A))$, with $\mc S^{LS}$ as defined in \cref{pede} and
$$\hat Q= \bp 0_{n_dr \times n_ur} & 0_{n_dr \times n_ur} \\ M_u & 0_{n_ur \times n_ur} \\ 0_{mr \times n_ur} & \md B_u^T- \md B_d^T(\md L_d^d+\md L_d^i)^{-1}\md L_i^i \ep ,$$ 
\eq{
\hat C =  \bp  \md L_i^i & 0_{n_dr \times n_ur} \\ R_u & 0_{n_ur \times n_ur} \ep, \qquad
\hat A = \bp 0_{n_ur \times n_ur} &  -M_u^{-1} \tilde{\md L}_u \\ I_{n_ur} & 0_{n_ur \times n_ur} \ep, 
}
\eqq{
\tilde{\md L}_u = (\md L_u^u+ \md L_u^i) - (\md L_i^i)^T (\md L_d^d+\md L_d^i)^{-1} \md L_i^i
}{tlu}
Here, the block rows of $\hat Q$ are decomposed according to the decomposition of $\tilde z(t) = col( \tilde p_d, \tilde p_u, \tilde q )$ and the block columns of $\hat Q$ according to the decomposition of $\hat A$. 
\end{lemma}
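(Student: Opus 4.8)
The plan is to prove the two set inclusions separately, using throughout the standard characterization of the unobservable subspace, $\ker(Obs(\hat C,\hat A)) = \{x_0 : \hat C e^{\hat A t}x_0 = 0 \text{ for all } t\ge 0\}$, together with the block decomposition of $\md L$ from \cref{el}, under which $\md B_d W\md B_d^T = \md L_d^d+\md L_d^i$, $\md B_d W\md B_u^T = \md L_i^i$ and $\md B_u W\md B_u^T = \md L_u^u+\md L_u^i$. A preliminary observation I would record is that $\md L_d^d+\md L_d^i$ is invertible: it is the Laplacian of $\mc G_d$ grounded by the positive-semidefinite diagonal $\md L_d^i$, and since $\mc G$ is connected every component of $\mc G_d$ carries an interconnecting edge whenever undamped nodes are present (otherwise that component would be an isolated component of $\mc G$), so by the grounded-Laplacian argument the Schur complement $\tilde{\md L}_u$ in \cref{tlu} is well defined.

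For the inclusion $\mc S^{LS}\sbq \hat Q\ker(Obs(\hat C,\hat A))$, I would take $\tilde z(0)\in\mc S^{LS}$ and follow its trajectory $\tilde z(t)=col(\tilde p_d,\tilde p_u,\tilde q)$ under \cref{y2}, which by \cref{pede} satisfies $\tilde p_d\equiv 0$, $\tilde p_u(t)\in\ker(R_uM_u^{-1})$ and $\tilde q\in\im(\md B^T)$. I introduce the velocity $\tilde v_u:=M_u^{-1}\tilde p_u$ and positions by fixing a decomposition $\tilde q(0)=\md B_d^T\tilde s_d(0)+\md B_u^T\tilde s_u(0)$ and setting $\tilde s_u(t):=\int_0^t\tilde v_u\,d\tau+\tilde s_u(0)$; the identity $\dot{\tilde q}=\md B_u^TM_u^{-1}\tilde p_u=\md B_u^T\tilde v_u$ (valid since $\tilde p_d\equiv 0$) then shows $\tilde q(t)=\md B_d^T\tilde s_d(0)+\md B_u^T\tilde s_u(t)$ with $\tilde s_d$ constant. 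Since $\tilde p_d\equiv 0$ forces $\dot{\tilde p}_d=-\md B_dW\tilde q\equiv 0$, I obtain the algebraic constraint $(\md L_d^d+\md L_d^i)\tilde s_d(0)+\md L_i^i\tilde s_u(t)=0$ for all $t$; differentiating yields $\md L_i^i\tilde v_u\equiv 0$, and the constraint itself gives the Kron relation $\tilde s_d(0)=-(\md L_d^d+\md L_d^i)^{-1}\md L_i^i\tilde s_u(t)$. Substituting this into $\dot{\tilde p}_u=-\md B_uW\tilde q$ produces $M_u\dot{\tilde v}_u=-\tilde{\md L}_u\tilde s_u$, so $col(\tilde v_u,\tilde s_u)$ obeys $\hat A$. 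The two conditions $\md L_i^i\tilde v_u\equiv 0$ and $R_u\tilde v_u\equiv 0$ (the latter being exactly $\tilde p_u\in\ker(R_uM_u^{-1})$) say precisely that $\hat C\,col(\tilde v_u,\tilde s_u)\equiv 0$, i.e.\ $col(\tilde v_u(0),\tilde s_u(0))\in\ker(Obs(\hat C,\hat A))$, and a direct computation gives $\tilde z(0)=\hat Q\,col(\tilde v_u(0),\tilde s_u(0))$.

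For the reverse inclusion I would run the computation backwards. Starting from $col(\tilde v_u(0),\tilde s_u(0))\in\ker(Obs(\hat C,\hat A))$ with trajectory generated by $\hat A$, I define $\tilde z(t):=\hat Q\,col(\tilde v_u(t),\tilde s_u(t))$, that is $\tilde p_d\equiv 0$, $\tilde p_u=M_u\tilde v_u$ and $\tilde q=(\md B_u^T-\md B_d^T(\md L_d^d+\md L_d^i)^{-1}\md L_i^i)\tilde s_u$, which lies in $\im(\md B^T)$, so $\tilde z\in\Omega$. Unobservability supplies $\md L_i^i\tilde v_u\equiv 0$ and $R_u\tilde v_u\equiv 0$; the latter is the requirement $\tilde p_u\in\ker(R_uM_u^{-1})$, while the former is what I need to get $\dot{\tilde q}=\md B_u^T\tilde v_u$. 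I then check that $\tilde z$ solves \cref{y2}: the chosen $\tilde q$ makes $\md B_dW\tilde q=-\md L_i^i\tilde s_u+\md L_i^i\tilde s_u=0$, so $\dot{\tilde p}_d=0$ is consistent; $\hat A$ together with the Schur complement identity \cref{tlu} gives $M_u\dot{\tilde v}_u=-\md B_uW\tilde q$; and $\md L_i^i\tilde v_u\equiv 0$ makes $\dot{\tilde q}=\md B_u^T\tilde v_u=\md B^TM^{-1}\tilde p$. Hence $\tilde z$ is a genuine trajectory of \cref{y2} that remains in the set of \cref{pede}, so $\tilde z(0)\in\mc S^{LS}$.

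I expect the main obstacle to be the forward direction's bookkeeping of the position variables: showing that the time-varying constraint $\md B_dW\tilde q\equiv 0$ is equivalent to the pair ``Kron relation plus $\md L_i^i\tilde v_u\equiv 0$'', and in particular that $\tilde s_d$ is legitimately constant while $\tilde s_u$ carries the entire dynamics. The other point requiring care is the invertibility of $\md L_d^d+\md L_d^i$ recorded above, on which the Schur complement and thus the whole reduced model $(\hat C,\hat A)$ depend.
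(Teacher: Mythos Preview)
Your proposal is correct and follows essentially the same route as the paper: both directions proceed by introducing the velocity $\tilde v_u=M_u^{-1}\tilde p_u$ and positions $\tilde s_d,\tilde s_u$, deriving from $\md B_dW\tilde q\equiv 0$ the pair ``Kron relation $\tilde s_d=-(\md L_d^d+\md L_d^i)^{-1}\md L_i^i\tilde s_u$'' plus ``$\md L_i^i\tilde v_u\equiv 0$'', and then identifying the reduced dynamics with $(\hat C,\hat A)$. Your explicit justification of the invertibility of $\md L_d^d+\md L_d^i$ is a welcome addition that the paper leaves implicit; the minor sign slip in writing $\md B_dW\tilde q=-\md L_i^i\tilde s_u+\md L_i^i\tilde s_u$ (it should be $\md L_i^i\tilde s_u-\md L_i^i\tilde s_u$) does not affect the argument.
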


\begin{proof}
$(\sbq)$ Take $\tilde z(0) \in \mc S^{LS}$ and consider the trajectory $\tilde z(t) = e^{At} \tilde z(0) \subset \Omega$, which is a solution to \cref{y2}. Decompose $\tilde z$ as $col(\tilde p_d, \tilde p_u, \tilde q)$, then we have for all $t \geq 0$: $\tilde p_d(t)=0$, $\tilde p_u(t) \in \ker(R_uM_u^{-1})$, $\tilde q(t) \in \im(\md B^T)$ and
 $$\bp \dot{ \tilde p}_d \\ \dot{\tilde p}_u \\ \dot{\tilde q} \ep = \bp -R_dM_d^{-1} & 0 & -\md B_dW \\ 0 & -R_uM_u^{-1} & -\md B_uW \\ \md B_d^T M_d^{-1} & \md B_u^T M_u^{-1} & 0 \ep \bp { \tilde p}_d \\ {\tilde p}_u \\ {\tilde q} \ep$$
From $\tilde p_d \equiv 0$, it also follows that $\dot{\tilde p}_d \equiv 0$. Also, $-R_uM_u^{-1}\tilde p_u \equiv 0$. Substituting these results in the dynamics, we obtain that for all $t \geq 0$, $-\md B_dW \tilde q(t) = 0$ and 
\eqq{
\bp \dot{\tilde p}_u \\ \dot{\tilde q} \ep = \bp 0 & - \md B_uW \\ \md B_u^TM_u^{-1} & 0 \ep \bp \tilde p_u \\ \tilde q \ep
}{tored}
Define the function $\tilde y_u(t):=M_u^{-1}\tilde p_u(t)$ and the auxiliary function $\tilde y_d(t) := M_d^{-1} \tilde p_d(t) \equiv 0$. It follows directly that
\eqq{
R_u \tilde y_u(t) \equiv 0
}{ru}
Since $\tilde q \in \im(\md B^T)$, there exist initial positions $\tilde s_d(0)$ and $\tilde s_u(0)$ satisfying 
\eqq{
\tilde q(0) = \md B_d^T \tilde s_d(0) + \md B_u^T \tilde s_u(0)
}{qq}
Now, define the functions $\tilde s_u(t) := \int_0^t \tilde y_u(\tau) d \tau + \tilde s_u(0)$ and $\tilde s_d(t) := \int_0^t \tilde y_d(\tau) d \tau + \tilde s_d(0) \equiv \tilde s_d(0)$. From $\dot{\tilde q}(t) = \md B_u^T \tilde y_u(t)$, we have $\tilde q(t) = \int_0^t \md B_u^T \tilde y_u(\tau) d \tau + \tilde q(0)$, which is easily shown to satisfy $\tilde q(t)=\md B_d^T \tilde s_d(t) + \md B_u^T \tilde s_u(t)$ for all $t \geq 0$. 

From $\md B_dW \tilde q(t) \equiv 0$, it follows that $-\md B_dW\md B_d^T \tilde s_d(t) = \md B_dW\md B_u^T \tilde s_u(t)$, which, after exploiting the decomposition of $\md L$ as in \cref{el}, results in 
\eqq{
\tilde s_d(t) \equiv -(\md L_d^d+\md L_d^i)^{-1} \md L_i^i \tilde s_u(t)
}{sdd} Also, from $-\md B_dW \tilde q(t) \equiv 0$, it follows that $-\md B_dW \dot{\tilde q}(t) = -\md B_dW \md B_u^T M_u^{-1} \tilde p_u(t)\equiv 0$, or equivalenlty, 
\eqq{
-\md L_i^i \tilde{ y}_u(t) \equiv 0
}{nul}
The first row of the dynamics \cref{tored} can now be rewritten in the new variables: 
$$ M_u \dot{\tilde y}_u(t) = -\md B_uW(\md B_d^T \tilde s_d(t) + \md B_u^T \tilde s_u(t))$$
By construction, $\dot{\tilde s}_u(t) = \tilde y_u(t)$. Using \cref{sdd} and \cref{tlu}, the dynamics of $\tilde s_u$ and $\tilde y_u$ satisfy 
\eqq{
\bp \dot{\tilde y}_u \\ \dot{\tilde s}_u \ep = \bp 0 & -M_u^{-1} \tilde{\md L}_u \\ I_{n_ur} & 0 \ep \bp \tilde y_u \\ \tilde s_u \ep
}{dyna}
From \cref{ru}, \cref{nul} and \cref{dyna}, it follows immediately that $col(\tilde y_u(t),  \tilde s_u(t) ) \in \ker(Obs(\hat C, \hat A))$ for all $t \geq 0$, which holds in particular for $t=0$. Finally, by combining \cref{qq} and \cref{sdd} for $t=0$, we see that $\tilde q(0)=(\md B_u^T-\md B_d^T(\md L_d^d+\md L_d^i)^{-1} \md L_i^i) \tilde s_u(0)$. We conclude that
\eqq{
\bp \tilde p_d(0) \\ \tilde p_u(0) \\ \tilde q(0) \ep  \in \hat Q \ker(Obs(\hat C, \hat A))
}{itis}

$(\spq)$ Consider any vector $col(\tilde y_u(0), \; \tilde s_u(0) ) \in \ker(Obs(\hat C, \hat A))$ and the trajectory $col(\tilde y_u(t), \; \tilde s_u(t) ) = e^{\hat A t} col(\tilde y_u(0), \tilde s_u(0) )$. Note that $\hat Q \: col(\tilde y_u(0), \; \tilde s_u(0) ) \in \mc S^{LS}$ if and only if the trajectory
\eqq{
\tilde z(t)=\bp \tilde p_d(t) \\ \tilde p_u(t) \\ \tilde q(t) \ep  := \ub{ \bp 0 & 0 \\ M_u & 0 \\ 0 & \md B_u^T-\md B_d^T(\md L_d^d+\md L_d^i)^{-1}\md L_i^i \ep}{\hat{Q}}  \bp \tilde y_u(t) \\  \tilde s_u(t) \ep
}{newtraj}
is contained in ${\mc S}^{LS}$ and satisfies the dynamics of system \cref{y2}. Firstly, $\tilde p_d(t)=0$ and $\tilde q(t) \in \im(\md B^T)$ for all $t \geq 0$. Secondly, by unobservability we have $R_u \tilde y_u(t) = R_uM_u^{-1} \tilde p_u(t) \equiv 0$. It remains to show that $\dot{\tilde z}(t) = A \tilde z(t)$ for all $t \geq 0$.  Note that $\md B_dW \tilde q(t) =  \md L_i^i \tilde s_u(t)  - \md L_i^i \tilde s_u(t)=0$. Therefore, 
\eqq{
\dot{\tilde p}_d(t) = 0 = \ub{- R_dM_d^{-1} \tilde p_d(t)}{=0}  - \ub{\md B_dW \tilde q(t)}{=0}
}{x1}
From $\tilde q(t)=(\md B_u^T-\md B_d^T(\md L_d^d+\md L_d^i)^{-1}\md L_i^i)\tilde s_u(t)$, it follows that  $\md B_uW \tilde q(t)= \tilde{\md L}_u \tilde s_u(t)$. Also, $\dot{\tilde y}_u(t) = -  M_u^{-1} \tilde{\md L}_u \tilde s_u(t)$, hence: 
\eqq{
\dot{\tilde p}_u(t) = M_u \dot{\tilde y}_u(t) = \ub{-R_uM_u^{-1} \tilde p_u(t)}{=0}- \md B_uW \tilde q(t)
}{x2}
By unobservability, $\md L_i^i \tilde y_u(t) = \md L_i^i \dot{\tilde s}_u(t) \equiv 0$. As a consequence, 
$\dot{\tilde q}(t) = \md B_u^T \dot{\tilde s}_u(t) = \md B_u^T \tilde y_u(t) = \md B_u^T M_u^{-1} \tilde p_u(t)$ and therefore
\eqq{
\dot{\tilde q}(t) = \ub{\md B_d^TM_d^{-1}\tilde p_d(t)}{=0} + \md B_u^T M_u^{-1} \tilde p_u(t)
}{x3}
Taking together \cref{x1}, \cref{x2}, \cref{x3}, we have 
\eq{
 \ub{\bp \dot{\tilde p}_d \\ \dot{\tilde p}_u \\ \dot{\tilde q} \ep}{\dot{\tilde z}} &= \ub{\bp -R_d M_d^{-1} & 0 & -\md B_dW 
\\ 0 & -R_uM_u^{-1} & -\md B_uW \\ \md B_d^TM_d^{-1} & \md B_u^TM_u^{-1} & 0  \ep}{A} \ub{\bp {\tilde p}_d \\ {\tilde p}_u \\ {\tilde q} \ep}{{\tilde z}} 
}
\qed\end{proof}

$\mc S^{LS}$ can also be written as the unobservable subspace of the reduced system that gives the dynamics of the undamped nodes written in the $(\tilde p, \tilde q)$ coordinates: $\mc S^{LS} = \breve Q \ker(Obs(\breve C, \breve A))$, with
\eq{
\breve Q= \bp 0_{n_dr \times n_ur} & 0_{n_dr \times mr} \\ I_{n_ur} & 0_{n_ur \times mr} \\ 0_{mr \times n_ur} & I_{mr} \ep,
}
\eq{
\breve C =  \bp   0_{n_dr \times n_ur} & \md B_d W \\ R_uM_u^{-1} & 0_{n_ur \times mr} \\ 0_{(m-n+1)r \times n_ur} & \md C^T  \ep, \; \;
\breve A = \bp 0_{n_ur \times n_ur} & -\md B_uW \\ \md B_u^TM_u^{-1} & 0_{mr \times mr} \ep
}
Here, the row and column decomposition of $\breve A$ is such that the upper rows and left columns are associated with $\tilde p_u$ and the bottom rows and right columns with $\tilde q$.  

\medskip
{\bf Remark } 
\bi
\item The state trajectories $col(\tilde y_u, \tilde s_u)$ of the system $(\hat C, \hat A)$ describe the behavior of the undamped nodes in the reduced graph with total Laplacian matrix $\tilde{\md L}_u$, which is obtained by eliminating the damped nodes according to a Kron reduction. Kron reduction changes the topology including edge weights, but connectivity is preserved, hence $\ker(\tilde{\md L}_u)=\im( \I_r )$. $\hat Q$ represents the transformation matrix of the $(\tilde y_u(t), \tilde s_u(t))$ coordinates to the $(\tilde p_d(t), \tilde p_u(t), \tilde q_u(t))$ coordinates at steady state. Furthermore, $\md L^i_i \tilde y_u=0$ is an algebraic constraint that boils down to $\md B_d W \tilde q=0$, i.e. zero net force at damped nodes in the original graph. Finally, the constraint $R_u \tilde y_u=0$ assures that partially undamped nodes can only move in directions in which they do not experience damping. 
\item The system $(\breve C, \breve A)$ gives the dynamics of the undamped nodes when the damped nodes would be fixed at a single position. The set of solutions of $(\breve C, \breve A)$ for which $\md B_dW \tilde q(t)\equiv 0$, $R_uM_u^{-1} \tilde p_u(t) \equiv 0$ and $\tilde q(t) \in \im(\md B^T)$ for all $t \geq 0$ is the orthogonal projection of the steady-state trajectories of $\tilde z$ of the system \cref{y2} defined on $\Omega$ onto the $\tilde p_u$ and $\tilde q$ coordinate space. This yields $\mc S^{LS}$ by taking $\tilde p_d(t) \equiv 0$. 
\ei
\medskip

We show that all solutions in $\mc S^{LS}$ are composed of sinusoids:

\begin{proposition}\label{nwg}  Each solution $\tilde z(t) \sbq \mc S^{LS}$ generated by system \cref{y2} can be written as a finite sum of sinusoids. 
\end{proposition}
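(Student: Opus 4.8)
The plan is to reduce everything to the observation that $\mc S^{LS}$ is an $A$-invariant subspace on which the Hamiltonian $U$ is conserved, and to deduce that the generator of the flow on $\mc S^{LS}$ is skew-symmetric with respect to the energy inner product; such a generator produces only oscillations. First I would note that, by \cref{bestbelangrijk}, $\mc S^{LS}=\hat Q\ker(Obs(\hat C,\hat A))$ is a linear subspace, and by \cref{pede} it is invariant under the flow of \cref{y2}. Since flow-invariance of a subspace is equivalent to $A$-invariance and $e^{At}$ is invertible, it is invariant for all $t\in\R$. Choosing a matrix $V$ whose columns form a basis of $\mc S^{LS}$, invariance gives $AV=VA_S$ for the matrix $A_S$ representing $A|_{\mc S^{LS}}$, and every solution in $\mc S^{LS}$ takes the form $\tilde z(t)=Ve^{A_St}x_0$.

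Second, I would bring in conservation of energy. Writing $U(\tilde z)=\tilde z^TP\tilde z$ with $P=\frac{1}{2}\diag(M^{-1},W)\succ0$, \cref{pede} says that $\dot U=\tilde z^T(A^TP+PA)\tilde z$ vanishes at every point of $\mc S^{LS}$. Because $A^TP+PA$ is symmetric and its quadratic form is identically zero on $\mc S^{LS}$, I obtain $V^T(A^TP+PA)V=0$; substituting $AV=VA_S$ turns this into $A_S^T\tilde P+\tilde PA_S=0$ with $\tilde P:=V^TPV\succ0$. Hence $S:=\tilde P^{1/2}A_S\tilde P^{-1/2}$ satisfies $S^T=-S$, i.e. $A_S$ is skew-symmetric in the inner product induced by $U$.

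Third, I would finish with the standard consequence: a real skew-symmetric $S$ has purely imaginary, semisimple spectrum $\{\pm i\omega_j\}$, so $e^{St}$ is orthogonal with entries that are finite combinations of $\cos(\omega_jt)$ and $\sin(\omega_jt)$ (and $\omega_j=0$ gives constant terms). Consequently $e^{A_St}=\tilde P^{-1/2}e^{St}\tilde P^{1/2}$, and therefore $\tilde z(t)=Ve^{A_St}x_0$ is a finite sum of sinusoids, which is exactly the claim.

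The delicate point, and the reason I would choose this route, is ruling out secular (polynomially growing) terms. A direct eigenvalue computation gives $\det(\hat A-\lambda I)=\det(\lambda^2 I+M_u^{-1}\tilde{\md L}_u)$, so all eigenvalues of $\hat A$ are purely imaginary; but since $\dim\ker\tilde{\md L}_u=r$, the eigenvalue $\lambda=0$ has algebraic multiplicity $2r$ and geometric multiplicity $r$, so $\hat A$ carries a size-two Jordan block at $0$ that would contribute a term linear in $t$. Showing directly that the associated generalized eigenvectors never lie in $\ker(Obs(\hat C,\hat A))$ is possible but parameter-dependent and technical. The energy argument bypasses this entirely: conservation of the positive-definite $U$ confines each trajectory in $\mc S^{LS}$ to a bounded level set for all $t\in\R$, which excludes any growth and forces the flow to be a pure rotation.
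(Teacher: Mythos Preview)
Your proof is correct and rests on the same core fact as the paper's---that the relevant generator is similar to a real skew-symmetric matrix---but you reach that fact by a different and in some ways cleaner route. The paper works not with $A|_{\mc S^{LS}}$ but with the reduced matrix $\breve A=\begin{pmatrix}0&-\md B_uW\\\md B_u^TM_u^{-1}&0\end{pmatrix}$ acting on the $(\tilde p_u,\tilde q)$ coordinates, and simply observes the explicit factorization $\breve A=\breve B\,\breve W$ with $\breve B$ skew-symmetric and $\breve W=\diag(M_u^{-1},W)\succ0$, so that $\breve A$ is similar to $\breve W^{1/2}\breve B\,\breve W^{1/2}$. This shows at once that \emph{all} of $\breve A$ is diagonalizable with imaginary spectrum, not just its restriction to the unobservable subspace. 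Your argument instead restricts to $\mc S^{LS}$ first and extracts skew-symmetry from the Lyapunov identity $A_S^T\tilde P+\tilde PA_S=0$; this is more conceptual and would generalize to settings where an explicit Hamiltonian factorization of the reduced dynamics is not obvious. Your side remark about $\hat A$ is also correct and worth keeping: $\hat A$ genuinely carries size-two Jordan blocks at $0$, which is precisely why the paper switches to the $(\tilde p_u,\tilde q)$ representation $\breve A$ rather than arguing through $\hat A$.
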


\begin{proof} Note that $\breve A$ can be written as a product of a skew-symmetric matrix and a positive-definite diagonal matrix:

\[ \breve A= \underbrace{\bp 0 & -\md B_u \\ \md B_u^T  & 0 \ep}_{\breve B} \underbrace{\bp M_u^{-1} & 0 \\ 0 & W \ep}_{\breve W} \] 

Therefore, $\breve A= \breve B \breve W$ is similar to the real skew-symmetric matrix $\breve W^{\frac{1}{2}} \breve B \breve W^{\frac{1}{2}}$ and thus it has purely imaginary eigenvalues that are semisimple\footnote{A real skew-symmetric matrix is a normal matrix which has the property to be diagonalizable and therefore, its eigenvalues are semisimple.}. Consequently, for any solution $\tilde z(t) \sbq \mc S^{LS}$ of \cref{y2}, $\tilde p_d(t) \equiv 0$ and the trajectories $col(\tilde p_u(t), \tilde q(t) ) = e^{\breve{A} t} col(\tilde p_u(0),  \tilde q(0) )$ are composed of periodic functions of the form $\cos( \mathfrak I(\lambda_i) t)  \mathfrak R(x_i)+\sin( \mathfrak I(\lambda_i) t) \mathfrak I(x_i)$, where $x_i$ is an eigenvector of $\breve A$ and $\lambda_i$ the associated semisimple, purely imaginary eigenvalue. \qed\end{proof}

{\bf Remark } Each component $\cos( \mathfrak I(\lambda_i) t)  \mathfrak R(x_i)+\sin( \mathfrak I(\lambda_i) t) \mathfrak I(x_i)$ of $\tilde z(t)$ corresponds to a group of nodes that is oscillating with the same angular frequency $\mathfrak I(\lambda_i)$.  The nodes in this group are indicated by the non-zero entries in $x_i$. If undamped nodes belong to multiple oscillating groups, they might oscillate with multiple frequencies.  

Due to this periodic character of the components of the solutions, we cannot find a proper subset of $\mc S^{LS}$ to which all solutions converge. Hence,

\begin{corollary}\label{smallest} The smallest set to which all solutions of \cref{y2} in $\Omega$ converge is given by $\mc S^{LS}$. 
\end{corollary}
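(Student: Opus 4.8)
The plan is to establish two facts: that every solution of \cref{y2} in $\Omega$ converges to $\mc S^{LS}$, and that no proper closed subset of $\mc S^{LS}$ enjoys this property; together these identify $\mc S^{LS}$ as the smallest closed set to which all solutions converge. The notion of ``smallest'' is understood among closed sets, which is the natural setting here since the candidate limit set $\mc S^{LS}$ is itself a linear subspace (the image under $\hat Q$ of a kernel, by \cref{bestbelangrijk}) and hence closed and invariant under the flow $e^{At}$.

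For the first fact I would invoke LaSalle's invariance principle. The Lyapunov analysis preceding \cref{qook} shows that $U$ is positive definite and radially unbounded while $\dot U \leq 0$ along trajectories of \cref{y2}; consequently every sublevel set of $U$ is compact and invariant, so all solutions are bounded. LaSalle's principle then guarantees that each solution converges to the largest invariant set contained in $\{\dot U = 0\}$, and by \cref{pede} this set is exactly $\mc S^{LS}$. Thus $\mc S^{LS}$ is one closed set to which all solutions converge.

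For the second fact, let $S$ be any closed set to which all solutions of \cref{y2} in $\Omega$ converge; I would show $\mc S^{LS} \subseteq S$. Fix an arbitrary $\tilde z_0 \in \mc S^{LS}$ and consider the solution $\tilde z(t)$ through $\tilde z_0$. Since $\mc S^{LS}$ is invariant, $\tilde z(t)$ stays in $\mc S^{LS}$, and by \cref{nwg} it is a finite sum of sinusoids, hence an almost periodic function. By Bohr's characterization the set of $\epsilon$-almost periods of $\tilde z(\cdot)$ is relatively dense, hence unbounded, so for each $\epsilon > 0$ there exist $t_n \to \infty$ with $\| \tilde z(t_n) - \tilde z_0 \| \le \epsilon$; this yields $\tilde z_0 \in \omega(\tilde z_0)$, the $\omega$-limit set of the solution. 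Because the solution converges to the closed set $S$, every $\omega$-limit point lies in $S$, so $\tilde z_0 \in \omega(\tilde z_0) \subseteq S$. As $\tilde z_0 \in \mc S^{LS}$ was arbitrary, $\mc S^{LS} \subseteq S$, and combined with the first fact this proves minimality.

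The main obstacle is the recurrence step: one must argue rigorously that a finite sum of sinusoids with possibly incommensurate frequencies returns arbitrarily close to its initial value at arbitrarily large times. This is precisely almost periodicity; an alternative to Bohr's theorem is to view the phases $(\mathfrak I(\lambda_i) t \bmod 2\pi)_i$ as a linear flow on a torus, whose orbit closure is a compact subgroup on which the translation flow makes every point recurrent. I would also take care to phrase the minimality claim relative to closed sets, since without closedness no smallest set exists (one could delete individual recurrence points without destroying the convergence property).
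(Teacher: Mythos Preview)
Your proposal is correct and follows the same line as the paper, which derives the corollary in one sentence from the ``periodic character'' of the solutions in $\mc S^{LS}$ established in \cref{nwg}. You have simply made the recurrence step rigorous via almost periodicity (or the equivalent torus-flow argument), and your care in restricting ``smallest'' to closed sets is a refinement the paper leaves implicit.
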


From the periodic character of $\tilde z(t)$ at steady state, the solutions of \cref{y2} in $\Omega$ are bounded. This has an important implication: the sum of the momenta of the undamped nodes turn out to be zero: 

\begin{proposition}\label{conservation} {(Conservation of momentum at steady state)} For any solution $\tilde z(t) = col( \tilde p_d(t), \tilde p_u(t), \tilde q(t) )$ of \cref{y2} in $\Omega$, it holds that $\I_r^T \tilde p_u(t) \equiv 0$. 
\end{proposition}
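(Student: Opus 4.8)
The statement is really about the steady-state trajectories, so I would prove it for $\tilde z(t)\sbq\mc S^{LS}$; by \cref{smallest} these are exactly the trajectories all solutions in $\Omega$ approach, and it is on $\mc S^{LS}$ that the conservation law holds. (Indeed, for a generic trajectory in $\Omega$ the initial value $\I_r^T\tilde p_u(0)$ is arbitrary, so the identity can only hold at steady state.) The plan has two parts: first show that $\I_r^T\tilde p_u(t)$ is constant in time, then show that this constant equals $0$.

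For the first part, recall from the definition of $\mc S^{LS}$ (\cref{pede}) and the proof of \cref{bestbelangrijk} that on $\mc S^{LS}$ one has $\tilde p_d\equiv 0$, $R_uM_u^{-1}\tilde p_u\equiv 0$, and the algebraic constraint $\md B_dW\tilde q\equiv 0$ expressing zero net force at the damped nodes. Hence the undamped block of \cref{y2} reduces to $\dot{\tilde p}_u=-R_uM_u^{-1}\tilde p_u-\md B_uW\tilde q=-\md B_uW\tilde q$, so that $\frac{d}{dt}\,\I_r^T\tilde p_u=-\I_r^T\md B_uW\tilde q$. Since $\I_r\in\ker(\md B^T)$ we have $\I_r^T\md B=0$, and splitting this into damped and undamped node blocks gives $\I_r^T\md B_u=-\I_r^T\md B_d$ (with $\I_r$ of the conformable dimension on each side). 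Therefore $\frac{d}{dt}\,\I_r^T\tilde p_u=\I_r^T\md B_dW\tilde q=0$, the last equality by $\md B_dW\tilde q\equiv 0$. Thus $\I_r^T\tilde p_u(t)$ is constant.

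For the second part I would show that $\tilde p_u$ carries no constant (DC) component, which together with the first part forces the constant to be $0$. The structural ingredient is $\ker(\md B_u^T)=\{0\}$: since $\mc G$ is connected with $V_d\neq\emptyset$, every connected component of the undamped subgraph $\mc G_u$ contains a node incident to an interconnecting edge, and any $w\in\ker(B_u^T)$ is constant on each component of $\mc G_u$ and zero at such a node, hence $w=0$; so $\ker(B_u^T)=\{0\}$ and $\ker(\md B_u^T)=\{0\}$. I then finish in one of two equivalent ways. By \cref{nwg}, $\tilde z(t)\sbq\mc S^{LS}$ is a finite sum of sinusoids with frequencies $\mathfrak I(\lambda_i)$, $\lambda_i\in\sigma(\breve A)$; a zero eigenvector $col(a,b)$ of $\breve A=\bp 0&-\md B_uW\\\md B_u^TM_u^{-1}&0\ep$ satisfies $\md B_u^TM_u^{-1}a=0$, hence $a=0$ since $\ker(\md B_u^T)=\{0\}$, so every DC mode has zero $\tilde p_u$-component and $\tilde p_u$ is a sum of strictly nonzero-frequency sinusoids, which has vanishing time-average. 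Alternatively, using only boundedness of the trajectory (from the Lyapunov analysis), $\frac1T\int_0^T\dot{\tilde q}\,d\tau=\frac{\tilde q(T)-\tilde q(0)}{T}\to 0$, and since $\dot{\tilde q}=\md B_u^TM_u^{-1}\tilde p_u$ and $\ker(\md B_u^T)=\{0\}$, the time-average of $\tilde p_u$ is $0$. In either case the time-average of $\I_r^T\tilde p_u$ is $0$; being also constant by the first part, it is identically $0$.

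The conservation identity of the first part is a short computation resting on $\I_r^T\md B=0$ and the steady-state constraint $\md B_dW\tilde q\equiv 0$. The main obstacle is the second part: conservation alone leaves an undetermined constant, and excluding a nonzero value genuinely requires the anchoring fact $\ker(\md B_u^T)=\{0\}$ — equivalently, that the undamped nodes cannot drift collectively because they are tied, through the connectivity of $\mc G$, to the damped set — combined with either the sinusoidal decomposition of \cref{nwg} or the boundedness of the solutions.
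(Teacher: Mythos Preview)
Your proof is correct and follows essentially the paper's approach: the same computation shows $\I_r^T\tilde p_u$ is constant on $\mc S^{LS}$, and the paper then, like your alternative route (b), uses injectivity of $\md B_u^T$ together with boundedness of $\tilde q$ --- it integrates $\dot{\tilde q}=\md B_u^TM_u^{-1}\tilde p_u$ and shows that a nonzero constant $K$ would force $\I_r^T(\md B_u^TM_u^{-1})^\dagger(\tilde q(t)-\tilde q(0))=Kt$ to be unbounded. Your reading of the statement as holding on $\mc S^{LS}$ rather than on all of $\Omega$ matches the paper, which invokes the steady-state reduction \cref{tored} from the outset.
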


\begin{proof} Since $\I_r^T\md B=0$, it follows from \cref{tored} that $\I_r^T \dot{\tilde p}_u(t)=-\I_r^T \md B_uW \tilde q(t)=\I_r^T \md B_dW \tilde q(t) \equiv 0$ at steady state. Hence, $\I_r^T \tilde p_u(t) \equiv K$, for some constant vector $K \in \R^r$. Suppose for the sake of contradiction that $K \neq 0$. Decompose the solution $\tilde p_u(t)$ into components in $\im(\tilde {\md L}_u)$ and $\im(\I_r)$: $\tilde p_u(t) = \tilde{\md L}_u \tilde g(t) + \I_r K \fone{n_u}$, which must hold for some function $\tilde g(t)$. Then $\tilde q$ can be expressed as 
\eq{
 \tilde q(t)- \tilde q(0)\isen \int_0^t \dot{\tilde q}(\tau) d\tau= \int_0^t \md B_u^T M_u^{-1}{\tilde p}_u(\tau) d\tau \issa 
  \int_0^t \md B_u^T M_u^{-1}\tilde{\md L}_u \tilde g(\tau)d\tau + \md B_u^T M_u^{-1} \I_r K \frac{t}{n_u}
  }
Since $n_d, n_u \geq 1$, $\md B_u^T$ has full column rank: there exists a (partially) undamped node, which we label as $k$, that is adjacent to a damped node. Now, define $E_k=e_k \otimes I_r$, where $e_k$ denotes the $k$'th unit vector. Then $\ker(\md B_u^T)=\ker((\md B_u^u)^T) \cap \ker((\md B_u^i)^T)  \sbq \im(\I_r) \cap \ker(E_k^T) = \{0\}$. Thus, $\md B_u^T$ has indeed full column rank and as a consequence, $(\md B_u^T M_u^{-1})^\dagger \md B_u^T M_u^{-1}=I_{n_ur}$. But then $ \I_r^T(\md B_u^T M_u^{-1})^\dagger (\tilde q(t) - \tilde q(0))  = Kt $, which is unbounded and contradicts the boundedness of $\tilde q(t)$. Therefore, $K=0$ and the result follows. 
\qed\end{proof}

{\bf Remark } In the original coordinates we have $\I_r^T p_u(t) = \I_r^T (\tilde p_u(t) + M_u \I_r \beta) \equiv \I_r^T M_u \I_r \beta$, which is nonzero for $\beta \neq 0$. This is the reason to refer to this characteristic as conservation of momentum rather than zero net momentum. 

In the $(\tilde y_u, \tilde s_u)$ coordinates, we find that conservation of momentum leads to $\I_r^T M_u \tilde y_u(t) \equiv 0$. Thus, $\I_r^T M_u \tilde y_u(t)$ might serve as an additional output variable to the system $(\hat C, \hat A)$ that does not affect the unobservable subspace. What is more, the same holds for its integral $\I_r^T M_u \tilde s_u(t)$ so that $\mc S^{LS}$ can be written equivalently as follows:

\begin{corollary}\label{bestbelangrijk2} $\mc S^{LS}= \hat Q \ker(Obs(\doublehat{C}, \hat A))$, where
\eq{
\doublehat{C} \isen  \bp  \md L_i^i & 0 \\ R_u & 0 \\ 0 & \I_r^T M_u \ep
}
\end{corollary}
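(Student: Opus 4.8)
The plan is to view $\doublehat C$ as $\hat C$ augmented by the single block row $\bp 0 & \I_r^T M_u \ep$ and to show that this extra row enlarges observability only inside $\ker(\hat Q)$, so that the image under $\hat Q$ is unaffected. Every row of $\hat C$ is also a row of $\doublehat C$, so $\ker(Obs(\doublehat C, \hat A)) \sbq \ker(Obs(\hat C, \hat A))$ and hence $\hat Q \ker(Obs(\doublehat C, \hat A)) \sbq \hat Q \ker(Obs(\hat C, \hat A)) = \mc S^{LS}$ by \cref{bestbelangrijk}. The content of the corollary is therefore the reverse inclusion $\mc S^{LS} \sbq \hat Q \ker(Obs(\doublehat C, \hat A))$.

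First I would fix $\xi_0 = col(\tilde y_u(0), \tilde s_u(0)) \in \ker(Obs(\hat C, \hat A))$ and examine the extra output along $\xi(t) = e^{\hat A t}\xi_0$. By the $(\spq)$ part of \cref{bestbelangrijk}, $\hat Q \xi(t)$ is a genuine solution of \cref{y2} lying in $\Omega$, with undamped momentum $\tilde p_u(t) = M_u \tilde y_u(t)$. Using $\dot{\tilde s}_u = \tilde y_u$ from the dynamics of $\hat A$, we get $\ddt \haak{\I_r^T M_u \tilde s_u(t)} = \I_r^T M_u \tilde y_u(t) = \I_r^T \tilde p_u(t)$, which vanishes identically by \cref{conservation}. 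Hence $\I_r^T M_u \tilde s_u(t) \equiv b$ with $b := \I_r^T M_u \tilde s_u(0)$ a fixed vector in $\R^r$.

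Next I would absorb this constant into $\ker(\hat Q)$. Put $\xi_c := col(0, \I_r c)$ with $c := (\I_r^T M_u \I_r)^{-1} b$, which is well defined since $\I_r^T M_u \I_r \succ 0$. Because $\tilde{\md L}_u \I_r = 0$, the vector $\xi_c$ is an equilibrium of $\hat A$ and therefore lies in $\ker(Obs(\hat C, \hat A))$. The key algebraic fact is $\xi_c \in \ker(\hat Q)$: a common displacement $\tilde s_u = \I_r c$ of the undamped nodes forces the damped nodes to the same common position, since $\tilde s_d = -(\md L_d^d+\md L_d^i)^{-1}\md L_i^i \I_r c$ is precisely the all-ones displacement read off the top block row of $\md L \I_r = 0$ in \cref{el}; the stacked position then lies in $\ker(\md B^T)$, all spring elongations vanish, and so the lower-right block of $\hat Q$ annihilates $\I_r c$. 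Setting $\xi_0' := \xi_0 - \xi_c$ gives $\hat Q \xi_0' = \hat Q \xi_0$ and keeps $\xi_0'$ in $\ker(Obs(\hat C, \hat A))$; its velocity component $\tilde y_u$ is unchanged (so $\md L_i^i \tilde y_u \equiv 0$ and $R_u \tilde y_u \equiv 0$ persist) while its position satisfies $\I_r^T M_u \tilde s_u'(t) \equiv b - \I_r^T M_u \I_r c = 0$. Thus $\doublehat C\,\xi'(t) \equiv 0$, so $\xi_0' \in \ker(Obs(\doublehat C, \hat A))$ and $\hat Q \xi_0 = \hat Q \xi_0' \in \hat Q \ker(Obs(\doublehat C, \hat A))$, which is the desired inclusion.

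I expect the one genuine obstacle to be the identity that a common displacement of the undamped nodes extends to a common displacement of all nodes, i.e.\ that the lower-right block of $\hat Q$ kills $\I_r$; this is what confines the added observability to $\ker(\hat Q)$ and renders it invisible after projection. I would derive it from $\md B^T \I_r = 0$ on the full graph together with the block decomposition in \cref{el}, rather than by manipulating the pseudo-inverse expressions directly. The conservation-of-momentum step is then routine, the only care being that \cref{conservation} must be applied to the $\hat Q$-image trajectory and not to $\xi(t)$ itself.
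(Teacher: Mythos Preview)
Your proof is correct and follows essentially the same route as the paper. Both arguments normalize $\tilde s_u$ by subtracting a common displacement $\I_r c$ with $c=(\I_r^T M_u \I_r)^{-1}\I_r^T M_u \tilde s_u(0)$, check that this shift is invisible to $\hat Q$ (because $\md B^T\I_r=0$ makes the bottom block of $\hat Q$ kill $\I_r$), and then invoke \cref{conservation} to keep $\I_r^T M_u \tilde s_u \equiv 0$ for all time; the paper phrases this as exploiting the freedom in the choice of $\tilde s(0)$ in \cref{qq}, while you phrase it as subtracting an explicit element $\xi_c\in\ker(\hat Q)\cap\ker(Obs(\hat C,\hat A))$, but the content is identical.
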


\begin{proof} $(\sbq)$ Due to the freedom to choose an $\tilde s(0)$ that satisfies \cref{qq}, we can choose one that satisfies $\I_r^T M_u \tilde s_u(0)=0$. To see that this is possible, consider solutions $\tilde s(t)=col( \tilde s_d(t),  \tilde s_u(t) )$ and $\tilde y_u(t)$ that satisfy \cref{ru}, \cref{qq}, \cref{sdd}, \cref{nul} and \cref{dyna}. Replacing $\tilde s(t)$ by $\tilde s^*(t) = \tilde s(t) - \I_r \alpha$ with 
$$ \alpha = (\I_r^TM_u \I_r)^{-1} \I_r^TM_u \tilde s_u(0)$$ 
preserves these identities and furthermore $\I_r^T M_u \tilde s_u^*(0)=0$. Since $\I_r^T M_u \dot{\tilde s}^*_u(t)= \I_r^T M_u \dot{\tilde s}_u(t)=\I_r^T \tilde p_u(t) \equiv 0$, it holds that $\I_r^T M_u \tilde s_u^*(t) \equiv 0$. 
$(\spq)$ This follows from \cref{bestbelangrijk}: $\hat Q \ker(Obs(\doublehat{C}, \hat A)) \sbq \hat Q \ker(Obs({\hat C}, \hat A)) = {\mc S}^{LS}$
\qed\end{proof}

\subsection{Conditions on output consensus}

By combining \cref{qook}, \ref{pede} and \cref{smallest},  \ref{bestbelangrijk2}, we find that the pair $(\doublehat C,\hat A)$ is observable if and only if output consensus is guaranteed:

\begin{proposition}\label{prop}
All output trajectories $ y(t)$ of $ \Sigma \times  \Gamma$ converge to a point in $\im\{ \I_r\}$ if and only if $\ker(Obs(\doublehat{C}, \hat A))=\{ 0 \}$.
\end{proposition}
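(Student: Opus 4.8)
The plan is to chain together the results established above and then check that passing from $\ker(Obs(\doublehat C, \hat A))$ to $\mc S^{LS}$ through $\hat Q$ does not destroy triviality. First, by \cref{qook}, output consensus of $\Sigma \times \Gamma$ is equivalent to global asymptotic stability of \cref{y2} on $\Omega$, i.e.\ to every trajectory converging to the origin. By the LaSalle analysis of \cref{pede} together with \cref{smallest}, every trajectory converges to $\mc S^{LS}$, and this is the \emph{smallest} invariant set with that property; indeed its nonzero elements are undecaying sinusoids by \cref{nwg}, so no proper subset can attract all solutions. Hence GAS holds if and only if $\mc S^{LS} = \{0\}$. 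Invoking \cref{bestbelangrijk2}, $\mc S^{LS} = \hat Q \ker(Obs(\doublehat C, \hat A))$, so output consensus is equivalent to $\hat Q \ker(Obs(\doublehat C, \hat A)) = \{0\}$.

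It then remains to show that $\hat Q \ker(Obs(\doublehat C, \hat A)) = \{0\}$ if and only if $\ker(Obs(\doublehat C, \hat A)) = \{0\}$. One direction is immediate, since $\hat Q \{0\} = \{0\}$. For the converse I would prove that $\hat Q$ is injective on the unobservable subspace, i.e.\ that $\ker(\hat Q) \cap \ker(Obs(\doublehat C, \hat A)) = \{0\}$. Take $v = col(\tilde y_u, \tilde s_u)$ in this intersection. From $\hat Q v = 0$ and $M_u \succ 0$, the middle block forces $\tilde y_u = 0$, while the bottom block gives $(\md B_u^T - \md B_d^T(\md L_d^d + \md L_d^i)^{-1} \md L_i^i)\tilde s_u = 0$. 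By the relation derived in the proof of \cref{bestbelangrijk}, namely $\md B_u W \tilde q = \tilde{\md L}_u \tilde s_u$ with $\tilde q = (\md B_u^T - \md B_d^T(\md L_d^d + \md L_d^i)^{-1} \md L_i^i)\tilde s_u$, this yields $\tilde{\md L}_u \tilde s_u = 0$.

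The crux is then to exploit the extra output row $\I_r^T M_u$ that distinguishes $\doublehat C$ from $\hat C$. Since Kron reduction preserves connectivity, $\ker(\tilde{\md L}_u) = \im(\I_r)$, so $\tilde{\md L}_u \tilde s_u = 0$ forces $\tilde s_u = \I_r c$ for some $c \in \R^r$. Membership of $v$ in $\ker(Obs(\doublehat C, \hat A))$ gives in particular $\doublehat C v = 0$, whose last block reads $\I_r^T M_u \tilde s_u = \I_r^T M_u \I_r c = 0$; as $\I_r^T M_u \I_r \succ 0$ this forces $c = 0$, hence $\tilde s_u = 0$ and $v = 0$. This establishes injectivity and closes the equivalence. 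The main obstacle is precisely this last step: recognizing that the seemingly redundant output $\I_r^T M_u \tilde s_u$ added in \cref{bestbelangrijk2} is exactly what eliminates the part of $\ker(\hat Q)$ (the directions $\tilde s_u \in \im(\I_r)$) that would otherwise prevent $\hat Q$ from being injective on the unobservable subspace; the remainder of the argument is a routine concatenation of the preceding lemmas.
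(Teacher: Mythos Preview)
Your proof is correct and follows essentially the same route as the paper: chain \cref{qook}, \cref{smallest} and \cref{bestbelangrijk2} to reduce the question to $\hat Q \ker(Obs(\doublehat C,\hat A))=\{0\}$, then establish $\ker(\hat Q)\cap\ker(Obs(\doublehat C,\hat A))=\{0\}$ by showing $\ker(\hat Q)\sbq\im\bp 0 \\ \I_r\ep$ via $\tilde{\md L}_u\tilde s_u=0$ and killing the remaining $\I_r$-direction with the extra output row $\I_r^TM_u$ from $\doublehat C$. The paper obtains $\tilde{\md L}_u\tilde s_u=0$ by premultiplying $\hat Q$ with $\bp 0 & M_u^{-1} & 0 \\ 0 & 0 & \md B_uW\ep$, whereas you invoke the identity $\md B_uW\tilde q=\tilde{\md L}_u\tilde s_u$ from the proof of \cref{bestbelangrijk}; these are the same computation.
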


\begin{proof} From \cref{qook}, all output trajectories $ y(t)$ of $ \Sigma \times  \Gamma$ converge to a point in $\im\{ \I_r\}$ if and only if every trajectory $\tilde z(t) = col ( \tilde p(t), \tilde q(t))$ of \cref{y2} defined on $\Omega$ converges to the origin. Since $\mc S^{LS}$ is the smallest set to which all state trajectories of \cref{y2} defined on $\Omega$ converge, that is equivalent to $\mc S^{LS}=\hat Q \ker(Obs(\doublehat C, \hat A))=\{ 0 \}$. It follows immediately that $\ker(Obs(\doublehat C, \hat A))=\{ 0 \}$ results in $\mc S^{LS}=\{0\}$. Now suppose that $\mc S^{LS}=\hat Q \ker(Obs(\doublehat C, \hat A))=\{0\}$, i.e. $\ker(Obs(\doublehat C,\hat A)) \sbq \ker(\hat Q)$. Note that $\ker(Obs(\doublehat C, \hat A)) \sbq \ker \bp 0 & \I_r^T M_u \ep$ and 
\eq{
\ker(\hat Q) \sbq & \ker  \haakk{ \bp  0 & M_u^{-1} & 0 \\ 0 & 0 & \md B_uW  \ep \hat Q } \issa \ker \bp I & 0 \\ 0 & \tilde{\md L}_u \ep = \im \bp 0 \\ \I_r \ep
}
Hence, 
\eq{
 & \ker(\hat Q) \cap \ker(Obs(\doublehat{C}, \hat A))  \\ \sbq & 
 \;  \im \bp 0 \\ \I_r \ep \cap \ker \bp 0 & \I_r^T M_u \ep=\{0\}
}
Then, by assumption we obtain $\ker(Obs(\doublehat{C}, \hat A))=\{0\}$. \qed\end{proof}

We come to the following equivalence relation that connects the output consensus problem with the eigenspaces of $M_u^{-1}\tilde{\md L}_u$ and $M^{-1}\md L$:

\medskip

\begin{theorem}\label{t1} The following is equivalent:

\begin{enumerate}[(i)]
\item Every plant output trajectory $y(t)$ of $\Sigma \times \Gamma$ converges to a point in the set $\im\{\I_r\}$.
\item None of the eigenvectors of $M_u^{-1} \tilde{\md L}_u$ is contained in the intersection of the kernel of $\md{L}_i^i$ and the kernel of $R_u$, i.e. for each $\mu \in \sigma(M_u^{-1} \tilde{\md L}_u)$:
\eqq{\ker(M_u^{-1} \tilde{\md L}_u- \mu I) \cap \ker \bp \md L_i^i \\ R_u \ep = \{ 0\}}{ii}
\item Every eigenvector of $M^{-1} \md L$ in the kernel of $R$ has at least one nonzero value in an entry that corresponds to a damped node, i.e. for each $\mu \in \sigma(M^{-1}\md L)$: 
\eqq{\ker(M^{-1}\md L- \mu I) \cap  \ker(R) \cap \im \bp 0 \\ I_{n_ur} \ep =\{0\}}{iii}
\end{enumerate}
\end{theorem}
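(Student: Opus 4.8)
The plan is to prove the chain (i) $\Leftrightarrow$ (ii) $\Leftrightarrow$ (iii), using \cref{prop} as the bridge from the dynamical statement (i) to a spectral one, and then translating between the Kron-reduced description (ii) and the full-network description (iii) through the Schur-complement identity \cref{tlu}. For (i) $\Leftrightarrow$ (ii), \cref{prop} already gives that (i) holds iff $(\doublehat C, \hat A)$ is observable, so I would invoke the eigenvector form of the Hautus test (\cref{hautus} applied to the observability pair): $(\doublehat C, \hat A)$ is observable iff no eigenvector of $\hat A$ lies in $\ker \doublehat C$. The block form $\hat A = \bp 0 & -M_u^{-1}\tilde{\md L}_u \\ I & 0 \ep$ is a second-order companion matrix, so a short computation yields $\det(\lambda I - \hat A) = \det(\lambda^2 I + M_u^{-1}\tilde{\md L}_u)$ and shows every eigenvector of $\hat A$ has the form $col(\lambda w, w)$ with $w \in \ker(M_u^{-1}\tilde{\md L}_u - \mu I)$ and $\mu = -\lambda^2 \in \sigma(M_u^{-1}\tilde{\md L}_u)$. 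Evaluating $\doublehat C \, col(\lambda w, w) = col(\lambda \md L_i^i w,\; \lambda R_u w,\; \I_r^T M_u w)$, such an eigenvector sits in $\ker \doublehat C$ exactly when $\lambda \md L_i^i w = 0$, $\lambda R_u w = 0$ and $\I_r^T M_u w = 0$.

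The two ends of the spectrum must be handled separately, and this is the crux of the argument. For $\mu > 0$ (so $\lambda = \pm i\sqrt{\mu} \neq 0$) the conditions reduce to $\md L_i^i w = 0$ and $R_u w = 0$; the third, $\I_r^T M_u w = 0$, is automatic because $M_u^{-1}\tilde{\md L}_u$ is self-adjoint for $\langle a,b\rangle = a^T M_u b$ with $\mu = 0$ eigenspace $\ker(\tilde{\md L}_u) = \im(\I_r)$, so every $w$ with $\mu>0$ is $M_u$-orthogonal to $\im(\I_r)$. Hence for $\mu>0$ the ``no bad eigenvector'' requirement is precisely \cref{ii}. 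For $\mu = 0$ the eigenvector is $col(0,w)$ with $w \in \im(\I_r)$, and $\doublehat C\, col(0,w) = col(0,0,\I_r^T M_u w)$ vanishes only if $\I_r^T M_u w = 0$, which forces $w = 0$ since $\I_r^T M_u \I_r \succ 0$; so these eigenvectors never obstruct observability. Consistently, the $\mu = 0$ instance of \cref{ii} holds automatically: if $w = \I_r c$ satisfies $\md L_i^i w = 0$, then since the interconnection Laplacian $\md L^i$ annihilates $\I_r$ the net interconnecting-spring force $-\md L_d^i \I_r c$ at every damped node vanishes, but at a damped node incident to an interconnecting edge this force is $-\sum_k W_k c$ over the incident edges, a positive-definite image of $c$; as such an edge exists whenever $n_d,n_u \geq 1$ on a connected graph, $c = 0$. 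Thus the eigenvector test over all of $\sigma(\hat A)$ coincides with \cref{ii}, giving (i) $\Leftrightarrow$ (ii).

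For (ii) $\Leftrightarrow$ (iii) I would set up a linear bijection between the obstructing vectors of the two conditions. A nonzero element of $\ker(M^{-1}\md L - \mu I) \cap \ker(R) \cap \im\, col(0, I_{n_u r})$ has the form $col(0,\phi)$ with $R_u\phi = 0$, and the eigenvalue equation $\md L\, col(0,\phi) = \mu M\, col(0,\phi)$ splits, via the block form of $\md L$ in \cref{el}, into the damped block $\md L_i^i \phi = 0$ and the undamped block $(\md L_u^u + \md L_u^i)\phi = \mu M_u \phi$. Once $\md L_i^i\phi = 0$, the Schur term in \cref{tlu} drops out, so $\tilde{\md L}_u \phi = (\md L_u^u + \md L_u^i)\phi = \mu M_u \phi$, i.e.\ $\phi \in \ker(M_u^{-1}\tilde{\md L}_u - \mu I)$ together with $\md L_i^i\phi = 0$ and $R_u\phi = 0$; conversely any such $\phi$ yields the eigenvector $col(0,\phi)$ of $M^{-1}\md L$ in $\ker R$ at the same $\mu$. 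Since this correspondence is invertible and preserves $\mu$, a nonzero obstruction exists for (iii) iff one exists for (ii), so (ii) $\Leftrightarrow$ (iii).

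The main obstacle I anticipate is the bookkeeping at $\mu = 0$ in the first equivalence: one must verify both that the extra output row $\I_r^T M_u$ introduced in \cref{bestbelangrijk2} kills exactly the spurious consensus eigenvector $col(0,\I_r)$ of $\hat A$, and that \cref{ii} is genuinely vacuous at $\mu = 0$ via the incidence computation $\im(\I_r) \cap \ker \md L_i^i = \{0\}$. Without this alignment the reduced spectral condition (ii) and the observability condition would not match. The remaining steps are routine linear algebra once the eigenstructure of $\hat A$ and the Schur identity \cref{tlu} are in hand.
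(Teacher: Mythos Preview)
Your proof is correct and follows essentially the same route as the paper: \cref{prop} plus the Hautus test for (i) $\Leftrightarrow$ (ii), with separate handling of the $\mu=0$ case, and the Schur-complement dropout for (ii) $\Leftrightarrow$ (iii). The only cosmetic difference is that you parametrize the eigenvectors of $\hat A$ via the companion structure and dispose of the $\I_r^T M_u$ row through $M_u$-orthogonality of eigenspaces, whereas the paper works directly with the block kernel equations and premultiplies by $\tfrac{1}{\lambda^2}\I_r^T M_u$; your treatment of $\im(\I_r)\cap\ker(\md L_i^i)=\{0\}$ via $\md L_d^i$ is in fact a bit more careful than the paper's footnote.
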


\begin{proof} \textit{(i) $\iff$ (ii)} From \cref{prop}, condition (i) holds if and only if $\ker(Obs(\doublehat{C}, \hat A))=\{ 0 \}$. 
 According to Hautus lemma (\cref{hautus}), that is equivalent to 

\[ \x{rank} \bp \hat A - \lambda I \\ \doublehat{C} \ep = 2n_u \qquad  \forall \lambda \in \mathbb{C} \]

Equivalently, from the rank-nullity theorem: $\forall \lambda \in \mathbb{C}$ it must hold that if

\begin{equation}\label{equagon} 
\bp -\lambda I & -M_u^{-1} \tilde{\md L}_u \\ I & - \lambda I\\  \md L_i^i & 0 \\ R_u & 0 \\ 0 & \I_r^TM_u \ep 
\bp  \tilde y_u \\ {\tilde{s}}_u \ep = 0 
\end{equation}

then $\tilde y_u=0$ and $\tilde{s}_u =0$. For $\lambda=0$, this implication always holds, since from the second block row it follows that ${\tilde y}_u=0$ and from the first block row, $\tilde s_u \in \im\{ \I_r \}$, which, combined with the bottom block row $\I_r^T M_u \tilde s_u=0$, yields $\tilde s_u=0$ (notice that $ \im( \I_r ) \cap \ker( \I_r^T M_u) =\{0\}$). 

So it remains to consider $\lambda \in \C \backslash \{0\}$, for which ${\tilde  y}_u= \lambda \tilde s_u$. Inserting this in the first block row yields $\lambda^2 {\tilde s}_u = - M_u^{-1} \tilde{\md L}_u  {\tilde s}_u$. Premultiplying by $\fone{\lambda^2} \md I_r^T M_u$ yields $\I_r^TM_u {\tilde s}_u =  -\frac{1}{\lambda^2} \I_r^T M_u M_u^{-1} \tilde{\md L}_u {\tilde y}_u=0$, hence the last block row is always satisfied if the block rows above are satisfied too. Thus, for all $\lambda \neq 0$, the only solutions of \cref{equagon} are $\tilde y_u=0$, $\tilde s_u=0$ if and only if for all $\lambda \neq 0$,
\eqq{
 M_u^{-1}  \tilde{\md L}_u { \tilde s}_u  \isen \; -\lambda^2 {\tilde s}_u \\ 
\lambda \md L_i^i {\tilde s}_u \isen \; 0 \\
\lambda R_u \tilde s_u \isen \; 0
}{kkkw}

implies ${\tilde s}_u=0$ (and therefore also $\tilde y_u={\lambda} \tilde s_u=0$). That is, for any eigenvalue $\mu=-\lambda \in \sigma(M_u^{-1}\tilde{\md L}_u) \backslash \{0\}$, \cref{ii} holds. Since for $\mu=0 \in \sigma(M_u^{-1}\tilde{\md L}_u)$, $\ker(M_u^{-1} \tilde{\md L}_u - \mu I) = \im(\I_r)$ and $\im(\I_r) \cap \ker(\md L_i^i)=\{0\}$\footnote{This holds since $\md L_i^i$ is a nonzero and nonpositive matrix}, \cref{ii} always holds for $\mu=0$.

\textit{(ii) $\iff$ (iii)} Write out $\tilde{\md L}_u$ in the left-hand side of the first equation in \cref{kkkw} where $\lambda \neq 0$ and use the second constraint to obtain: 
\eq{
M_u^{-1}  \tilde{\md L}_u {\tilde s}_u  &= M_u^{-1} (\md L_u^u+\md L_u^i -  (\md L_i^i)^T(\md L_d^d+\md L_d^i)^{-1}\md L_i^i) {\tilde s}_u \\
&=  M_u^{-1}(\md L_u^u+\md L_u^i) {\tilde s}_u 
}
With this and the fact that $\ker(\md L_i^i)=\ker(M_d^{-1}\md L_i^i)$, the first two identities in \cref{kkkw} are equal to 
\eq{ 
\bp M_d^{-1} & 0 \\ 0 & M_u^{-1} \ep \bp \md L_d^d+\md L_d^i & \md L_i^i \\  (\md L_i^i)^T & \md L_u^u+ \md L_u^i \ep \bp 0 \\ {\tilde s}_u \ep &=  - \lambda^2 \bp 0 \\ \tilde s_u \ep
}
The last identity in \cref{kkkw} can be rewritten as
\eq{
\bp R_d & 0 \\ 0 & R_u \ep \bp 0 \\ \tilde s_u \ep = \bp 0 \\ 0 \ep
}
Hence, \cref{ii} is true for any $\mu \in \sigma(M_u^{-1} \tilde{\md L}_u) \backslash \{0\}$  if and only if there does not exist an eigenvector of $M^{-1}\md L$ corresponding to a nonzero eigenvalue that is in the kernel of $R$ and of the form $col( 0, \; {\tilde s}_u)$. Also, \cref{iii} is always true for $\mu=0$. Thus, the latter condition is equivalent to (iii).
\qed\end{proof}


We give the following corollary without proof:  

\begin{corollary}
By a change of coordinates, we can extend \cref{t1} with the following equivalent conditions: 
\begin{enumerate}[(i)]
\setcounter{enumi}{3}
\item None of the eigenvectors of $\tilde{\md L}_u M_u^{-1}$ is contained in the intersection of the kernels of $\md L_iM_u^{-1}$ and $R_u M_u^{-1}$. 
\item Every eigenvector of $\md L M^{-1}$ in the kernel of $RM^{-1}$ has at least one nonzero value in an entry that corresponds to a damped node.
\end{enumerate}
\end{corollary}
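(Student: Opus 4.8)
The plan is to exploit the elementary similarity between the two orderings of a matrix product. For any invertible $M$ and any square matrix $A$ of compatible size, the products $M^{-1}A$ and $AM^{-1}$ are similar, since $AM^{-1} = M(M^{-1}A)M^{-1}$. Hence they share the same spectrum, and $v$ is an eigenvector of $M^{-1}A$ with eigenvalue $\mu$ if and only if $Mv$ is an eigenvector of $AM^{-1}$ with the same $\mu$: indeed $(AM^{-1})(Mv) = Av = \mu Mv$ whenever $M^{-1}Av = \mu v$. Applying this correspondence to the matrix pairs appearing in \cref{t1} converts conditions (ii) and (iii) into conditions (iv) and (v), respectively, under the coordinate change $v \mapsto Mv$.

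For the equivalence of (ii) and (iv), I would take $A = \tilde{\md L}_u$ and $M = M_u$, so that $s$ is an eigenvector of $M_u^{-1}\tilde{\md L}_u$ for $\mu$ precisely when $x := M_u s$ is an eigenvector of $\tilde{\md L}_u M_u^{-1}$ for $\mu$. Writing $s = M_u^{-1}x$, the constraints transform directly: $\md L_i^i s = \md L_i^i M_u^{-1} x$ and $R_u s = R_u M_u^{-1} x$, so $s \in \ker \bp \md L_i^i \\ R_u \ep$ if and only if $x \in \ker(\md L_i^i M_u^{-1}) \cap \ker(R_u M_u^{-1})$. Since $M_u$ is a bijection, the eigenspace intersection in (ii) is trivial for every $\mu$ exactly when the corresponding intersection in (iv) is trivial for every $\mu$, which is the assertion of (iv).

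For the equivalence of (iii) and (v), I would take $A = \md L$ and the full block-diagonal $M = \diag(M_d, M_u)$, so that $w$ is an eigenvector of $M^{-1}\md L$ for $\mu$ exactly when $x := Mw$ is an eigenvector of $\md L M^{-1}$ for the same $\mu$. As before, $Rw = 0$ is equivalent to $RM^{-1}x = 0$. The decisive extra observation is that the subspace $\im \bp 0 \\ I_{n_ur} \ep$ of vectors vanishing on the damped nodes is invariant under $M$: because $M$ is block diagonal, $w = col(0, w_u)$ if and only if $Mw = col(0, M_u w_u)$, and invertibility of both $M_d$ and $M_u$ gives the equivalence in both directions. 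Thus an eigenvector $w$ of $M^{-1}\md L$ in $\ker(R)$ vanishes on the damped nodes if and only if the corresponding eigenvector $x = Mw$ of $\md L M^{-1}$ in $\ker(RM^{-1})$ does, and the contrapositive reading yields precisely (v).

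The only step demanding care is this last invariance of the damped-node subspace, which is where the block-diagonal structure of $M$, rather than mere invertibility, is indispensable; every remaining step is a routine transcription through the similarity $v \mapsto Mv$. I would then conclude by combining the two equivalences just established with \cref{t1}: since (ii) $\iff$ (iv) and (iii) $\iff$ (v), while (i), (ii), (iii) are already mutually equivalent, all five conditions are equivalent.
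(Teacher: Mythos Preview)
The paper states this corollary without proof, merely indicating that it follows ``by a change of coordinates.'' Your argument via the similarity $v \mapsto M_u v$ for (ii)$\iff$(iv) and $v \mapsto Mv$ for (iii)$\iff$(v) is precisely that change of coordinates, and your proof is correct and complete; your observation that the block-diagonal structure of $M$ is what makes the damped-node subspace $\im\bp 0 \\ I_{n_ur}\ep$ invariant under the transformation is exactly the point that needs noting.
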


\section{Topological conditions for parameter-independent stability}\label{sec5} 
$\vphantom{1}$\\ The equivalence relation in \cref{t1} formulates a matrix-theoretic approach to check if a system achieves output consensus based on its parameters $W$, $M$, $R$ and the graph structure. In this section, we assume that we only know the graph structure and the set of damped nodes and show how to extend the result of \cref{t1} to solve the parameter-independent stability problem as formulated in \cref{secprobleem}. Motivated by the observation that asymptotic output consensus of system $\cref{y}$ is equivalent to GAS of system $\cref{y2}$ defined on $\Omega$ (\cref{qook}), we define the notion of parameter-independent global asymptotic stabiltity:

\begin{definition}
Given a damping graph $\mc G=(V,V_d,E)$. If the system \cref{y2} defined on $\Omega$ is GAS for all feasible system parameters $M$, $W$ and $R$, where the structure of $R$ is according to the set $V_d \sbq V$ of damped nodes, then this system is said to be parameter-independent globally asymptotically stable, abbreviated PI-GAS. 

\end{definition}
In the sequel we abuse terminology and say that the damping graph $\mc G$ is PI-GAS instead of the system \cref{y2}. The parameter-independent stability problem boils down to the question whether a given damping graph $\mc G=(V,V_d,E)$ is PI-GAS. Note that if $\mc G$ is not PI-GAS, there exist system parameter values and an initial condition that lead to nonzero oscillatory behavior. Notice further that turning undamped nodes into partially undamped nodes would never lead to a bigger set at the left-hand side of \cref{iii}. Therefore, for determinining PI-GAS of $\mc G$, we assume without loss of generality that the nodes in $V \backslash V_d$ are undamped. Also, we pose the following assumption on the feasibility of the system parameters: 
\begin{assumption}\label{assu2} For all nodes $i \in V$ and edges $k \in E$, the mass matrix $M_i$ and edge weight matrix $W_k$ are diagonal. Consequentely, $M$ and $W$ are diagonal and $\mc G_{\md L}$, the graph associated with the Laplacian matrix $\md L$, consists of $r$ connected components that are copies of $\mc G$. 
\end{assumption}

This assumption is natural: when the matrices $M_i$ and $W_k$ are diagonal, the velocity $y_i=M_i^{-1}p_i$ and force $f_k=W_kq_k$ are in the same orthant as $p_i$ and $q_k$, respectively.\footnote{The general case, where $M_k$ and $W_k$ are any positive-definite matrices, much likely gives rise to a more complex analysis as $\mc G_{\md L}$ depends on the structure of these matrices and moreover, negative edge weights are allowed. This generalization is left as a topic for future research.}

\cref{t1} can be extended to give necessary and sufficient conditions for which $\mc G$ is PI-GAS under the feasibility condition stated in \cref{assu2}.  

\begin{lemma}\label{kg}
The damping graph $\mc G$ is PI-GAS if and only if $\tilde K_{\mc G}=\{0\}$, where 
\end{lemma}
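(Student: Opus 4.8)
The plan is to read off PI-GAS as the uniform version, over all feasible parameters, of the fixed-parameter output-consensus criterion already proved in \cref{t1}, and then to repackage the resulting condition as the triviality of $\tilde K_{\mc G}$.

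First I would fix the parameters and invoke \cref{t1}. By the definition of PI-GAS together with \cref{qook}, $\mc G$ is PI-GAS if and only if system \cref{y2} on $\Omega$ is GAS---equivalently, output consensus holds---for every feasible triple $M, W, R$ whose matrix $R$ has the block pattern dictated by $V_d$. For each fixed such triple, \cref{t1} expresses output consensus through condition \cref{iii}: for every $\mu \in \sigma(M^{-1}\md L)$ one has $\ker(M^{-1}\md L - \mu I) \cap \ker(R) \cap \im \bp 0 \\ I_{n_ur} \ep = \{0\}$. Since we assume that every node outside $V_d$ is undamped, $R_u = 0$ and $R_d \succ 0$, so $\ker(R) = \im \bp 0 \\ I_{n_ur} \ep$ and condition \cref{iii} collapses to the statement that no eigenvector of $M^{-1}\md L$ is supported entirely on the undamped nodes (equivalently, via condition \cref{ii}, that no nonzero vector lies simultaneously in $\ker(\md L_i^i)$ and is an eigenvector of $M_u^{-1}\tilde{\md L}_u$).

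Second I would push the universal quantifier over the parameters inside. By the previous step, $\mc G$ is PI-GAS exactly when this collapsed condition holds for all feasible $M$ and $W$ and all admissible $\mu$. The elementary fact I would use is that each set $\ker(M^{-1}\md L - \mu I) \cap \im \bp 0 \\ I_{n_ur} \ep$ contains the origin, so a union of such sets equals $\{0\}$ if and only if every set in the union equals $\{0\}$. Hence the conjunction over all parameters and all eigenvalues of ``the bad subspace is trivial'' is equivalent to the single statement that the union of all these bad subspaces is trivial; that union is precisely $\tilde K_{\mc G}$, yielding the claimed equivalence $\mc G$ is PI-GAS $\iff \tilde K_{\mc G} = \{0\}$.

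I expect the genuine work to be mostly bookkeeping: correctly simplifying \cref{iii} under $R_u = 0$, and verifying that the parameters and eigenvalues indexing $\tilde K_{\mc G}$ match exactly those produced by applying \cref{t1} to every feasible system. The main obstacle would arise only if $\tilde K_{\mc G}$ is defined intrinsically from the graph rather than as the above union: then the necessity direction would require, from a nonzero candidate $s_u \in \tilde K_{\mc G}$, an explicit construction of feasible parameters realizing it---for instance, choosing the interconnecting weights $W^i$ so that $\md B_d^i W^i (\md B_u^i)^T s_u = 0$ and then choosing $M_u$ so that $s_u$ becomes an eigenvector of $M_u^{-1}\tilde{\md L}_u$---and that explicit construction would be the crux of the argument.
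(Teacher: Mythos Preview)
Your outline covers the first half of the paper's argument: invoke \cref{t1}(iii), take $R_u=0$ so that $\ker(R)=\im\bp 0\\ I_{n_ur}\ep$, extend the range of $\mu$ from $\sigma(M^{-1}\md L)$ to all $\mu>0$ (harmless since the extra $\mu$ contribute nothing), and absorb $\mu$ into $M$ via $\ker(M^{-1}\md L-\mu I)=\ker(\md L-\mu M)$. That yields: $\mc G$ is PI-GAS iff
\[
K_{\mc G}:=\bigcup_{M,W\in\Lambda}\ker(\md BW\md B^T-M)\cap\im\bp 0_{n_dr\times n_ur}\\I_{n_ur}\ep=\{0\},
\]
with $\Lambda$ the positive-definite diagonal matrices of size $nr$ (resp.\ $mr$).

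The gap is that this is \emph{not} $\tilde K_{\mc G}$. The set $\tilde K_{\mc G}$ in \cref{kaagee} is built from the plain incidence matrix $B$ and $n\times n$ (resp.\ $m\times m$) parameters $\tilde M,\tilde W$, so it lives in $\R^n$; the union you produced uses $\md B=B\otimes I_r$ and lives in $\R^{nr}$. Your sentence ``that union is precisely $\tilde K_{\mc G}$'' conflates the two. The second half of the paper's proof is exactly the reduction $K_{\mc G}=\{0\}\iff\tilde K_{\mc G}=\{0\}$, and it hinges on \cref{assu2} (diagonal $M_i,W_k$): under that assumption $\md L$ is permutation-similar to a block-diagonal matrix with $r$ decoupled copies of the $n$-dimensional problem. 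The paper makes this precise with selection matrices $P_i,Q_i$ picking out the $i$-th coordinate layer; one direction projects a nonzero bad $nr$-vector onto some nonzero layer to obtain a bad $n$-vector, and the other direction lifts a bad $n$-vector diagonally into all $r$ layers with the same $\tilde M,\tilde W$ repeated. This is not deep, but it is a genuine step rather than bookkeeping, and it fails without \cref{assu2}.
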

\eqn{\tilde K_{\mc G}:=\bigcup_{\tilde M, \tilde W \in \tilde \Lambda} \ker \haakk{  B \tilde W B^T - \tilde M }  \cap \im \bp 0_{n_d \times n_u} \\ I_{n_u} \ep 
}{kaagee}
\textit{and $\tilde \Lambda$ is the set of $n \times n$ diagonal positive-definite matrices.}

\begin{proof} Let $\Lambda$ be the set of $nr \times nr$ positive-definite diagonal matrices. Consider any system with dynamics \cref{y2} defined on $\Omega$ with underlying damping graph $\mc G$ and system parameters $M_0, W_0 \in \Lambda$ and $R_0=diag(R_d,R_u)$ . By the equivalence relation $(1) \iff (3)$ of \cref{t1} and \cref{qook}, this system is GAS if and only if for all $\mu \in \sigma(M_0^{-1} \md BW_0 \md B^T)$, we have
$$ \ker(M_0^{-1} \md BW_0 \md B^T- \mu I) \cap \ker \bp R_d & 0 \\ 0 & R_u \ep \cap \im \bp 0_{n_dr \times n_ur} \\ I_{n_ur} \ep = \{0\}$$

This statement still holds if we replace `for all $\mu \in \sigma(M_0^{-1} \md BW_0 \md B^T)$' by `for all $\mu > 0$'. Indeed, all eigenvalues of $M^{-1}\md L$ are nonnegative and by the assumption that there is at least one damped node, $\ker(M^{-1}\md L)=\im(\I_r)$, whose intersection with $\im \bp 0_{n_ur \times n_dr} & I_{n_ur} \ep^T$ is $\{0\}$ and thus $\mu=0$ won't be a troublemaker either. For determining PI-GAS, we consider w.l.o.g. the worst-case scenario for the damping matrix, which is $R_u=0$. This gives $\ker(R) = \im \bp 0 & I_{n_ur} \ep^T$. Thus, $\mc G$ is PI-GAS if and only if for any inertia matrix $M \in \Lambda$, weight matrix $W \in \Lambda$ and scalar $\mu>0$, it holds that
$$\ker(M^{-1}\md BW \md B^T - \mu I) \cap \im \bp 0_{n_dr \times n_ur} \\ I_{n_ur} \ep = \{0\}$$
Since for any $\mu >0$ and $M \in \Lambda$,  $\ker(M^{-1}\md BW \md B^T- \mu I) = \ker(\md BW \md B^T-\mu M)$ and $\{ \mu M \mid \mu >0, \; M \in \Lambda \} = \Lambda$, we can take $\mu=1$ without loss of generality. Hence, $\mc G$ is PI-GAS if and only if 
\eqn{
K_{\mc G}:= \bigcup_{M, W \in \Lambda} \ker(\md BW \md B^T - M) \cap \im \bp 0_{n_dr \times n_ur} \\ I_{n_ur} \ep = \{0\}
}{kaagee2}
Notice the differences in dimensions of $K_{\mc G}$ ($nr\times nr$) and $\tilde K_{\mc G}$ ($n \times n$). To see that $\tilde K_{\mc G}=\{0\}$ is a necessary and sufficient condition for PI-GAS, observe that $\md L=\md B W \md B^T$ is reducible under \cref{assu2}: it is easily checked that $\md L_{ij}=0$ if $mod(i-j,r) \neq 0$. This allows us to write $K_{\mc G}$ as an intersection of $r$ equally sized subspaces associated with each dimension. Define the $nr \times n$ matrix $P_i:=\bp e_i & e_{i+r} & \dots & e_{i+(n-1)r}\ep$ and $mr \times m$ matrix $Q_i:=\bp e_i & e_{i+r} & \dots & e_{i+(m-1)r}\ep$, where $e_j$ denotes the $j$'th unit vector and $i \in \{1, \dots, r\}$. By reducibility of $M$ and $\md L$, we have that $[P_k^T(\md B W \md B^T -M)]_{ij}=0$ if $mod(j,r) \neq k$. Since also 
$$[P_kP_k^T]_{ij}=\bc 1 & i=j \wedge mod(i,r)=k \\ 0 & \x{otherwise,} \ec$$  
we have that $P_i^T(\md B W \md B^T -M)=P_i^T(\md B W \md B^T -M)P_iP_i^T$. 
This gives
\eq{ 
\ker \haak{ \md BW \md B^T - M } \isen \bigcap_{i=1}^r \ker \haakk{ P_i^T \haak{ \md BW \md B^T - M }} \issa
\bigcap_{i=1}^r \ker \haakk{ P_i^T \haak{ \md BW \md B^T - M } P_i P_i^T} 
}
Similarly, $[P_k^T \md B]_{ij}=0$ if $mod(j,r)\neq k$ and $P_i^T \md B =P_i^T \md B Q_i Q_i^T$. Since furthermore, $B=P_i^T\md B Q_i$, we have that $P_i^T \md B = B Q_i^T$ and hence
\eq{ 
\ker \haak{ \md BW \md B^T - M } \isen \bigcap_{i=1}^r \ker \haakk{ \haak{B Q_i^T W Q_i B^T - P_i^T M P_i}P_i^T }}
We obtain for $K_{\mc G}$: 
\begin{align}\label{eqa}
K_{\mc G} \isen \hspace{-2.95mm} \bigcup_{M, W \in \Lambda} \bigcap_{i=1}^r \ker \haakk{ \haak{ BQ_i^TWQ_i B^T - P_i^T M P_i} P_i^T }  \cap \im \bp 0_{n_dr \times n_ur} \\ I_{n_ur} \ep 
\end{align}

We are now ready to show that $\tilde K_{\mc G}=\{0\}$ if and only if $K_{\mc G}=\{0\}$. Suppose first that $\tilde K_{\mc G}=\{0\}$. Consider any nonzero $w \in \im \bp 0_{n_ur \times n_dr} & I_{n_ur} \ep^T$ and any $M, W \in \Lambda$. There exists $i_0 \in \{1, \dots, r\}$ for which $P_{i_0}^Tw$ is nonzero. Since $\tilde K_{\mc G}=\{0\}$, $Q_{i_0}^TWQ_{i_0}, \: P_{i_0}^TM P_{i_0} \in \tilde \Lambda$ and $P_{i_0}^Tw \in  \im \bp 0_{n_u \times n_d} & I_{n_u} \ep^T$, which is nonzero, we have $(BQ_{i_0}^TWQ_{i_0}B^T - P_{i_0}^TM P_{i_0}) P_{i_0}^Tw \neq 0$. Consequently, $$w \notin \bigcap_{i=1}^r \ker \haakk{ \haak{ BQ_i^TWQ_i B^T - P_i^T M P_i} P_i^T }.$$ Since $w \in \im \bp 0_{n_ur \times n_dr} & I_{n_ur} \ep^T \backslash \{0\}$ and $M,W \in \Lambda$ were taken arbitrarily, $K_{\mc G}=\{0\}$.  

Now, suppose that $K_{\mc G}=\{0\}$. Consider any nonzero $v \in \im \bp 0_{n_u \times n_d} & I_{n_u} \ep^T$ and any $\tilde M, \tilde W \in \tilde \Lambda$. Define $M^*:= \sum_{i=1}^r P_i \tilde M P_i^T$ and $W^* := \sum_{i=1}^r Q_i \tilde W Q_i^T$ and notice that both matrices are in $\Lambda$. Also, let $w:=\sum_{i=1}^r P_iv$, which is nonzero. For all $i=1,\dots,r$, $P_i^T M^* P_i=\tilde M$, $Q_i^T W^* Q_i^T = \tilde W$ and 

$$ P_{i}^Tw=P_{i}^T \sum_{i=1}^r P_iv = \ub{P_{i}^T P_{i}\vphantom{\sum_{j \neq i} P_{i}^T}}{=I_n} v + \ub{\sum_{j \neq i} P_{i}^T P_j}{=0} v=v,$$

Since $w \in \im \bp 0_{n_dr \times n_ur} & I_{n_ur} \ep^T$, $M^*, \: W^* \in \Lambda$ and $K_{\mc G}=\{0\}$, we deduce that $w \notin \bigcap_{i=1}^r \ker \haakk{ \haak{ BQ_i^TW^*Q_i B^T - P_i^T M^* P_i} P_i^T }$. 
But $(BQ_i^TW^*Q_iB^T-P_i^TM^*P_i)P_i^Tw=(B\tilde WB^T-\tilde M)v$ is constant for all $i=1, \dots, r$, hence $(B\tilde WB^T-\tilde M)v \neq 0$. Since $v \in \im \bp 0_{n_u \times n_d} & I_{n_u} \ep^T \backslash \{0\}$ and $\tilde M, \tilde W \in \tilde \Lambda$ were taken arbitrarily, $\tilde K_{\mc G}=\{0\}$. 
\qed \end{proof}
 
\cref{kg} shows that determining PI-GAS of $\mc G$ is independent of the agent and controller state dimension $r$. Note further that $\tilde K_{\mc G}$ is indeed independent of the system parameters due to the union that is taken over all feasible system parameter values. The next lemma shows an easy test to verify whether a vector $v \in \R^n$ is contained in $\tilde K_{\mc G}$. 

\begin{lemma}\label{o} For any node $i \in V$, denote by $\mc N_i$ the set of neighbors of $i$ in the graph $\mc G$. A vector $v \in \R^{n}$ is contained in $\tilde K_{\mc G}$ if and only if the following conditions hold: 
\begin{enumerate}
\item $v_i=0$ for all indices $i$ of $v$ associated with damped nodes.
\item for all indices $i$ with $v_i=0$, there exists $j \in \mc N_i$ for which $v_j<0$ if and only if there exists $k \in \mc N_i$ for which $v_k>0$. 
\item for all indices $i$ with $v_i>0$, there exists $j\in \mc N_i$ for which $v_j<v_i$. 
\item for all indices $i$ with $v_i<0$, there exists $j \in \mc N_i$ for which $v_j>v_i$. 
\end{enumerate}
\end{lemma}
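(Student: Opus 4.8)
The plan is to reduce membership in $\tilde K_{\mc G}$ to a sign-pattern condition on a single weighted-Laplacian image, and then treat necessity and sufficiency separately. Writing $L = B\tilde W B^T$ for the weighted Laplacian with edge weights $w_{ij}>0$, the $i$-th row reads $(Lv)_i = \sum_{j\in\mc N_i} w_{ij}(v_i-v_j)$. By \cref{kg}, $v\in\tilde K_{\mc G}$ means that $v$ is supported on the undamped nodes (this is exactly condition 1, coming from the factor $\im\bp 0 \\ I_{n_u}\ep$) and that there exist diagonal positive-definite $\tilde M,\tilde W$ with $(B\tilde W B^T-\tilde M)v=0$, i.e.\ $(Lv)_i=m_i v_i$ with $m_i>0$. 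Since the $m_i$ are free, independent, strictly positive scalars, this is equivalent to the purely geometric statement $\x{sign}((Lv)_i)=\x{sign}(v_i)$ for every $i$, where $\x{sign}(0)$ is read as the requirement $(Lv)_i=0$. Everything then hinges on realizing this sign pattern by some positive choice of the $w_{ij}$.

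Necessity of conditions 2--4 is immediate from this row-by-row reading. If $v_i=0$ we need $\sum_{j\in\mc N_i} w_{ij}v_j=0$ with all $w_{ij}>0$; a weighted sum of the nonzero neighbouring $v_j$ with positive coefficients can vanish only if both signs occur among the neighbours (or neither does), which is condition 2. If $v_i>0$ we need $\sum_{j\in\mc N_i}w_{ij}(v_i-v_j)>0$; were all neighbours $\ge v_i$, every summand would be $\le 0$, so a strictly smaller neighbour must exist, giving condition 3. Condition 4 is the mirror image for $v_i<0$.

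For sufficiency I would recast the existence of the weights as a flow-feasibility problem. Put $\delta=B^T v$, so that $Lv=B\diag(\delta)w=:\Phi w$, and interpret $\phi_e:=w_e(v_i-v_j)$ as a flow on each edge $e=(i,j)$. An edge with $v_i\neq v_j$ (an ``active'' edge) must then carry a strictly positive downhill flow, since $w_e>0$; edges with $v_i=v_j$ carry none and have free positive weight; and $(Lv)_i$ is exactly the net flow out of node $i$. The target is thus a downhill flow whose divergence is positive on $\{v_i>0\}$, negative on $\{v_i<0\}$ and zero on $\{v_i=0\}$. Conditions 3 and 4 guarantee that every positive (resp.\ negative) node has an outgoing (resp.\ incoming) active arc, so sources and sinks are never trapped, while condition 2 guarantees that each zero node sees active arcs on both its inflow and its outflow side, so its divergence can be tuned to zero. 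I would build such a flow by placing an arbitrarily small positive flow on every active arc (making all weights positive), augmenting along downhill source-to-sink paths to correct the nodal divergences, and finally rescaling the weights incident to each zero node to enforce exact balance.

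The main obstacle is precisely this last step: each edge weight is shared between the balance equations of its two endpoints, so the strict inequalities at the positive and negative nodes, the exact equalities at the zero nodes, and strict positivity of \emph{all} weights cannot be met by independent per-node choices. I would discharge it cleanly with a theorem of the alternative (Motzkin's transposition theorem) applied to the system $w>0$, $\x{sign}(v_i)(\Phi w)_i>0$ on the nonzero nodes, and $(\Phi w)_i=0$ on the zero nodes. An infeasibility certificate would be a vector $y\in\R^n$ satisfying $(v_i-v_j)(y_i-y_j)\le 0$ on every edge and $y_i v_i\ge 0$ on every nonzero node; a short argument examining a node where $y$ attains its extreme value, using conditions 2--4, then shows that no such nontrivial $y$ can exist. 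Feasibility of the weights --- hence $v\in\tilde K_{\mc G}$ --- follows.
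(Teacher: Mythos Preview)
Your necessity argument is the same as the paper's: both read conditions 2--4 directly off the row identity $v_i\tilde M_{ii}=\sum_{k=(i,j)\in E}(v_i-v_j)\tilde W_{kk}$.

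For sufficiency you take a genuinely different route. The paper constructs the weights explicitly and greedily: it orders the nodes so that $j>i$ whenever $|v_j|>|v_i|$, and processes them in this order; at node $i$ it assigns the weight on each edge $(i,j)$ with $j>i$ by a case-by-case formula chosen so that the already-fixed contribution $\epsilon_i$ from neighbours with smaller $|v|$ is compensated and the balance equation holds with some $\tilde M_{ii}>0$. Your Motzkin alternative is correct and conceptually cleaner, though the ``short argument'' needs one more idea than ``extreme value of $y$'' suggests. From any $i$ with $v_i>0$, condition 3 yields a strictly $v$-descending path along which the edge constraints force $y$ to be nondecreasing; the path must reach a node with $v\le 0$, and if it lands at $v=0$, condition 2 supplies one further step to a negative neighbour. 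The sign constraint $y_jv_j\ge 0$ at that endpoint then gives $0\le y_i\le 0$, so $y\equiv 0$ on $\{v>0\}$, and symmetrically on $\{v<0\}$. Condition 2 also forces $y_i=0$ at every zero node with a nonzero neighbour, and the remaining freedom of $y$ (on zero nodes with only zero neighbours) contributes neither to the $\alpha$-part nor the $\beta$-part of the Motzkin certificate, so the certificate is trivial and feasibility follows.

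What each approach buys: the paper's construction is self-contained (no LP duality invoked) and is later reused to actually \emph{exhibit} oscillating parameter values in the worked example; your duality argument avoids the case analysis and makes the role of each of conditions 2--4 completely transparent, at the cost of being non-constructive.
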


\begin{proof}
$(\Rightarrow)$ Suppose that $v \in \tilde K_{\mc G}$. Then it follows immediately that condition \textit{1} holds. Since $v \in \tilde K_{\mc G}$, $(B \tilde  W B^T-\tilde M)v=0$ for arbitrary $\tilde M, \tilde W \succ 0$. Consider the $i$'th row of $B \tilde W B^T-\tilde M$. It has negative entries on those columns $j$ for which $j \in \mc N_i$ and is zero on entries $j \notin \mc N_i \cup \{i\}$. Observering that $L_{ii}=(B \tilde W B^T)_{ii}=-\sum_{k=(i,j), \: j \in \mc N_i} \tilde W_{kk}$, we derive that 
\eq{
v_i \haak{ \sum_{\substack{k=(i,j) \in E }} \tilde W_{kk} -  \tilde M_{ii}}= \sum_{\substack{k=(i,j) \in E }}  v_j \tilde W_{kk}
}
Or equivalently,
\eqq{
v_i \tilde M_{ii} = \sum_{\substack{k=(i,j)\in E \\ v_j < v_i}}  |v_i-v_j| \tilde W_{kk} -  \sum_{\substack{k=(i,j)\in E \\ v_j > v_i}}  |v_i-v_j|\tilde W_{kk}
}{balanciones}

From this equality, conditions \textit{2}, \textit{3} and \textit{4} are immediately derived. 

$(\Leftarrow)$ Now suppose that a vector $v \in \R^n$ satisfies conditions \textit{1}, \textit{2}, \textit{3} and \textit{4}. Since the entries of $v$ corresponding to the damped nodes are zero by \textit{1}, it remains to show the existence of matrices $\tilde M \succ 0$ and $\tilde W \succ 0$ that satisfy $(B \tilde W B^T - \tilde M)v=0$. For each $i \in V$, the vector $v$ partitions its neighbors into 3 sets: 
\eq{
\mc N^{+}_i \isen \{ j \in \mc N_i \mid v_j \geq |v_i| \}, \\
\mc N^{0}_i \isen \{ j \in \mc N_i \mid |v_j| < |v_i| \}, \\
\mc N^{-}_i \isen \{ j \in \mc N_i \mid v_j \leq - |v_i| \}
}
The terms in the right-hand side of \cref{balanciones} can be grouped according to this partition: 
\eqq{
v_i \tilde M_{ii} = L_i^- - L_i^+ + \epsilon_i
}{balance2}
where
\eq{
L_i^{\pm} :\isen \hspace{-1mm}  \sum_{\substack{k=(i,j)\in E \\ j \in \mc N^{\pm}_i}}  \hspace{-1mm} |v_i-v_j|  \tilde W_{kk},  &
\epsilon_i :\isen \sum_{\substack{k=(i,j) \in E \\ j \in \mc N_i^0 }} (v_i-v_j) \tilde W_{kk}
}
Now, $v \in   \tilde K_{\mc G}$ if and only if for each $i=1,\hdots, n$ there exists a scalar $\tilde M_{ii}>0$ and scalars $\tilde W_{kk}>0$ for all $k=(i,j) \in E$ such that \cref{balance2} holds. 

One way to construct the matrix $\tilde W$ iteratively is as follows. Label the nodes such that $j>i$ whenever $|v_j|>|v_i|$. Start with $i=1$ according to this labelling and consider all edges $k=(i,j)$. Set each diagonal entry $\tilde W_{kk}$, corresponding to the edge $k=(i,j)$ with $j>i$ to be
\eq{
& \tilde  W_{kk} = \bc
1 & v_j=v_i \\
\fone{ |v_i-v_j| | \mc N^+_i|} & 0 \leq v_i < v_j , \; \epsilon_i=0 \\
\fone{ |v_i-v_j| | \mc N^-_i|} & v_j < v_i \leq 0, \; \epsilon_i=0 \\
\frac{2}{ |v_i-v_j| | \mc N^+_i|} & v_i < 0 < v_j, \; \epsilon_i=0 \\
\frac{2}{ |v_i-v_j| | \mc N^-_i|} & v_j < 0 < v_i, \; \epsilon_i=0 \\
\frac{\fone{2} |\epsilon_i|}{ |v_i-v_j| | \mc N^{+}_i|} & 0 \neq v_i < v_j, \; \epsilon_i \neq 0 \\
\frac{\fone{2} |\epsilon_i|}{ |v_i-v_j| | \mc N^{-}_i|} & v_j < v_i \neq 0, \; \epsilon_i \neq 0 \\
\ec
}
Repeat the assignment of values to the $\tilde W_{kk}$s iteratively for higher $i$. Note that with the chosen labelling, $\tilde W_{kk}$ is well-defined since $\epsilon_i$ does not depend on $\tilde W_{kk}$ values for edges $k$ that connect $i$ with nodes that have a higher index. Substituting these positive values of $\tilde W_{kk}$ in \cref{balance2}, we make the following observations: 
\smallskip
\bi
\item if $v_i=0$, then $\mc N_i^0=\emptyset$ and hence $\epsilon_i=0$. From condition \textit{2} it follows that $\mc N_i^+$ and $\mc N_i^-$ are either both empty or both nonempty. Hence, $(L_i^+,L_i^-,\epsilon_i) \in \{ (0,0,0), (1,1,0) \}$, yielding $v_i \tilde M_{ii}=0$. 
\item if $v_i>0$ and $\epsilon_i=0$, then again $\mc N_i^0=\emptyset$. From condition \textit{3}, it follows that there exists $j \in \mc N_i^-$, yielding $L_i^-=2$. Depending on whether $\mc N_i^+$ is empty or not, $L_i^+ \in \{1,0\}$, so we obtain $v_i \tilde M_{ii} \in \{1,2\}$. 
\item if $v_i<0$ and $\epsilon_i=0$, then similarly, $v_i \tilde M_{ii} \in \{-1,-2\}$. 
\item otherwise, $v_i \neq 0$ and $\epsilon_i \neq 0$. Depending on whether $\mc N_i^+$ and $\mc N_i^-$ are empty or not, $L_i^+, L_i^- \in \acco{ 0,\half |\epsilon_i| }$. This gives $v_i \tilde M_{ii} \in  \acco{ \half \epsilon_i, \epsilon_i, \frac{3}{2} \epsilon_i }$
\ei
\smallskip
Noting that $\epsilon_i \geq 0$ if $v_i>0$ and $\epsilon_i \leq 0$ if $v_i<0$, we see that in  all cases, \cref{balance2} is satisfiable for some $\tilde M_{ii}>0$. Therefore, $w \in  \tilde K_{\mc G}$.
\qed\end{proof}

By the above lemma, PI-GAS of $\mc G$ is independent of the edges between damped nodes.
Note that the conditions in \cref{o} can be completely rephrased in terms of the signs of $v \in \R^n$ and $B^Tv \in \R^m$, where the latter vector contains the entry differences of neighboring nodes. This makes explicit computing of $ \tilde K_{\mc G}$ a finite dimensional problem: it suffices to perform a brute-force search using (sign) vectors in $\{-1,0,1\}^{n+m}$. 

Interestingly, for determining whether $ \tilde K_{\mc G}=\{0\}$, only the first two conditions of \cref{o} are relevant. That is, there exists a nonzero vector that only meets the first two conditions if and only if there exists a nonzero vector that meets all four conditions, which holds if and only if $\mc G$ is not PI-GAS. This is shown in the following proposition: 

\begin{proposition}\label{t2} $\mc G$ is PI-GAS if and only if there does not exist a nonzero $v \in \R^n$ that satisfies the following two conditions:
\begin{enumerate}
\item $v_i=0$ for all indices $i$ of $v$ associated with damped nodes.
\item for all indices $i$ with $v_i=0$, there exists $j \in \mc N_i$ for which $v_j<0$ if and only if there exists $k \in \mc N_i$ for which $v_k>0$. 
\end{enumerate}
\end{proposition}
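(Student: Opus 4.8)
The plan is to reduce the statement to \cref{o} and \cref{kg}. By \cref{kg}, $\mc G$ fails to be PI-GAS exactly when $\tilde K_{\mc G} \neq \{0\}$, and by \cref{o} a nonzero vector lies in $\tilde K_{\mc G}$ precisely when it satisfies all four listed conditions. Hence the proposition is equivalent to the assertion that a nonzero vector satisfying conditions \textit{1} and \textit{2} exists if and only if a (possibly different) nonzero vector satisfying conditions \textit{1}--\textit{4} exists. One direction is immediate, since any vector meeting all four conditions in particular meets the first two. The content lies in the converse: from a nonzero $v$ obeying only \textit{1} and \textit{2}, I would construct a nonzero $w$ obeying \textit{1}--\textit{4}.

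The key observation is that conditions \textit{1} and \textit{2} depend only on the sign pattern of the vector --- which entries are zero, positive, or negative --- whereas \textit{3} and \textit{4} constrain the magnitudes. I am therefore free to redefine magnitudes while preserving signs, and \textit{1} and \textit{2} persist automatically. Concretely, let $Z^-$ be the set of nodes where $v$ is non-positive and $Z^+$ the set where $v$ is non-negative, and set $w_i := 0$ if $v_i = 0$, $w_i := \operatorname{dist}(i, Z^-)$ if $v_i > 0$, and $w_i := -\operatorname{dist}(i, Z^+)$ if $v_i < 0$, where $\operatorname{dist}$ is the graph distance in $\mc G$. Because $\mc G$ is connected and contains at least one damped node, which is a zero of $v$ by condition \textit{1} and hence lies in both $Z^-$ and $Z^+$, these distances are finite; moreover they are at least $1$ on the positive and negative nodes respectively. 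Thus $w$ has the same sign pattern as $v$, is nonzero, and still satisfies \textit{1} and \textit{2}.

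It then remains to check \textit{3} and \textit{4}. For a positive node $i$ with $w_i = d \geq 1$: if $d = 1$ then $i$ is adjacent to a node of $Z^-$, whose $w$-value is $\leq 0 < w_i$; if $d \geq 2$ then $i$ has a neighbor $j$ on a shortest path to $Z^-$ with $\operatorname{dist}(j, Z^-) = d - 1 \geq 1$, so $j$ is positive and $w_j = d - 1 < w_i$. In either case $i$ has a strictly smaller neighbor, which is condition \textit{3}; condition \textit{4} follows by the symmetric argument on the negative nodes, and since the two reassignments act on disjoint node sets they do not interfere. Therefore $w \in \tilde K_{\mc G} \setminus \{0\}$, so $\mc G$ is not PI-GAS, completing the converse.

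The main obstacle is exactly this converse construction: conditions \textit{3} and \textit{4} are ``no strict local extremum'' requirements on the nonzero entries, and the nontrivial point is that a magnitude assignment realizing them can always be found without disturbing the sign-based conditions \textit{1} and \textit{2}. The distance-to-boundary assignment resolves this, with connectivity and the presence of a damped (zero) node ensuring that the boundary of each positive or negative region is nonempty.
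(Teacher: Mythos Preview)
Your proof is correct and follows essentially the same route as the paper: reduce via \cref{kg} and \cref{o} to the equivalence of ``nonzero vector satisfying \textit{1}--\textit{2}'' with ``nonzero vector satisfying \textit{1}--\textit{4}'', handle one direction trivially, and for the converse build $w$ from $v$ by a graph-distance reassignment of magnitudes that preserves the sign pattern. The only cosmetic difference is the distance target: the paper takes $|w_i|$ to be the distance from $i$ to the nearest \emph{zero} node, whereas you take the distance to the nearest node of the \emph{opposite-or-zero} sign ($Z^-$ for positive entries, $Z^+$ for negative ones); both choices verify conditions \textit{3}--\textit{4}, and your case split $d=1$ versus $d\ge 2$ is in fact slightly more careful than the paper's corresponding step.
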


\begin{proof} $(\Rightarrow)$ Suppose that there exists a nonzero $v \in \R^n$ that meets conditions \textit{1} and \textit{2}. We construct a vector $w \in \R^n$ that meets the four conditions of \cref{o}. Denote by $dist_0(i)$ the minimum number of edges in a path in $\mc G$ from node $i$ to a node $j$ for which $v_j=0$. For all $i=1, \dots, n$, set $w_i=0$ if $v_i=0$, $w_i=dist_0(i)$ if $v_i>0$ and $v_i=-dist_0(i)$ if $v_i<0$. Obviously, $sgn(w)=sgn(v)$, so $w$ satisfies conditions \textit{1} and \textit{2}. Now, consider $i \in V$ for which $w_i>0$ and the path of minimum length that connects $i$ with a node $j \in V$ for which $w_j=0$. Then the second node on this path, say node $k$, satisfies $w_k=w_i-1$ and thus $w$ meets condition \textit{3} too. Similar for condition \textit{4}. Hence, by \cref{o}, $w \in  \tilde K_{\mc G}$. Again from $sgn(w)=sgn(v)$, $w$ is nonzero too and thus $ \tilde K_{\mc G} \neq \{0\}$, which, by \cref{kg}, implies that $\mc G$ is not PI-GAS. 

$(\Leftarrow)$ Suppose that $\mc G$ is not PI-GAS. Then by \cref{kg} and \ref{o}, there exists a nonzero vector $v \in \R^n$ satisfying the four conditions stated in \cref{o} and in particular the two conditions stated above. \qed\end{proof}

The above proposition can be completely rephrased in terms of the sign of $v$ and therefore, a brute-force search with vectors in $\{-1,0,1\}^n$ suffices to determine if $\tilde K_{\mc G}=\{0\}$. Thus, the parameter-independent stability problem is reduced to an even smaller finite dimensional problem. In fact, the problem is equivalent to a topological coloring problem, where each node $i$ that is black, red or blue corresponds to the value $v_i=0$, $v_i>0$ or $v_i<0$, respectively. Before we state the problem, we introduce the following terminology:

Consider a simple, connected damping graph $\mc G=(V,V_d,E)$, whose nodes are colored black, blue and red. A black node is called \textit{poorly balanced} if all its neighbors are black, \textit{richly balanced} if it has at least one red and at least one blue neighbor and is \textit{unbalanced} otherwise.

Also, the colored graph of $\mc G$ is called \textit{poorly balanced} if $\mc G$ consists entirely of black nodes, \textit{richly balanced} if $\mc G$ contains at least one richly balanced black node and no unbalanced black nodes. Otherwise, $\mc G$ is called \textit{unbalanced} and contains at least one unbalanced black node.

\medskip

{\bf Problem} \textit{(Richly balanced coloring (RBC) problem)} Given a damping graph $\mc G$, determine if there exists an assignment of colors that renders $\mc G$ richly balanced.

\begin{definition} In a damping graph $\mc G=(V,V_d,E)$, a node $v \in V$ is said to be \textit{uncolorable} if $v$ is black in every balanced colored graph of $\mc G$. The set of uncolorable nodes is denoted by $\mc U(\mc G)$. All other nodes are called \textit{colorable} nodes.\end{definition}

Using the RBC problem formulation and the above definition we restate \cref{t2} in terms of richly balanced colorings and uncolorable nodes: 

\begin{proposition}\label{t3} Consider a damping graph $\mc G=(V,V_d,E)$. The following statements are equivalent:
\begin{enumerate}[(i)]
\item $\mc G$ is PI-GAS. 
\item There does not exist an assignment of colors that renders $\mc G$ richly balanced. 
\item $\mc U(\mc G)=V$, i.e. every node in $\mc G$ is uncolorable.
\end{enumerate}
\end{proposition}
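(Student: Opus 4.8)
The plan is to prove the two equivalences (i) $\iff$ (ii) and (ii) $\iff$ (iii) separately. The first reduces everything to \cref{t2} through a sign-to-color dictionary, while the second is a direct unpacking of the notions of balanced colored graph and uncolorable node.

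For (i) $\iff$ (ii) I would use the correspondence that sends a vector $v \in \R^n$ to the coloring of $\mc G$ in which node $i$ is black, red, or blue according to whether $v_i = 0$, $v_i > 0$, or $v_i < 0$, and conversely sends a coloring to the sign vector with entries in $\{0, 1, -1\}$. Under this dictionary, condition \textit{1} of \cref{t2} reads exactly "every damped node is black", and condition \textit{2} reads exactly "no black node is unbalanced"; moreover $v \neq 0$ corresponds to "not every node is black". Since conditions \textit{1}--\textit{2} depend only on the signs of the entries, the correspondence respects both directions: a nonzero $v$ satisfying \textit{1}--\textit{2} exists if and only if a coloring exists in which the damped nodes are black, no black node is unbalanced, and at least one node is non-black.

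The one genuine step is to match this description to the definition of a richly balanced coloring, which in addition requires at least one richly balanced black node. I would establish the following boundary lemma: in the connected graph $\mc G$ with $n_d \geq 1$ (so at least one black node is present), any coloring with no unbalanced black node and at least one non-black node already contains a richly balanced black node. Indeed, the black and the non-black nodes each form a nonempty set, so by connectivity some edge joins a black node $b$ to a colored neighbor; this neighbor is red or blue, and since $b$ is not unbalanced it must then have neighbors of both colors, making $b$ richly balanced. With this lemma, "a nonzero $v$ satisfying \textit{1}--\textit{2} exists" is equivalent to "a richly balanced coloring exists"; negating and invoking \cref{t2} yields (i) $\iff$ (ii).

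For (ii) $\iff$ (iii), I would observe that a balanced colored graph is by definition one with no unbalanced black node, hence it is either poorly balanced (all nodes black, the unique such coloring) or richly balanced, and that the all-black coloring is always balanced. Then $\mc U(\mc G) = V$ says that every node is black in every balanced coloring, i.e. the all-black coloring is the only balanced coloring, i.e. no richly balanced coloring exists --- which is exactly (ii). The main obstacle in the whole argument is not any calculation but the careful reconciliation of the two formulations of "richly balanced" (existence of a non-black node versus existence of a richly balanced black node); the boundary lemma is the bridge, and it is precisely where connectivity of $\mc G$ and the assumption $n_d \geq 1$ enter.
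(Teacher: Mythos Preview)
Your proposal is correct and follows the intended approach. The paper itself presents \cref{t3} as a direct restatement of \cref{t2} under the coloring terminology, without an explicit proof; your argument supplies exactly the missing details, namely the sign-to-color dictionary and the boundary lemma (which is the one place where connectivity and $n_d \geq 1$ are used, and which the paper leaves implicit).
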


\subsection{Time complexity of the RBC problem}

Unfortunately, the RBC problem has a high time complexity as it is NP-complete.  A problem is NP-complete if every solution can be checked for feasibility in polynomial time (i.e. it is in NP) and furthermore, it is at least as hard as the hardest problems in NP (i.e. it is NP-hard). It is clear that the RBC problem is in NP. We show that the RBC problem is NP-hard by a reduction from SAT, a problem which is known to be NP-hard. 

\begin{theorem}\label{nphard} The decision problem whether a given damping graph $\mc G=(V,V_d,E)$ is PI-GAS is NP-complete. 
\end{theorem}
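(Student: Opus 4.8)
The plan is to establish membership in the relevant complexity class and then prove hardness by a reduction from a suitable restriction of SAT, working throughout with the coloring reformulation of \cref{t3} rather than with the matrices directly.

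First I would settle membership. By \cref{t3}, a damping graph $\mc G$ fails to be PI-GAS precisely when it admits a richly balanced coloring, and by \cref{o} and \cref{t2} the validity of a proposed coloring can be tested purely locally: one checks that every damped node is black, that every black node has a red neighbor if and only if it has a blue neighbor, and that at least one node is non-black. Each test runs in time polynomial in $|V|+|E|$, so a coloring is a polynomial-size certificate and the existence problem for richly balanced colorings (the RBC problem) lies in NP. I would note the mild abuse in the statement: since $\mc G$ is PI-GAS exactly when \emph{no} richly balanced coloring exists, the PI-GAS problem is the complement of the RBC problem, so what the theorem really targets is the NP-completeness of the RBC problem, from which the (co-NP) hardness of the PI-GAS decision follows.

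Second, for hardness I would reduce from SAT. The key observation is that for a black node $b$ all of whose neighbors are colored, the balance condition "red neighbor iff blue neighbor" collapses to "$b$ has both a red and a blue neighbor", i.e. a \emph{not-all-equal} constraint on its neighborhood; this makes clause nodes behave as not-all-equal gates, so it is natural to reduce from (monotone) not-all-equal 3-SAT, a known NP-complete restriction of SAT. Given variables $x_1,\dots,x_k$ and clauses $C_1,\dots,C_m$, I would build a damping graph as follows. For each variable $x_j$ introduce two \emph{literal nodes} $u_j,\bar u_j$, identifying red with \emph{true} and blue with \emph{false}; these are the only nodes permitted to be non-black, every other node being declared damped. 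For each clause $C_i$ introduce a damped \emph{clause node} $c_i$ adjacent to the three literal nodes occurring in $C_i$. To enforce negation consistency I would attach to each variable an \emph{inverter gadget}: a damped node $t_j$ adjacent to exactly $u_j$ and $\bar u_j$, whose balance forces $\{u_j,\bar u_j\}$ to be either both black or oppositely colored — precisely "$x_j$ is unassigned, or $u_j,\bar u_j$ carry opposite truth values".

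The intended correspondence is then that a not-all-equal satisfying assignment produces the coloring in which each $u_j$ takes the color of its truth value (and $\bar u_j$ the opposite): every clause node sees both a red and a blue literal and is balanced, and some literal node is colored, so the coloring is richly balanced. The \textbf{main obstacle} is the converse, and specifically the \emph{all-black escape hatch}: a black node whose neighborhood is entirely black is vacuously balanced, so the balance condition alone never forbids a coloring that leaves most variables unassigned and colors only a small internally consistent fragment. Moreover, inverter gadgets transmit \emph{color} and therefore cannot propagate bare "coloredness" without also fixing relative truth values, so the reduction must contain an extra forcing construction that compels any non-trivial coloring to extend to a full consistent assignment — for instance binding the literal nodes into a rigid subgraph so that the global condition "not all nodes black" (equivalently $\tilde K_{\mc G}\neq\{0\}$ of \cref{kg}) forces every variable gadget to be colored while still leaving each variable's truth value free. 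Getting this gadget to exclude partial assignments without spuriously constraining the truth values is the delicate heart of the argument; once it is in place, "formula satisfiable $\iff$ $\mc G$ not PI-GAS" follows, and combining this with the NP membership above yields completeness.
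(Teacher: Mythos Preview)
Your framing is correct on every structural point: the problem is really the NP-completeness of the RBC problem (PI-GAS being its complement), a proposed coloring is a polynomial certificate, and damped clause nodes act as not-all-equal gates on their literal neighbors. You also put your finger precisely on the real difficulty, the ``all-black escape hatch'': with only clause and inverter gadgets, nothing prevents a balanced coloring that assigns colors to a strict subset of the variables while leaving the rest black, so a non-PI-GAS verdict need not correspond to a full satisfying assignment. Identifying this obstacle is good; but you then describe what a rigidity gadget must accomplish without actually constructing one, and explicitly defer ``the delicate heart of the argument''. As written this is a gap, not a proof.

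The paper's construction fills exactly this hole. It threads all undamped literal nodes (together with an extra undamped base node $v_0$) onto a single path that alternates direct undamped--undamped edges with the damped inverter nodes, so that every undamped node has at most one undamped neighbor. In any balanced coloring, a black undamped node then forces its sole undamped neighbor to be black (otherwise the node is unbalanced), and the adjacent damped inverter forces the literal on its other side to be black as well; blackness propagates in both directions along the chain and swallows every undamped node. Hence any non-trivial balanced coloring colors \emph{all} literals, and the escape hatch is closed. The paper also takes a slightly different route at the clause level: rather than NAE-SAT it reduces from ordinary CNF-SAT by making $v_0$ adjacent to every clause node; fixing $v_0$ red (by red/blue symmetry) turns the balance condition at each clause into ``has a blue literal'', i.e.\ a plain disjunction. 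Your NAE approach can be completed with the same chain trick, but the missing gadget is the substance of the argument.
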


\begin{proof} The SAT problem is a decision problem that asks if we can assign to $n$ boolean variables the value 'true' or 'false' such that a given boolean formula that includes these variables is true. Here, the boolean formula is assumed to be a conjunction of disjunctive clauses, where each clause consists of positive or negative literals associated with the $n$ boolean variables.\footnote{A positive literal is a boolean variable; a negative literal is its negation  A disjunctive clause is an expression of a finite collection of positive and negative literals that is true if and only if at least one literal is true. A conjunction of clauses is true if and only if all clauses are true.} We will show now how we can use the RBC problem to solve an instance $\mc I$ of the SAT problem. $\mc I$ is defined by a boolean formula consisting of $p$ clauses and containing $n$ boolean variables. We construct the connected damping graph $\mc G_{\mc I}=(V,V_d,E)$ in the following way: 

\smallskip

\bi
\item Create an undamped base node, denoted by $v_0$. 
\item For each boolean variable $x_i$, create 2 undamped nodes: $v_i^+$ and $v_i^-$ and a damped node $v_i^0$. Create edges $(v_i^+, v_i^0)$ and $(v_i^0, v_i^+)$. For $i=1$ create the edge $(v_0, v_1^-)$, for $i \geq 2$ create an edge $(v_{i-1}^+, v_i^-)$. 
\item For each clause $j=1,\dots, p$, create a damped node $\bar v_j$ and an edge $(v_0, \bar v_j)$. Also, for each positive literal $x_i$ in clause $j$, create an edge $(\bar v_j, v_i^+)$. For each negative literal $\neg x_i$ in clause $j$, create an edge $(\bar v_j, v_i^-)$.  
\ei

\smallskip

Note that $\mc G_{\mc I}$ is connected and $\mc I$ can be derived uniquely from $\mc G_{\mc I}$ and the labelling of the nodes. We show that the following statements are equivalent: 

\smallskip

\begin{enumerate}
\item The boolean formula of $\mc I$ is satisfiable, i.e. there exists an assignment of truth and false values to $x_1, \dots, x_n$ such that the boolean formula of $\mc I$ is true
\item There exists a richly balanced colored graph of the damping graph $\mc G_{\mc I}$. 
\end{enumerate} 

\smallskip

Consider the color function $f_c$ that maps an assignment of boolean variables to a coloring of $\mc G$ as follows:

\eq{ 
f_c(v_0) \isen \x{red} \\ f_c(v_i^+) \isen \bc \x{blue} & x_i \x{ is true} \\ \x{red} & x_i \x{ is false} \ec \\ f_c(v_i^-) \isen \bc \x{blue} &  x_i \x{ is false} \\ \x{red} & x_i \x{ is true} \ec 
}

$(1) \implies (2)$ Consider an assigment of true and false values such that the boolean formula is true. Apply the color function $f_c$ to this assignment to obtain a colored graph of $\mc G_{\mc I}$. Each node of the type $v_i^0$ in the colored graph is richly balanced since it has a red and blue neighbor. Also, each $\bar v_j$ is richly balanced since it has a red neighbor ($v_0$) and at least one blue neighbor, which is the node corresponding to the literal that is true in clause $j$. There are no other black nodes and consequently the created colored graph is richly balanced.

$(2) \implies (1)$ $\mc G_{\mc I}$ satisfies the property that if one of the undamped nodes is black in a balanced colored graph of $\mc G_{\mc I}$, then all undamped nodes are black. This can be easily shown by applying the zero forcing algorithm. Also, by the red-blue symmetry of the RBC problem, (2) is true if and only if there exists a richly balanced colored graph in which $v_0$ is red. Consider such a richly balanced colored graph of $\mc G_{\mc I}$, then every $v_i^0$ must be richly balanced and thus precisely one of its neighbors $v_i^+$ and $v_i^-$ is red; the other one is blue. Note that the color function $f_c$ according to which $\mc G_{\mc I}$ is colored, has a unique inverse, which is the corresponding assignment of the boolean variables. Every node $\bar v_j$ has at least one blue neighbor, which corresponds to the literal which is true in clause $j$. Therefore, the boolean formula corresponding to ${\mc I}$ is true. 

If there exists an algorithm that solves the RBC problem in polynomial time, then any instance $\mc I$ of SAT can be solved in polynomial time too. Indeed, given $\mc I$, we create the damping graph $\mc G_{\mc I}$ in time $\mc O(|\mc I|)$, solve the RBC  problem with the polynomial algorithm and if the output is 'yes', the boolean formula of $\mc I$ is satisfiable. Hence, if there does not exists an algorithm that solves SAT in polynomial time (this question boils down to the P versus NP problem, which is a Millennium Prize Problem), then there does not exist such an algorithm for the RBC  problem either. Therefore, the RBC problem is as hard as SAT and consequently, it is NP-hard. 
\qed\end{proof}

{\bf Example}

Consider an instance $\mc I$ of SAT defined by the boolean formula $(\neg x_1 \vee x_2) \wedge (\neg x_1 \vee \neg x_2) \wedge (x_1 \vee \neg x_3 \vee \neg x_4)$. This formula is satisfiable, e.g. with $x_1$ and $x_4$ being false and $x_2$ and $x_3$ being true. The corresponding richly balanced colored graph of $\mc G_{\mc I}$ is illustrated in Figure 1. 

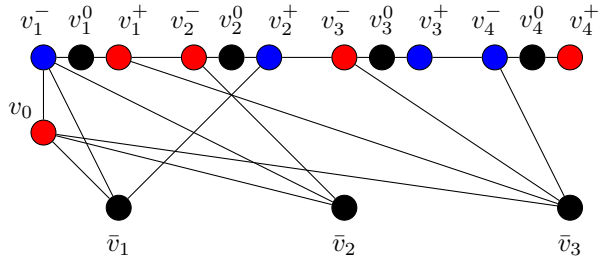
\begin{figure}[h!]
\begin{center}
\begin{tikzpicture}
[
b/.style={circle, draw=black,fill=black},
w/.style={circle, draw=black, fill=white},
bl/.style={circle, draw=black, fill=blue},
r/.style={circle, draw=black, fill=red}
]

\node[r] (base) at (1,0) {};
\node[bl] (1m) at (1,1) {};
\node[b] (1) at (1.5,1) {};
\node[r] (1p) at (2,1) {};
\node[r] (2m) at (3,1) {};
\node[b] (2) at (3.5,1) {};
\node[bl] (2p) at (4,1) {};
\node[r] (3m) at (5,1) {};
\node[b] (3) at (5.5,1) {};
\node[bl] (3p) at (6,1) {};
\node[bl] (4m) at (7,1) {};
\node[b] (4) at (7.5,1) {};
\node[r] (4p) at (8,1) {};

\node[b] (c1) at (2,-1) {};
\node[b] (c2) at (5,-1) {};
\node[b] (c3) at (8,-1) {};

\node at (.7,.3) {$v_0$};
\node at (.9,1.5) {$v_1^-$};
\node at (1.5,1.5) {$v_1^0$};
\node at (2.2,1.5) {$v_1^+$};
\node at (2.9,1.5) {$v_2^-$};
\node at (3.5,1.5) {$v_2^0$};
\node at (4.2,1.5) {$v_2^+$};
\node at (4.9,1.5) {$v_3^-$};
\node at (5.5,1.5) {$v_3^0$};
\node at (6.2,1.5) {$v_3^+$};
\node at (6.9,1.5) {$v_4^-$};
\node at (7.5,1.5) {$v_4^0$};
\node at (8.2,1.5) {$v_4^+$};

\node at (2,-1.5) {$\bar v_1$};
\node at (5,-1.5) {$\bar v_2$};
\node at (8,-1.5) {$\bar v_3$};

\draw (base)--(c1); 
\draw (base)--(c2); 
\draw (base)--(c3); 

\draw (base)--(1m)--(1)--(1p)--(2m)--(2)--(2p)--(3m)--(3)--(3p)--(4m)--(4)--(4p);
\draw (c1)--(1m);
\draw (c1)--(2p);
\draw (c2)--(1m);
\draw (c2)--(2m);
\draw (c3)--(1p);
\draw (c3)--(3m);
\draw (c3)--(4m);

\end{tikzpicture}
\caption{The richly balanced colored graph of $\mc G_{\mc I}$ associated with the instance $\mc I$ of SAT of the example above.}
\end{center}
 \end{figure}

\subsection{Zero forcing property}

In this subsection we focus on a property of the graph topology called the zero forcing property, which is a sufficient condition for PI-GAS. To determine if a damping graph $\mc G=(V,V_d,E)$ satisfies the zero forcing property, we use the \emph{zero forcing algorithm} (ZFA). Initiating with the set of damped nodes being black and the undamped nodes being white, the algorithm applies repeatedly the \emph{black forcing rule} of selecting a black node $b$ with exactly one white neighbour $w$ and change the color of $w$ to black. Alternatively, we say that $b$ \textit{forces} $w$. The algorithm terminates if the black forcing rule cannot be applied anymore, which occurs if every black node has either none or at least two white neighbours. We refer to the graph after application of the zero forcing algorithm as the \emph{derived graph}. The set of black nodes in the derived graph is denoted by $\mc D(\mc G)$.

\begin{definition}  The original set of black nodes is said to be a \emph{zero forcing set} if $\mc D(\mc G)=V$, i.e. the derived graph consists entirely of black nodes. In that case, $\mc G$ is said to satisfy the zero forcing property. 
\end{definition}

We refer to \cite{zfs1}, \cite{zfs2}, \cite{zfs3} and \cite{zfs4} for more on zero forcing sets. In the following lemma we show that the set of uncolorable nodes is unchanged when all black nodes are turned into damped nodes. Then a small step is needed to show that $\mc G$ is PI-GAS if it satisfies the zero forcing property (\cref{eqle9}). 

\begin{lemma}\label{t10} Consider a damping graph $\mc G=(V,V_d,E)$ and define \\$\hat{\mc G}:=(V, \mc D(\mc G), E)$.  Then $\mc U(\hat{\mc G})=\mc U(\mc G).$
\end{lemma}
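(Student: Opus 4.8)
\emph{Strategy.} The plan is to prove that $\mc G$ and $\hat{\mc G}$ admit exactly the same balanced colorings, after which the claim is immediate. Since both graphs share the vertex set $V$ and edge set $E$, the balance type of a given coloring (poorly balanced, richly balanced, or unbalanced) depends only on the assignment of colors and on $E$, and is therefore identical whether we regard the coloring as a coloring of $\mc G$ or of $\hat{\mc G}$. The two damping graphs differ solely in the set of nodes required to be black: $V_d$ for $\mc G$ and $\mc D(\mc G)$ for $\hat{\mc G}$. Because a node is uncolorable precisely when it is black in every balanced coloring, i.e. $\mc U(\cdot)$ is the intersection of the black-node sets taken over all balanced colorings, it suffices to show that the family of balanced colorings of $\mc G$ coincides with that of $\hat{\mc G}$.

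\emph{Key step.} The crux is to show that in \emph{every} balanced colored graph of $\mc G$, all nodes of $\mc D(\mc G)$ are colored black. I would prove this by induction on the order in which nodes are forced black by the zero forcing algorithm. Fix an arbitrary balanced coloring $c$ of $\mc G$. The base case is the initial black set $V_d$, which is black under $c$ since every damped node of a balanced coloring is black. For the inductive step, suppose a black node $b$ forces its unique white neighbour $w$; by the induction hypothesis $b$ and all of its neighbours other than $w$ are already black under $c$. As $c$ is balanced, $b$ is not unbalanced, so it has a red neighbour if and only if it has a blue neighbour. If $w$ were red, then $b$ would have a red but no blue neighbour (all others being black), and symmetrically if $w$ were blue; either case renders $b$ unbalanced, a contradiction. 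Hence $w$ is black under $c$, completing the induction. Thus $\mc D(\mc G)$ is contained in the black-node set of every balanced coloring of $\mc G$.

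\emph{Conclusion.} With the key step established, both inclusions follow. Since $V_d \sbq \mc D(\mc G)$, any balanced coloring of $\hat{\mc G}$, which makes $\mc D(\mc G)$ and hence $V_d$ black, is a balanced coloring of $\mc G$. Conversely, by the key step any balanced coloring of $\mc G$ already makes every node of $\mc D(\mc G)$ black, so it is a balanced coloring of $\hat{\mc G}$. Therefore the two families of balanced colorings coincide, and intersecting the black-node sets over these identical families yields $\mc U(\hat{\mc G})=\mc U(\mc G)$. I expect the main obstacle to be the key step: one must check carefully that the black forcing rule is exactly the propagation compelled by the ``red neighbour if and only if blue neighbour'' balance condition, so that each forcing move is legitimate under an \emph{arbitrary} balanced coloring rather than merely under the structural algorithm.
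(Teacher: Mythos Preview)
Your proof is correct and follows essentially the same approach as the paper: both use induction along the zero forcing algorithm to show that every node of $\mc D(\mc G)$ is black in every balanced coloring of $\mc G$ (equivalently, $\mc D(\mc G)\sbq\mc U(\mc G)$), and then use $V_d\sbq\mc D(\mc G)$ for the reverse inclusion. Your framing in terms of the two damping graphs admitting identical families of balanced colorings is slightly more explicit than the paper's, but the underlying argument is the same.
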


\begin{proof} We use an inductive argument to show that the black nodes in the derived graph of the ZFA are uncolorable, i.e. $\mc D(\mc G) \sbq \mc U(\mc G)$. Denote by $ \mc D^i(\mc G)$ the set of black nodes at the end of stage $i$ of the ZFA.  The black nodes in the initial graph of the ZFA are the damped nodes and they are black in every balanced colored graph, hence $\mc D^0(\mc G)=V_d \sbq \mc U(\mc G)$. Suppose that $i$ is not the last stage of the algorithm and $\mc D^i(\mc G) \sbq \mc U(\mc G)$. At the beginning of stage $i$, the algorithm selects and blackens a white node $w$ that is the only white neighbor of some black node $b$. Since $b$ and all its neighbors except for $w$ are uncolorable, $w$ must be black too in every balanced colored graph. So $\mc D^{i+1}(\mc G) \sbq \mc U(\mc G)$. By induction now follows that  $\mc D(\mc G) \sbq \mc U(\mc G)$. Now, consider any balanced colored graph of $\mc G$. Note that its black nodes cover $\mc U(\mc G)$ and hence $\mc D(\mc G)$ too. Since furthermore, the graph structures of $\mc G$ and $\hat{\mc G}$ are equal, the same color assignment can be used in $\hat{\mc G}$ to induce a balanced colored graph of $\hat{\mc G}$. Thus, any colorable node in $\mc G$ is colorable in $\hat{\mc G}$, i.e. $\mc U(\hat{\mc G}) \sbq \mc U(\mc G)$. The other direction follows readily from the fact that $V_d \sbq \mc D(\mc G)$. 
\qed\end{proof}

\begin{corollary}\label{eqle9} If $\mc G$ satisfies the zero forcing property, then $\mc G$ is PI-GAS. 
\end{corollary}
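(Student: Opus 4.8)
The plan is to chain together the characterization of PI-GAS in terms of uncolorable nodes (\cref{t3}) with the invariance of the set of uncolorable nodes under the zero forcing algorithm (\cref{t10}). Since both of these results are available, the corollary should fall out almost immediately, and my main task is to assemble them and dispatch one trivial but essential base observation.

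First I would invoke the hypothesis directly: because $\mc G$ satisfies the zero forcing property, the derived graph consists entirely of black nodes, i.e.\ $\mc D(\mc G)=V$. Consequently the auxiliary graph $\hat{\mc G}=(V,\mc D(\mc G),E)$ appearing in \cref{t10} has \emph{every} node damped, so that $\hat{\mc G}=(V,V,E)$.

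Next I would compute $\mc U(\hat{\mc G})$ straight from the definition of uncolorable nodes. In $\hat{\mc G}$ every node is a damped node, and condition (i) of the RBC problem forces every damped node to be black in any balanced colored graph. Hence the only balanced coloring of $\hat{\mc G}$ is the all-black one, in which every node is black; therefore every node is uncolorable and $\mc U(\hat{\mc G})=V$. (This all-black coloring is poorly balanced rather than richly balanced, consistent with $\hat{\mc G}$ admitting no richly balanced coloring.) Applying \cref{t10} then yields $\mc U(\mc G)=\mc U(\hat{\mc G})=V$, and by the equivalence (i) $\iff$ (iii) of \cref{t3} this is exactly the statement that $\mc G$ is PI-GAS.

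I do not expect any genuine obstacle, as this is an immediate corollary once \cref{t10} and \cref{t3} are in hand. The only point deserving a moment's care is the elementary observation that a damping graph all of whose nodes are damped has every node uncolorable; this rests entirely on the definitional requirement that damped nodes be colored black in every balanced coloring, and it is what converts the combinatorial conclusion $\mc D(\mc G)=V$ of the zero forcing algorithm into the spectral conclusion of parameter-independent stability.
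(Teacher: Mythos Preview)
Your proof is correct and follows essentially the same route as the paper: use the zero forcing hypothesis to obtain $\mc D(\mc G)=V$, hence $\hat{\mc G}=(V,V,E)$, observe that $\mc U(\hat{\mc G})=V$, apply \cref{t10} to get $\mc U(\mc G)=V$, and conclude PI-GAS via \cref{t3}. The paper's version is slightly terser (it leaves the computation of $\mc U(\hat{\mc G})$ and the invocation of \cref{t3} implicit), but the argument is identical.
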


\begin{proof} If $\mc G=(V,V_d,E)$ satisfies the zero forcing property, then $\mc D(\mc G)=V$ and thus $\hat{\mc G}=(V,V,E)$. From \cref{t10} it follows that $\mc U(\mc G) = \mc U(\mc{\hat G})=V$. 
\qed\end{proof}

The converse of \cref{eqle9} is not true. A simple counterexample is a chordless 6-cycle, consisting of 3 damped and 3 undamped nodes, which are arranged alternately. Obviously, it does not satisfy the zero forcing property and still it is PI-GAS. Indeed, regardless of the way we would color the undamped nodes black, blue or red, no  balanced colored graph contains a blue or red node. Interestingly, a similar 8-cycle allows oscillatory behavior. 

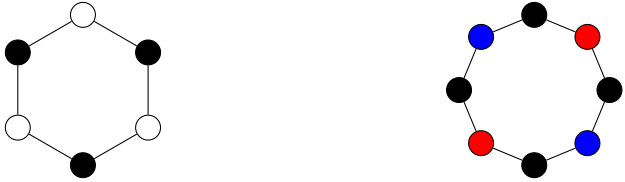
\begin{figure}[h!]
\begin{center}
\begin{tikzpicture}
[
b/.style={circle, draw=black,fill=black},
w/.style={circle, draw=black, fill=white},
bl/.style={circle, draw=black, fill=blue},
r/.style={circle, draw=black, fill=red}
]

\node[b] (1) at (0,0) {};
\node[bl] (2) at (.707,.293) {};
\node[b] (3) at (1,1) {};
\node[r] (4) at (.707,1.707) {};
\node[b] (5) at (0,2) {};
\node[bl] (6) at (-.707,1.707) {};
\node[b] (7) at (-1,1) {};
\node [r] (8) at (-.707,.293) {};

\draw (1)--(2)--(3)--(4)--(5)--(6)--(7)--(8)--(1);

\node[b] (9) at (-6,0) {};
\node[w] (10) at (-5.134,.5) {};
\node[b] (11) at (-5.134,1.5) {};
\node[w] (12) at (-6,2) {};
\node[b] (13) at (-6.866,1.5) {};
\node[w] (14) at (-6.866,.5) {};

\draw (9)--(10)--(11)--(12)--(13)--(14)--(9);

\end{tikzpicture}
\caption{The 6-cycle of alternating damped (black) and undamped (white) nodes is PI-GAS. On the other hand, the similar 8-cycle allows oscillatory behavior, as can be shown with the above richly balanced colored graph.}
\end{center}
\end{figure}

For the special case of tree graphs, the zero forcing property is also a necessary condition for $\mc G$ to be PI-GAS. Before we prove this, we need the following lemma:

\begin{lemma}\label{twoclasses} Consider damping graphs 
$\mc G_1=(V,V_{d},E_1)$ and  \\ $\mc G_2=(V,\mc U(\mc G_1),E_2)$, 
where $E_2$ is obtained from $E_1$ by deleting all or some edges in $\mc U(\mc G_1) \times \mc U(\mc G_1)$. Then $\mc U(\mc G_1) = \mc U(\mc G_2)$. 
\end{lemma}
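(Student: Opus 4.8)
The plan is to prove the two inclusions $\mc U(\mc G_1) \sbq \mc U(\mc G_2)$ and $\mc U(\mc G_2) \sbq \mc U(\mc G_1)$ separately. Write $U := \mc U(\mc G_1)$ for brevity, and recall the two facts I will lean on: by definition of uncolorability every node of $U$ is black in each balanced colored graph of $\mc G_1$, and the damped nodes are always black (condition (i)), whence in particular $V_d \sbq U$.

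The inclusion $U \sbq \mc U(\mc G_2)$ is immediate. In $\mc G_2$ the damped set is \emph{precisely} $U$, so condition (i) forces every node of $U$ to be black in each balanced colored graph of $\mc G_2$; hence every such node is uncolorable in $\mc G_2$, i.e. lies in $\mc U(\mc G_2)$.

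For the reverse inclusion $\mc U(\mc G_2) \sbq U$ I would argue contrapositively: I take an arbitrary colorable node $v$ of $\mc G_1$ and exhibit a balanced colored graph of $\mc G_2$ in which $v$ is non-black. Since $v \notin U$, there is a balanced colored graph $c$ of $\mc G_1$ with $c(v)$ non-black, and in $c$ every node of $U$ is black. I then transfer $c$ verbatim to $\mc G_2$. The crucial structural observation is that $\mc G_2$ differs from $\mc G_1$ only by deletion of edges lying in $U \times U$, and both endpoints of every such edge are black under $c$. Consequently, each node outside $U$ retains all of its incident edges (so its neighborhood colors are unchanged), while each node inside $U$ loses only black neighbors. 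Because the balance requirement (ii) — a black node has a blue neighbor if and only if it has a red neighbor — depends only on the \emph{presence or absence} of red and of blue neighbors, removing black neighbors can neither destroy nor create this property; and condition (i) still holds since all of $U$ is black under $c$. Thus $c$ is a balanced colored graph of $\mc G_2$ with $v$ non-black, so $v \notin \mc U(\mc G_2)$. Combining the two inclusions gives $\mc U(\mc G_2) = U = \mc U(\mc G_1)$.

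The only delicate point, and the step I would write out most carefully, is precisely this invariance of the balance type under deletion of black neighbors: I must confirm that a poorly balanced black node stays poorly balanced (its remaining neighbors are still all black), a richly balanced black node stays richly balanced (it keeps at least one red and at least one blue neighbor), and that no node can newly acquire the ``exactly one of the two colors'' pattern characterizing an unbalanced node. Everything else is routine bookkeeping with the definitions of balanced colored graph and of uncolorable node, together with the remark that $V_d \sbq U$ guarantees the damped constraint is respected throughout.
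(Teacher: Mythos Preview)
Your proof is correct and follows essentially the same approach as the paper: both prove the two inclusions separately, obtaining $\mc U(\mc G_1)\sbq \mc U(\mc G_2)$ from the fact that $\mc U(\mc G_1)$ is the damped set of $\mc G_2$, and the reverse inclusion by transferring a balanced coloring of $\mc G_1$ (in which all of $\mc U(\mc G_1)$ is black) to $\mc G_2$ and observing that deleting edges between black nodes preserves balance. Your final paragraph spelling out why poorly balanced and richly balanced black nodes retain their type under deletion of black neighbors is slightly more explicit than the paper's one-line ``removing edges between black nodes preserves the balance of black nodes,'' but the argument is the same.
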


\begin{proof} 
Since $\mc U(\mc G_1)$ is the set of damped nodes in $\mc G_2$, which are black in every balanced colored graph of $\mc G_2$, we have $\mc U(\mc G_1) \sbq \mc U(\mc G_2)$.  Now, consider a colorable node $i$ of $\mc G_1$ and a balanced colored graph of $\mc G_1$ in which $i$ is not black. Note that the uncolorable nodes of $\mc G_1$ are a subset of the black nodes in this colored graph and removing edges between black nodes preserves the balance of black nodes. Hence, by removing edges in $\mc U(\mc G_1) \times \mc U(\mc G_1)$ in a balanced colored graph of $\mc G_1$, the induced colored graph is still balanced. Therefore, $i$ is also colorable in $\mc G_2$. This shows that $\mc U(\mc G_2) \sbq \mc U(\mc G_1)$. 
\qed\end{proof}

\begin{proposition}\label{treeprop} Suppose that $\mc G$ is a tree graph. Then $\mc G$ satisfies the zero forcing property if and only if $\mc G$ is PI-GAS. 
\end{proposition}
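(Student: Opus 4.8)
The forward implication is already in hand: by \cref{eqle9}, the zero forcing property implies PI-GAS for an arbitrary graph, so only the converse requires the tree hypothesis. The plan is to prove the contrapositive, namely that if a tree $\mc G$ does not satisfy the zero forcing property then $\mc G$ is not PI-GAS, which by \cref{t3} is the same as producing a richly balanced coloring. First I would pass to the derived graph by setting $\hat{\mc G} := (V, \mc D(\mc G), E)$, so that \cref{t10} gives $\mc U(\hat{\mc G}) = \mc U(\mc G)$ and it suffices to exhibit a richly balanced coloring of $\hat{\mc G}$. The advantage of working in $\hat{\mc G}$ is that the zero forcing algorithm has \emph{stalled}: every black node of $\hat{\mc G}$ has either no white neighbor or at least two of them, and, since $\mc G$ is not zero forcing, the white set $W := V \setminus \mc D(\mc G)$ is nonempty. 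Because $\mc G$ is connected and $\mc D(\mc G) \supseteq V_d \neq \emptyset$, at least one black node has a white neighbor.

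Next I would recast the existence of a richly balanced coloring as a hypergraph two-coloring problem. We must color the nodes of $W$ red and blue so that no black node becomes unbalanced while at least one black node is richly balanced; since black nodes without white neighbors are automatically poorly balanced, this amounts to requiring that every black node with a white neighbor sees \emph{both} a red and a blue neighbor. Let $B'$ denote the set of such black nodes and associate to each $b \in B'$ the hyperedge $e_b := \mc N_b \cap W$, which by the stalled property satisfies $|e_b| \geq 2$. The goal then becomes exactly a proper two-coloring of the hypergraph $H = (W, \{ e_b : b \in B' \})$ in which no hyperedge is monochromatic; any such coloring makes every $b \in B'$ richly balanced and leaves all other black nodes poorly balanced, and $B' \neq \emptyset$ guarantees the presence of at least one richly balanced node.

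The crux, and the only place the tree hypothesis is used, is the following combinatorial lemma, which I would prove by induction on $|B'|$: \emph{if every hyperedge has at least two vertices and the bipartite incidence graph of $H$ is a forest, then $H$ admits a proper two-coloring}. Here the incidence graph of $H$ is precisely the bipartite subgraph of $\mc G$ consisting of the edges between $B'$ and $W$, hence a forest because $\mc G$ is acyclic. For the inductive step, since incidence edge-nodes have degree at least two, every leaf of the incidence forest is a white vertex belonging to a unique hyperedge; I would pick such a leaf $w_0$, lying in the single hyperedge $e_1$, delete $e_1$, and two-color the remaining hypergraph by the inductive hypothesis. If $e_1$ is then monochromatic, I recolor its private vertex $w_0$ with the opposite color; as $w_0$ lies in no other hyperedge this preserves the bichromaticity of all remaining hyperedges while making $e_1$ bichromatic.

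Finally, the proper two-coloring produced by the lemma is a richly balanced coloring of $\hat{\mc G}$, so $\mc U(\hat{\mc G}) \neq V$, whence $\mc U(\mc G) \neq V$ and $\mc G$ is not PI-GAS by \cref{t3}. The main obstacle is the two-coloring lemma above, and it is exactly there that acyclicity is indispensable, consistent with the failure of the converse for the $8$-cycle and with the NP-completeness of the general RBC problem. A secondary point to check carefully is that the stalled property of the derived graph genuinely forces $|e_b| \geq 2$ for every $b \in B'$, since a hyperedge of size one could never be made bichromatic; this is precisely why the reduction to $\hat{\mc G}$ through \cref{t10} is needed rather than arguing directly in $\mc G$.
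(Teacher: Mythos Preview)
Your proof is correct and takes a genuinely different route from the paper's. The paper also passes to $\hat{\mc G}$ via \cref{t10}, but then goes one step further to $\doublehat{\mc G}$ by deleting all edges between black nodes (invoking \cref{twoclasses}), so that each connected component is a tree whose leaves are all white; it then builds the coloring \emph{explicitly} by fixing a leaf $l_1$ and coloring the undamped nodes alternately blue and red along the unique paths from $l_1$ to every other leaf. Your approach instead abstracts the task to a hypergraph two-coloring lemma and proves it by a leaf-peeling induction on the incidence forest. What your formulation buys is cleanliness: black--black edges are irrelevant to your hypergraph, so you never need $\doublehat{\mc G}$ or \cref{twoclasses}, and the inductive fix-up of the private leaf $w_0$ sidesteps the consistency check that the paper's path-based construction implicitly requires when different paths from $l_1$ share undamped nodes. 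What the paper's approach buys is an explicit, algorithmic coloring that is reused verbatim later in the proof of \cref{correct} for the CNC algorithm. Both arguments hinge on exactly the same two facts---the stalled ZFA forces $|e_b|\geq 2$, and acyclicity of $\mc G$ makes the incidence structure a forest---so the tree hypothesis enters at the same point in each.
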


\begin{proof} Let $\mc G=(V, V_d, E)$ be a damping graph of a tree. We show that $\mc D(\mc G)=\mc U(\mc G)$. Consider $\hat{\mc G}=(V,\mc D(\mc G),E)$. Denote by $\doublehat{\mc G}$ the damping graph that is obtained by removing all edges between damped nodes in $\hat{\mc G}$. By \cref{t10} and \ref{twoclasses}, we have $\mc U(\mc G)=\mc U(\hat{\mc G})=\mc U(\doublehat{\mc G})$. In $\hat{\mc G}$, each damped node has either none or at least two undamped neigbors, so the ZFA terminates immediately. By removing the links between damped nodes in $\hat{\mc G}$, this property is preserved, so $\mc D(\doublehat{\mc G})=\mc D(\hat{\mc G})=\mc D(\mc G)$. It remains to show that $\mc D(\doublehat{\mc G})= \mc U(\doublehat{\mc G})$. Any connected component of $\doublehat{\mc G}$ is a tree whose leaves are undamped nodes. Consider arbitrarily two leaves $l_1,l_2$ on such a connected component. On the path  $l_1 \to l_2$, there are no two consecutive damped nodes. Color the undamped nodes alternately blue and red. Then for any other leaf $l_i$ of the corresponding connected component in $\doublehat{\mc G}$, consider $l_1 \to l_i$ and observe that only the last part of this path has not been colored. Hence we can color the undamped nodes on this last part in such a way that the undamped nodes of the path $l_1 \to l_i$ are colored alternately blue and red. Repeat this until all undamped nodes on paths from $l_1$ to leaves in the same connected component have been colored. As $\doublehat{\mc G}$ is a tree, these paths cover all nodes. By also coloring the damped nodes black, we obtain a balanced colored graph in which all damped nodes are richly balanced and the undamped nodes are red or blue and therefore colorable. Thus, the white nodes in the derived graph of $\doublehat{\mc G}$, which are the undamped nodes in this graph, are colorable. Consequently, $ V \backslash \mc D(\doublehat{\mc G}) \sbq V \backslash \mc U(\doublehat{\mc G})$. From the proof of \cref{t10}, $\mc D(\doublehat{\mc G}) \sbq \mc U(\doublehat{\mc G})$ and hence $\mc D(\doublehat{\mc G})= \mc U(\doublehat{\mc G})$. We conclude that $\mc D(\mc G)=\mc U(\mc G)$, so $\mc D(\mc G)=V$  if and only if $\mc U(\mc G)=V$. 
\qed\end{proof}

\su{Chord node coloring}

The fact that the RBC problem is NP-complete, does not imply that the existence of a richly balanced colored graph can only be determined by a brute-force approach that involves all undamped nodes. For (large) graphs with a low number of fundamental cycles, we can significantly reduce the search space. In this subsection, we present the \text{chord node coloring (CNC) algorithm} that solves the RBC problem in such a way that the number of variables used in the brute-force search is proportional to the fundamental cycles in the graph. The CNC algorithm only runs through combinations of colors of the \textit{chord nodes}: undamped nodes that are the endpoints of the chords of the graph. It can be verified in quadratic time if for a given coloring of chord nodes there exists a richly balanced colored graph. Notice that the number of chord nodes is not more than twice the number of fundamental cycles. 

Given the damping graph $\mc G$, the algorithm starts with a given coloring of the chord nodes of the reduced graph $\doublehat{\mc G}$, as defined in the proof of \cref{treeprop}, in the original colors red, blue and black. All damped nodes are black, while other nodes are white to indicate that its coloring in one of the original colors is yet to be determined. White nodes are recolored according to the forcing rules:

\smallskip

\bi
\item \textit{black forcing rule:} If there is a black node $b$ that has precisely one white neighbor $w$ and no red or blue neighbors, color $w$ black. $b$ is called a \textit{(black-) forcing node}. 
\item \textit{color forcing rule:} If there is a black node $b$ that has precisely one white neighbor $w$, at least one red (blue) neighbor  and no blue (red) neighbors, color $w$ blue (red). $b$ is called a \textit{(color-)forcing node}. 
\ei

\smallskip

The forcing rules are necessary conditions for creating a balanced colored graph, in the sense that coloring the white node otherwise would immediately result in an unbalanced node, which, in this context, is a node that does not have white neighbors and either red or blue neighbors but not both. In the sequel, the graph that is obtained after repeatedly applying the forcing rules (in arbitrary order) is referred to as the derived graph. Notice that the number of times the forcing rules need to be applied to obtain the derived graph is at most $n$, where in each iteration, finding a forcing node and coloring the white node can be done in linear time. Thus, the time needed to find the derived graph for a single chord node coloring can be done in polynomial (quadratic) time. In \cref{correct}, we show that the original graph is PI-GAS if and only if there exists a chord node coloring for which the derived graph of one of the connected components of $\doublehat{\mc G}$ does not contain unbalanced black nodes and is not completely black. The chord node coloring algorithm is summarized as follows: 

\newpage

\begin{algorithm}
\caption{Chord node coloring (CNC) algorithm}
\begin{algorithmic}
\STATE{Input: the damping graph $\mc G=(V,V_d,E)$}
\STATE{Create the possibly disconnected colored graph $\doublehat{\mc G}=(V,\mc D(\mc G),\doublehat E)$, where $\doublehat E$ is obtained from $E$ by deleting all edges in $\mc D(\mc G) \times \mc D(\mc G)$}
\STATE{Color the nodes in $\mc D(\mc G)$ black}
\FOR{each connected component $\doublehat{\mc G}_i=(V_i, V_i \cap \mc D(\mc G), \doublehat E \cap (V_i \times V_i))$ of $\doublehat{\mc G}$}
\STATE{Find a spanning tree of $\doublehat{\mc G}_i$}
\STATE{Let $V_i^c$ be the set of undamped nodes in $V_i \backslash \mc D(\mc G)$ that are the endpoints of the chords of the spanning tree}
\FOR{each color combination in $\{$black, blue, red$\}^{|V_i^c|}$}
\STATE{Color the nodes in $V_i^c$ according to this combination of colors}
\STATE{Color the nodes in $V_i \backslash \haak{\mc D(\mc G) \cup V_i^c}$ white}
\STATE{Apply repeatedly the black and color forcing rules to $\doublehat{\mc G}_i$, until no forcing nodes are left}
\STATE{If the derived graph of $\doublehat{\mc G}_i$ does not contain unbalanced black nodes and not all nodes are black, terminate and return: ``$\mc G$ is not PI-GAS"}
\ENDFOR
\ENDFOR
\STATE{Return: ``$\mc G$ is PI-GAS"}
\end{algorithmic}
\end{algorithm}

Let us verify the algorithm:

\begin{theorem}\label{correct} The RBC problem is solved correctly by the CNC algorithm. 
\end{theorem}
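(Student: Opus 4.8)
The plan is to deduce correctness from the combinatorial characterization of PI-GAS together with a verification that the forcing rules explore exactly the balanced colorings of the reduced graph. By \cref{t3}, $\mc G$ is not PI-GAS if and only if it admits a richly balanced coloring, so it suffices to show that the CNC algorithm returns ``$\mc G$ is not PI-GAS'' precisely when such a coloring exists. First I would reduce from $\mc G$ to $\doublehat{\mc G}$: exactly as in the proof of \cref{treeprop}, \cref{t10} and \cref{twoclasses} yield $\mc U(\mc G)=\mc U(\doublehat{\mc G})$, so by \cref{t3} the graph $\mc G$ admits a richly balanced coloring iff $\doublehat{\mc G}$ does. Since all edges inside $\mc D(\mc G)$ have been deleted, the components $\doublehat{\mc G}_i$ can be colored independently; each contains at least one black node (a purely undamped component would disconnect the connected graph $\hat{\mc G}$), and the all-black coloring of a component has no unbalanced black node. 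Hence $\doublehat{\mc G}$ has a richly balanced coloring iff some single component $\doublehat{\mc G}_i$ admits a coloring with no unbalanced black node that is not entirely black; by connectivity of the component, any such coloring necessarily contains a richly balanced black node, which is exactly the inner termination test of the algorithm.

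Next, fixing a component $\doublehat{\mc G}_i$ and a coloring of its chord nodes $V_i^c$, I would prove that the forcing rules are sound and complete for extending this chord coloring to a balanced coloring of $\doublehat{\mc G}_i$. For soundness, each forcing step assigns to a white node the unique color that keeps its forcing black neighbour from becoming unbalanced, so every balanced coloring extending the fixed chord colors must agree with the resulting derived (partial) coloring; I would also check that the derived graph is independent of the order in which the rules are applied, since forced colors are never retracted and two simultaneously applicable rules can never demand different colors for the same node. In particular, if forcing produces a committed unbalanced black node, no balanced extension exists, so rejecting this chord coloring is correct.

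For completeness I would show that when the derived graph has no committed unbalanced black node and is not all black, the remaining white nodes can be completed to a full richly balanced coloring. Once the chord-node colors are fixed, the only still-undetermined adjacencies lie in the spanning tree, so the completion reduces to a tree computation: each black node that is not already richly balanced retains at least two leftover white neighbours --- an inherited consequence of the immediate termination of the ZFA on $\doublehat{\mc G}$ --- and can therefore be balanced by an alternating red/blue assignment along the tree, exactly as in the construction in the proof of \cref{treeprop}. Combining the two directions, ranging over all $3^{|V_i^c|}$ chord colorings is exhaustive: any balanced coloring of $\doublehat{\mc G}_i$ restricts to one of them and, by soundness, refines the corresponding derived graph (so that derived graph inherits the absence of committed unbalanced black nodes and cannot be all black), while any successful derived graph yields an actual balanced coloring by the completion. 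Chaining this with the component decomposition and the $\doublehat{\mc G}$-reduction shows that the algorithm reports ``not PI-GAS'' iff $\mc G$ admits a richly balanced coloring, which by \cref{t3} is the claim. The main obstacle is the completeness step: rigorously establishing that the leftover white nodes can always be colored so that every remaining black node becomes richly balanced, i.e.\ generalizing the tree argument of \cref{treeprop} to the cyclic graph by using the fixed chord-node colors to decouple the fundamental cycles.
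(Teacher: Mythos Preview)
Your proposal is correct and follows essentially the same route as the paper: reduce to $\doublehat{\mc G}$ via $\mc U(\mc G)=\mc U(\doublehat{\mc G})$, treat components independently, argue soundness of the forcing rules, and complete the derived graph to a richly balanced coloring by a tree argument in the spirit of \cref{treeprop}. The one place where the paper is more explicit is the completeness step you flag as the main obstacle: the paper obtains the needed forest not by working in the spanning tree directly but by deleting from the derived graph all red and blue nodes (with their incident edges), all black--black edges, and all edges joining already richly balanced black nodes to white nodes; since every chord endpoint is colored, all chords disappear and the residual graph is a forest in which each remaining black node has either zero or at least two white neighbours (this last fact comes from the termination of the CNC forcing rules, not from the ZFA on $\doublehat{\mc G}$ as you wrote), whereupon the alternating red/blue procedure of \cref{treeprop} applies verbatim.
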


\begin{proof}  For this proof, we categorize black nodes as follows\footnote{Due to the symmetry regarding blue and red nodes, the defining conditions for a node to be of the type described in the tabular, must be extended with the same conditions where the results for red and blue nodes are interchanged. Also, note that this tabular is in accordance with the balancing definitions for colored graphs.}:

\begin{tabular}{l| lllll}
number of neighbors of the color $\rightarrow$  &&&& \\
black node type $\downarrow$  & black & white & blue & red \\\hline
poorly balanced & 0+ & 0 & 0 & 0 \\
richly balanced & 0+ & 0+ & 1+ & 1+ \\
unbalanced & 0+ & 0 & 1+ & 0 \\
poorly indefinite & 0+ & 2+ & 0 & 0 \\
richly indefinite & 0+ & 2+ & 1+ & 0 \\
black-forcing & 0+ & 1 & 0 & 0 \\
color-forcing & 0+ & 1 & 1+ & 0 \\
\end{tabular}

Suppose first that the algorithm claims that $\mc G$ is PI-GAS. Then for any connected component $\doublehat{\mc G}_i$ and assignment of colors to $V^c_i$, applying the two forcing rules lead to unbalanced black nodes or all nodes being black. In both cases, there does not exist a richly balanced colored graph with the given chord node coloring. Since this holds for all components and color assignments to $V^c_i$, the only balanced colored graph of $\doublehat{\mc G}$ consist entirely of black nodes, meaning that $\mc U(\doublehat{\mc G})=V$. Recall from the proof of \cref{treeprop} that $\mc U(\doublehat{\mc G}) =\mc U(\mc G)$. Hence, $\mc U(\mc G)=V$, i.e. there does not exist a richly balanced colored graph of $\mc G$. 

Now suppose that the algorithm claims that $\mc G$ is not PI-GAS. Consider the derived graph of the connected component $\doublehat{\mc G}_i$, which does not contain unbalanced black nodes and is not completely black. 
We remove the following elements of $\doublehat{\mc G}_i$:
\smallskip
\begin{enumerate}
\item All blue and red nodes and their incident edges
\item All edges between black nodes
\item All edges between richly balanced black nodes and white nodes
\end{enumerate}
\smallskip
Doing this creates a forest, referred to as $\breve{\mc G}_i$. Indeed, each connected component of $\breve{\mc G}_i$ must be a tree since all chords are removed and thus, no fundamental cycles are left. Note furthermore that every balanced black node in $\doublehat{\mc G}_i$ is an isolated node in $\breve{\mc G}_i$, whereas every indefinite black node in $\doublehat{\mc G}_i$ is poorly indefinite in $\breve{\mc G}_i$. Since furthermore, $\doublehat{\mc G}_i$ does not contain forcing nodes, every black node in $\breve{\mc G}_i$ contains none or at least two white neighbors. Consider one of the connected components $(\breve{\mc G}_i)_j$. 
Its white nodes can be colored according to the procedure in the proof of \cref{treeprop}. By reconnecting all removed edges and nodes, all black nodes in $\doublehat{\mc G}_i$ are balanced: previously richly balanced black nodes in $\doublehat{\mc G}_i$ remain richly balanced and the same holds for poorly balanced nodes. However, previously indefinite black nodes in $\doublehat{\mc G}_i$ become richly balanced, since they were rendered richly balanced in $\breve{\mc G}_i$ by coloring at least one white neighbor blue and another one red. Thus, the colored graph of $\doublehat{\mc G}_i$ is richly balanced and we can render $\doublehat{\mc G}$ richly balanced by coloring the nodes of other connected components $\doublehat{\mc G}_j$ black.  Since the nodes in $\mc D(\mc G)$ are black and the graph structures of $\mc G$ and $\doublehat{\mc G}$ are equal except for the edges in $\mc D(\mc G) \times \mc D(\mc G)$, the same assignment of colors can be used to the nodes of $\mc G$ to obtain a richly balanced graph of $\mc G$.  
\qed\end{proof}

{\bf Remark } The number of chord nodes depends on the chosen spanning tree. In order to reduce the completion time of the algorithm, we want to find a selection of chord nodes with a minimum number of undamped nodes, e.g. by avoiding chords that have two undamped endpoints and selecting as many chords as possible that share an undamped node.

 \V

{\bf Example } An example of a damping graph that is not PI-GAS is given in the figure below. 

\begin{figure}[h!]
\begin{center}
\begin{tikzpicture}
[
b/.style={circle, draw=black,fill=black},
w/.style={circle, draw=black, fill=white},
bl/.style={circle, draw=black, fill=blue},
r/.style={circle, draw=black, fill=red},
vw/.style={rectangle, draw=black, fill=white},
vwb/.style={rectangle, draw=black, fill=black},
vwr/.style={rectangle, draw=black, fill=red},
]

\node [w] (1) at (1.991730,4.397093) {};
\node [w] (2) at (3.058293,4.890728) {};
\node [b] (3) at (4.094515,4.414921) {};
\node [w] (4) at (4.871306,3.222695) {};
\node [b] (5) at (4.890551,2.069705) {};
\node [w] (6) at (4.104763,1.259848) {};
\node [w] (7) at (3.127251,0.650625) {};
\node [w] (8) at (1.626551,0.548915) {};
\node [b] (9) at (1.078613,3.637908) {};
\node [w] (10) at (1.403538,2.718135) {};
\node [w] (11) at (1.442377,1.727159) {};
\node [b] (12) at (2.326330,1.275892) {};
\node [w] (13) at (2.308036,2.243963) {};
\node [w] (14) at (3.229290,1.885292) {};
\node [b] (15) at (4.014391,2.486022) {};
\node [w] (16) at (3.882363,3.466819) {};
\node [b] (17) at (2.989042,3.888577) {};
\node [w] (18) at (2.178578,3.327839) {};
\node [b] (19) at (3.063728,2.870270) {};
\node [b] (20) at (0.639040,1.127930) {};
\node [w] (21) at (0.479714,2.289665) {};

\node [vwr] (1a) at (7.991730,4.397093) {};
\node [r] (2a) at (9.058293,4.890728) {};
\node [b] (3a) at (10.094515,4.414921) {};
\node [bl] (4a) at (10.871306,3.222695) {};
\node [b] (5a) at (10.890551,2.069705) {};
\node [r] (6a) at (10.104763,1.259848) {};
\node [bl] (7a) at (9.127251,0.650625) {};
\node [r] (8a) at (7.626551,0.548915) {};
\node [b] (9a) at (7.078613,3.637908) {};
\node [b] (10a) at (7.403538,2.718135) {};
\node [b] (11a) at (7.442377,1.727159) {};
\node [b] (12a) at (8.326330,1.275892) {};
\node [b] (13a) at (8.308036,2.243963) {};
\node [vwb] (14a) at (9.229290,1.885292) {};
\node [b] (15a) at (10.014391,2.486022) {};
\node [vwb] (16a) at (9.882363,3.466819) {};
\node [b] (17a) at (8.989042,3.888577) {};
\node [b] (18a) at (8.178578,3.327839) {};
\node [b] (19a) at (9.063728,2.870270) {};
\node [b] (20a) at (6.639040,1.127930) {};
\node [bl] (21a) at (6.479714,2.289665) {};

\draw (1)--(2)--(3)--(4)--(5)--(6)--(7)--(8)--(20);
\draw (20)--(21);
\draw (21)--(9);
\draw (9)--(1);
\draw (9)--(10)--(11)--(12)--(13)--(14)--(15)--(16);
\draw (16)--(17);
\draw (17)--(18)--(19);
\draw (19)--(14);
\draw (10)--(18);
\draw (3)--(16);
\draw (5)--(15);

\draw (1a)--(2a);
\draw[dashed] (2a)--(3a)--(4a)--(5a)--(6a)--(7a);
\draw (7a)--(8a);
\draw [thick,red,<-] (8a)--(20a);
\draw (20a)--(21a);
\draw [blue,thick,<-] (21a)--(9a);
\draw[dotted,thick] (9a)--(1a);
\draw (9a)--(10a);
\draw[thick,->] (10a)--(11a);
\draw (11a)--(12a)--(13a);
\draw[thick,<-] (13a)--(14a);
\draw (14a)--(15a)--(16a);
\draw[dotted,thick]  (16a)--(17a);
\draw[thick,->] (17a)--(18a);
\draw (18a)--(19a);
\draw[dotted,thick] (19a)--(14a);
\draw[thick,<-] (10a)--(18a);
\draw[dotted,thick] (3a)--(16a);

\end{tikzpicture}
\caption{A damping graph that is shown to admit a richly balanced coloring. By \cref{t3}, it is not PI-GAS.}
\end{center}
\end{figure}
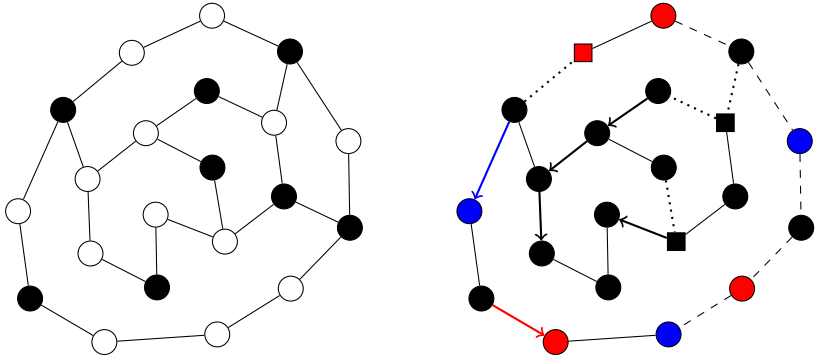

The figure on the left shows a damping graph $\mc G$ where black and white nodes represent damped and undamped nodes, respectively. The chord node algorithm first applies the ZFA, which terminates immediately and then it deletes the single edge between two black nodes. The next step is selecting the chords, which are depicted by the dotted edges in the figure on the right. The undamped chord nodes are depicted as squares. For the illustrated combination of red, black and black chord nodes, the two forcing rules are applied (depicted by arrows). This does not lead to unbalanced nodes. The nodes that are still white at this step are connected to each other through the dashed edges. The procedure described in the proof of \cref{treeprop} describes how to color them red or blue. The result is the richly balanced colored graph on the right.

Let us design the system parameter values in such a way that the velocities of the oscillating nodes are equal. Let $r=1$ and consider the vector $v \in \R^n$ such that $v_i =-1,0,1$ for blue, black and red nodes, respectively. The system parameter values can now be obtained from the proof of \cref{o}. This leads to an invariant set $\mc S^{LS}$ of dimension 2, which accounts for one oscillating group having one dimension of freedom for momenta and one for spring elongations. Therefore, any initial condition will lead to a partial synchronization of the groups of red, blue and black nodes (see Figure 4). The oscillating group has an angular frequency of 1, as can be verified by analyzing the eigenvalues of the matrix $\breve A$ (see the remark under \cref{nwg}). A simulation of the same network with perturbed parameter values that render the system GAS is shown in Figure 5.\footnote{With probability 1, for uniformly distributed randomly chosen parameter values, the matrix $M^{-1}\mathds L$ in \eqref{iii} has distinct eigenvalues with 1-dimensional eigenspaces that are not contained in the subspace $\bp 0 \; I_{n_ur} \ep^T$. Therefore, perturbations of parameter values typically render the system GAS.}

\begin{figure}[h!]
\includegraphics[width=12cm]{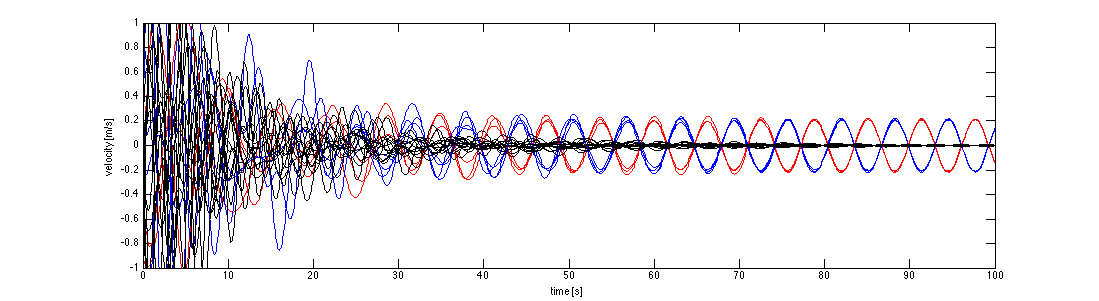}
\caption{Velocity of the nodes of the graph in Figure 3 with system parameters designed to oscillate with equal velocities. Black, red and blue lines correspond to black, red and blue nodes. The red and blue nodes form one oscillating group.}
\end{figure}

\vspace{-5mm}
\begin{figure}[h!]
\includegraphics[width=12cm]{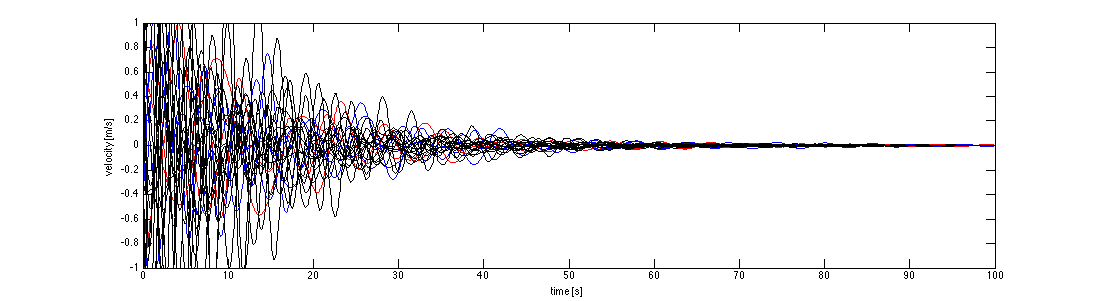}
\caption{Velocities of the nodes of the same network, with perturbed system parameters that render the system GAS.}
\end{figure}

\vspace{-5mm}

By placing dampers at strategic locations in the graph, the system becomes PI-GAS. This occurs for example if the uppermost undamped node is changed into a damped node, see Figure 6. This node forces his white neighbor to black (depicted by the arrow). After removal of the edges between black nodes, we find one major connected component with white nodes. By selecting the chords of Figure 5 within the set of remaining edges (dotted edges), two undamped chord nodes appear (yellow squares). If one of them is colored black, the black forcing rule colors the whole component black. Therefore, the only remaining option is to color the chord nodes red and blue. The color forcing rule will color the node depicted by the yellow circle red or blue, but in either case, an unbalanced black node will appear. By the CNC algorithm, the system is PI-GAS. 

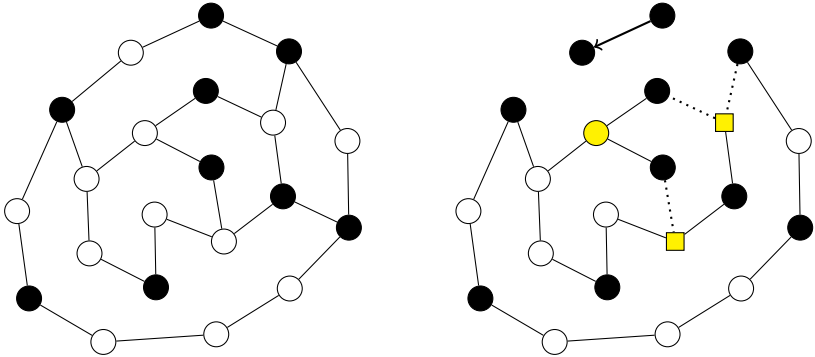
\begin{figure}[h!]
\begin{center}
\begin{tikzpicture}
[
b/.style={circle, draw=black,fill=black},
w/.style={circle, draw=black, fill=white},
bl/.style={circle, draw=black, fill=blue},
r/.style={circle, draw=black, fill=red},
yy/.style={circle, draw=black, fill=yellow},
vw/.style={rectangle, draw=black, fill=white},
vwb/.style={rectangle, draw=black, fill=black},
vwr/.style={rectangle, draw=black, fill=red},
vwy/.style={rectangle, draw=black, fill=yellow},
]

\node [w] (1) at (1.991730,4.397093) {};
\node [b] (2) at (3.058293,4.890728) {};
\node [b] (3) at (4.094515,4.414921) {};
\node [w] (4) at (4.871306,3.222695) {};
\node [b] (5) at (4.890551,2.069705) {};
\node [w] (6) at (4.104763,1.259848) {};
\node [w] (7) at (3.127251,0.650625) {};
\node [w] (8) at (1.626551,0.548915) {};
\node [b] (9) at (1.078613,3.637908) {};
\node [w] (10) at (1.403538,2.718135) {};
\node [w] (11) at (1.442377,1.727159) {};
\node [b] (12) at (2.326330,1.275892) {};
\node [w] (13) at (2.308036,2.243963) {};
\node [w] (14) at (3.229290,1.885292) {};
\node [b] (15) at (4.014391,2.486022) {};
\node [w] (16) at (3.882363,3.466819) {};
\node [b] (17) at (2.989042,3.888577) {};
\node [w] (18) at (2.178578,3.327839) {};
\node [b] (19) at (3.063728,2.870270) {};
\node [b] (20) at (0.639040,1.127930) {};
\node [w] (21) at (0.479714,2.289665) {};

\node [b] (1a) at (7.991730,4.397093) {};
\node [b] (2a) at (9.058293,4.890728) {};
\node [b] (3a) at (10.094515,4.414921) {};
\node [w] (4a) at (10.871306,3.222695) {};
\node [b] (5a) at (10.890551,2.069705) {};
\node [w] (6a) at (10.104763,1.259848) {};
\node [w] (7a) at (9.127251,0.650625) {};
\node [w] (8a) at (7.626551,0.548915) {};
\node [b] (9a) at (7.078613,3.637908) {};
\node [w] (10a) at (7.403538,2.718135) {};
\node [w] (11a) at (7.442377,1.727159) {};
\node [b] (12a) at (8.326330,1.275892) {};
\node [w] (13a) at (8.308036,2.243963) {};
\node [vwy] (14a) at (9.229290,1.885292) {};
\node [b] (15a) at (10.014391,2.486022) {};
\node [vwy] (16a) at (9.882363,3.466819) {};
\node [b] (17a) at (8.989042,3.888577) {};
\node [yy] (18a) at (8.178578,3.327839) {};
\node [b] (19a) at (9.063728,2.870270) {};
\node [b] (20a) at (6.639040,1.127930) {};
\node [w] (21a) at (6.479714,2.289665) {};

\draw (1)--(2)--(3)--(4)--(5)--(6)--(7)--(8)--(20);
\draw (20)--(21);
\draw (21)--(9);
\draw (9)--(1);
\draw (9)--(10)--(11)--(12)--(13)--(14)--(15)--(16);
\draw (16)--(17);
\draw (17)--(18)--(19);
\draw (19)--(14);
\draw (10)--(18);
\draw (3)--(16);
\draw (5)--(15);

\draw[thick, <-] (1a)--(2a);
\draw (3a)--(4a)--(5a)--(6a)--(7a);
\draw (7a)--(8a);
\draw  (8a)--(20a);
\draw (20a)--(21a);
\draw (21a)--(9a);
\draw (9a)--(10a);
\draw (10a)--(11a);
\draw (11a)--(12a)--(13a);
\draw (13a)--(14a);
\draw (14a)--(15a)--(16a);
\draw[dotted,thick] (16a)--(17a);
\draw (17a)--(18a);
\draw (18a)--(19a);
\draw[dotted,thick] (19a)--(14a);
\draw (10a)--(18a);
\draw[dotted,thick] (3a)--(16a);

\end{tikzpicture}
\caption{The network of Figure 5, where the uppermost undamped node is changed into a damped node. The CNC algorithm certifies that this network is PI-GAS.}
\end{center}
\end{figure}

\section{Conclusion}

Asymptotic output consensus of a mass-spring-damper system with damped and undamped nodes amounts to global asymptotic stability (GAS) of a linearly shifted system.  For a given set of system parameters, the consensus problem is equivalent to an eigenspace problem that  depends on the graph topology and the edge weights, as well as on the mass and resistance matrices of the undamped nodes (\cref{t1}). This result is taken as starting point for the parameter-independent output consensus problem, which asks if all systems with the same underlying graph and set of damped nodes are GAS. We showed that this problem is equivalent to an NP-complete graph coloring problem called the richly balanced coloring problem. The zero forcing property is a sufficient condition for the system to be PI-GAS and thus, to guarantee output consensus for all system parameter values. For tree graphs, it is also a necessary condition. The search space of the brute-force approach to decide if the system is PI-GAS can be confined to the undamped nodes that are endpoints of a set of edges that form the chords of the graph. This allows one to decide fast whether a large network with a few fundamental cycles guarantees output consensus. The results obtained in this paper open the way for an analysis of qualitatively heterogeneous networks. It can be expected to influence also structural controllability analysis and pinning control, focusing on the problem of where to allocate a minimum amount of dampers in order to render the system PI-GAS.

\nocite{*}
\bibliographystyle{siamplain}
\bibliography{references}

\begin{thebibliography}{10}

\bibitem{arcak}
{\sc M.~Arcak}, {\em Passivity as a design tool for group coordination}, IEEE
  Trans. Autom. Control, 52 (2007), pp.~1380--1390.

\bibitem{c2}
{\sc M.~B{\"u}rger, D.~Zelazo, and F.~Allg{\"o}wer}, {\em Hierarchical
  clustering of dynamical networks using a saddle-point analysis}, IEEE Trans.
  Autom. Control, 58 (2013), pp.~113--124.

\bibitem{c1}
{\sc M.~B{\"u}rger, D.~Zelazo, and F.~Allg{\"o}wer}, {\em Duality and network
  theory in passivity-based cooperative control}, Automatica, 50 (2014),
  pp.~2051--2061.

\bibitem{chen}
{\sc G.~Chen}, {\em Pinning control and synchronization on complex dynamical
  networks}, Int. J. Control Autom. Syst., 12
  (2014), pp.~221--230.

\bibitem{kron}
{\sc F.~D{\"o}rfler and F.~Bullo}, {\em Kron reduction of graphs with
  applications to electrical networks}, IEEE Trans. Circuits Syst. I: Reg. Pap., 60 (2013), pp.~150 -- 163.

\bibitem{water}
{\sc F.~D{\"o}tsch, J.~Denzinger, H.~Kasinger, and B.~Bauer}, {\em
  Decentralized real-time control of water distribution networks using self-
  organizing multi-agent systems}, 4th IEEE Intern. Conf. on 
  Self-Adaptive and Self-Organizing Syst., (2010), pp.~223--232.

\bibitem{grip}
{\sc H.~Grip, T.~Yang, A.~Saberi, and A.~Stoorvogel}, {\em Output
  synchronization for heterogeneous networks of non-introspective,
  non-right-invertable agents}, Proc. IEEE Amer. Contr. Conf.,  (2013),
  pp.~5791--5796.

\bibitem{zfs2}
{\sc AMRSGW~Group}, {\em Zero forcing sets and the minimum rank of
  graphs}, Linear Algebra Appl., 428 (2008), pp.~1628--1648.

\bibitem{hautus} 
{\sc M.~Hautus}, {\em Controllability and observability
conditions of linear autonomous systems}, Proc. Koninklijke
Nederlandse Academie voor Wetenschappen, series A, 1969.

\bibitem{traffic}
{\sc D.~Helbing}, {\em Traffic and related self-driven many-particle systems},
  Rev. Mod. Phys., 73 (2001), pp.~1067--1141.

\bibitem{bergenhill}
{\sc D.~Hill and A.~Bergen}, {\em Stability analysis of multimachine power
  networks with linear frequency dependent loads}, IEEE Trans. Circuits Syst., 29 (1982), pp.~840--848.

\bibitem{ssc1}
{\sc J.~C. Jarczyk, F.~Svaricek, and B.~Alt}, {\em Strong structural
  controllability of linear systems revisited}, Proc. IEEE Conf. Dec. Control
  and Proc. IEEE Eur. Contr. Conf.,  (2011), pp.~1213--1218.
  
\bibitem{acc}
{\sc F.~Koerts, M.~B{\"u}rger, A.~van~der Schaft, and C.~D. Persis}, {\em
  Stability analysis of networked systems containing damped and undamped
  nodes}, IEEE Amer. Contr. Conf.,  (2016).
  
\bibitem{xiang}
 {\sc X.~Li, X.~Wang, G.~Cheng}, {\em Pinning a Complex Dynamical Network to its Equilibrium}, IEEE Trans. Circuits Syst. I: Reg. Pap., 51 (2004), pp.~2074--2087. 

\bibitem{ssc2}
{\sc N.~Monshizadeh, K.~Camlibel, and H.~Trentelman}, {\em Strong targeted
  controllability of dynamical networks}, IEEE Conf. Dec. Contr.,
  (2015), pp.~4782--4787.
  
  \bibitem{zfs4}
  {\sc N.~Monshizadeh, S.~Zhang, K.~Camlibel}, {\em Zero Forcing Sets and Controllability of Dynamical Systems Defined on Graphs}, IEEE Trans. Autom. Control, 59 (2014), pp.~2562--2567.

\bibitem{murguia}
{\sc C.~Murguia, R.~Fey, and H.~Nijmeijer}, {\em Partial network
  synchronization and diffusive dynamic couplings}, Proc. 19th
  IFAC World Congress, 19 (2014).

\bibitem{olfati}
{\sc R.~Olfati-Saber, A.~Fax, and R.~Murray}, {\em Consensus and cooperation in
  networked multi-agent systems}, Proc. IEEE, 95 (2007),
  pp.~215--233.

\bibitem{observability}
{\sc G.~Parlangeli and G.~Notarstefano}, {\em On the reachability and
  observability of path and cycle graphs}, IEEE Trans. Autom.
  Control, 57 (2012), pp.~743--748.

\bibitem{graphmatrices}
{\sc S.~Pirzada}, {\em An introduction to graph theory}, University Press
  Hyderabad, 2012.

\bibitem{polderman}
{\sc J.~W. Polderman and J.~C. Willems}, {\em Introduction to the Mathematical
  Theory of System and Control: A Behavioral Approach}, Springer-Verlag, New
  York, 1998.

\bibitem{rahmani}
{\sc A.~Rahmani, M.~Ji, M.~Mesbahi and M.~Egerstedt}, {\em Controllability of
  multi-agent systems from a graph-theoretic perspective}, SIAM J. Control
  Optim., 48 (2009), pp.~162--186.

\bibitem{zfs3}
{\sc D.~D. Row}, {\em Zero Forcing Number: Results for Computation and
  Comparison with Other Graph Parameters}, PhD thesis, Iowa State University,
  2011.

\bibitem{sabidussi}
{\sc G.~Sabidussi}, {\em Graph multiplication}, Math. Z., 72 (1960),
  pp.~446--457.

\bibitem{seyboth}
{\sc G.~Seyboth, D.~Dimarogonas, K.~Johansson, and P.~Frasca}, {\em On robust
  synchronization of heterogeneous linear multi-agent systems with static
  couplings}, Automatica, 53 (2015), pp.~392--399.

\bibitem{zfs1}
{\sc M.~Trefois and J.~Delvenne}, {\em Zero forcing number, constrained
  matchings and strong structural controllability}, Linear Algebra Appl., 484 (2015), pp.~199--218.

\bibitem{trip}
{\sc S.~Trip, M.~B\"{u}rger, and C.~D. Persis}, {\em An internal model approach
  to frequency regulation in inverter-based microgrids with time-varying
  voltages}, Proc. IEEE Conf. Dec. Contr.,  (2014).

\bibitem{port}
{\sc A.~J. van~der Schaft and D.~Jeltsema}, {\em Port-hamiltonian systems theory:
  An introductory overview}, Found. Trends Syst. Contr., 1
  (2014), pp.~173--378.

\bibitem{port2}
{\sc A.~J. van~der Schaft and B.~M. Maschke}, {\em Port-hamiltonian systems on
  graphs}, SIAM J. Control Optim., 51 (2013), pp.~906--937.
  
  \bibitem{vukic}
{\sc Z.~Vukic}, {\em Nonlinear Control Systems}, CRC Press, 2003.

\bibitem{zhao}
{\sc J.~Zhao, D.~Hill, and T.~Liu}, {\em Synchronization of dynamical networks
  with nonidentical nodes: Criteria and control}, IEEE Trans. Circuits Syst. I: Reg. Pap., 58 (2011), pp.~584--594.
  

  
\end{thebibliography}

\end{document}